\documentclass[letterpaper,11pt]{amsart}

\pdfoutput=1
\usepackage{tikz, tikz-cd}
\usepackage{bbm}
\usetikzlibrary{arrows}
\usepackage[font=footnotesize,labelfont=bf]{caption}
\usepackage{blkarray}
\usepackage{enumitem}
\usepackage{hyperref}
\usepackage{latexsym,array,delarray,epsfig,setspace,cleveref, mathtools,amssymb,mathrsfs,bbold}
\usepackage{subfig}

\newtheorem{theorem}{Theorem}
\numberwithin{theorem}{section}
\newtheorem{proposition}[theorem]{Proposition}
\newtheorem{corollary}[theorem]{Corollary}
\newtheorem{lemma}[theorem]{Lemma}

\theoremstyle{definition}
\newtheorem{definition}[theorem]{Definition}
\newtheorem{remark}[theorem]{Remark}
\newtheorem{example}[theorem]{Example}

\newcommand{\Z}{{\mathbb Z}}

\renewcommand{\k}{\mathbb{k}}

\newcommand{\R}{\mathbb{R}}

\newcommand{\B}{\mathcal{B}}

\newcommand{\F}{\mathcal{F}}

\newcommand{\M}{\mathcal{M}}
\newcommand{\E}{\mathcal{E}}

\newcommand{\G}{\mathcal{G}}

\renewcommand{\P}{{\mathbb P}}

\newcommand{\bv}{{\bf v}}
\newcommand{\e}{\textbf{e}}

\newcommand{\bt}{\textbf{t}}

\DeclareMathOperator{\GL}{GL}

\DeclareMathOperator{\rank}{rank}
\DeclareMathOperator{\PL}{PL}
\DeclareMathOperator{\Berg}{Berg}
\DeclareMathOperator{\Span}{span}
\DeclareMathOperator{\conv}{conv}
\DeclareMathOperator{\cone}{cone}

\DeclareMathOperator{\codim}{codim}
\DeclareMathOperator{\Int}{Int}
\DeclareMathOperator{\relint}{relint}
\DeclareMathOperator{\Todd}{Todd}

\newcommand{\h}{\mathfrak{h}}

\newcommand{\GF}{\textup{GF}}

\newcommand{\LL}{\mathcal{L}}

\begin{document}

\title{On Ehrhart theory for tropical vector bundles}

\author{Suhyon Chong}
\address{Department of Mathematics, University of Pittsburgh,
Pittsburgh, PA, USA.}
\email{suc86@pitt.edu}

\author{Kiumars Kaveh}
\address{Department of Mathematics, University of Pittsburgh, Pittsburgh, PA, USA.}
\email{kaveh@pitt.edu}

\date{\today}

\begin{abstract}
The notion of a tropical vector bundle on a toric variety was recently introduced in \cite{Khan-Maclagan} and \cite{KM-trop-vb}. In this paper, we study the Euler characteristic and rank of global sections for tropical vector bundles. 
We associate a convex chain (a finite integer linear combination of indicator functions of convex polytopes) to a tropical vector bundle encoding its Euler characteristic. We then see that the Khovanskii-Pukhlikov theory of convex chains gives a combinatorial Hirzebruch-Riemann-Roch theorem for tropical vector bundles. This, in particular, applies to toric vector bundles. Also, we extend Klyachko's resolution of a toric vector bundle by split toric vector bundles to tropical vector bundles. As shown in \cite{KM-trop-vb}, every matroid comes with a tautological tropical vector bundle. We answer positively a question posed in \cite{KM-trop-vb} about equality of Euler characteristic with rank of space of global sections (in other words, vanishing of higher cohomologies) for the tautological bundle of a matroid. 
\end{abstract}

\maketitle
\tableofcontents







\section{Introduction}
The theory of divisors and line bundles on tropical varieties is a very well developed subject. On the contrary, the theory of higher rank vector bundles on tropical varieties has been lacking a general framework and is not developed as much. An early work on tropical vector bundles is \cite{Allermann}. More recently, tropical vector bundles on metric graphs have been introduced in \cite{Ulirsch}. Also, a notion of a tropical vector bundle has been explored by Jun, Mincheva and Tolliver (\cite{Kalina}) from the point of view of tropical scheme theory (in their approach, vector bundles are always direct sums of line bundles).

Recently, in \cite{KM-trop-vb, Khan-Maclagan} (and almost exactly at the same time) the notion of a \emph{tropical vector bundle} over a toric variety has been introduced. When the matroid data in the definition is representable, these are tropicalizations of toric vector bundles (torus equivariant vector bundles on toric varieties). 

Toric vector bundles of rank $r$ were famously classified by Klyachko (in the remarkable paper \cite{Klyachko}) in terms of compatible systems of filtrations on an $r$-dimensional vector space (the first classification of toric vector bundles goes back to Kaneyama \cite{Kaneyama}). 
Klyachko's classification has influenced much of work in toric vector bundles, and in particular, serves as the motivation for the definition of a tropical vector bundle in  \cite{Khan-Maclagan, KM-trop-vb}.


A tropical vector bundle on a toric variety, in the sense of \cite{Khan-Maclagan}, is a certain compatible collection of filtrations by flats of a matroid. On the other hand, a tropical vector bundle on a toric variety, in the sense of \cite{KM-trop-vb}, is a piecewise linear map from the fan to the (lifted) Bergman fan of a matroid (see Section \ref{subsec-trop-vb}). One can show that these are equivalent sets of data. In this paper, we follow the approach in \cite{KM-trop-vb}.

The goal of the present paper is to develop an Ehrhart theory for tropical vector bundles. Our main tool is Khovanskii-Pukhlikov theory of \emph{convex chains} (\cite{Khovanskii-Pukhlikov-1, Khovanskii-Pukhlikov-2}). We show that a tropical vector bundle on a complete toric variety naturally determines a convex chain whose values on lattice points gives the equivariant Euler characteristic of the bundle. This in turn yields a combinatorial Hirzebruch–Riemann–Roch type formula for tropical vector bundles.

In \cite{Khovanskii-Pukhlikov-1}, Khovanskii-Pukhlikov extend the Ehrhart theory of lattice convex polytopes to lattice convex chains. A \emph{convex chain} is a function $\alpha:\R^n \to \Z$ of the form $\alpha=\sum_i n_i\mathbbm{1}_{P_i}$, where $n_i\in\Z$ and $P_i$ are convex polytopes in $\R^n$. Here $\mathbbm{1}_P$ denotes the indicator function of a polytope $P$. When the $P_i$ are lattice polytopes, we call $\alpha$ a \emph{lattice convex chain}. For a convex chain $\alpha$, one considers $S(\alpha)$, the sum of values of $\alpha$ on lattice points, and $I(\alpha)$, the integral of $\alpha$. These are generalizations of the number of lattice points and volume of a polytope respectively. The algebra of convex chains is closely related (but not identical) to McMullen's polytope algebra (\cite{McMullen}). See \cite[Appendix]{EHL} for a good overview of the comparison between McMullen's polytope algebra and the convex chains perspective. 

One of the main results in \cite{Khovanskii-Pukhlikov-1} is that the function $\alpha \mapsto S(\alpha)$ behaves in a polynomial fashion for lattice convex chains. This far extends the Ehrhart polynomial of a lattice polytope. Furthermore, in \cite{Khovanskii-Pukhlikov-2}, they give a Hirzebruch-Riemann-Roch formula relating the sum $S(\alpha)$ and the integral $I(\alpha)$.  

Let $\E$ be a tropical vector bundle. In Section \ref{subsec-cc-trop-vb}, we show how to assign a convex chain $\alpha_\E$ to $\E$ (using equivariant Chern roots of $\E$). Our first main result is that the convex chain $\alpha_\E$ exactly computes the equivariant Euler characteristic of $\E$ (see Theorem \ref{Euler-Ch}):

\begin{theorem}\label{thm-Euler char-convex chain}
Let $\E$ be a tropical vector bundle on a toric variety $X_\Sigma$ with fan $\Sigma$, and let $\alpha_{\E}$ be the associated convex chain. Then for every character $u$ of the torus, $$\chi(X_\Sigma,\E)_u = \alpha_{\E}(u).$$  
\end{theorem}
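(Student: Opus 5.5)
Since the definitions of $\alpha_\E$ and of $\chi(X_\Sigma,\E)_u$ for a tropical bundle only appear later in the paper, I will work with the natural ones. The equivariant Euler characteristic is defined cone by cone, in Klyachko's style, through the Čech complex. For each character $u$ and each cone $\sigma\in\Sigma$, let $F_\sigma(u)$ be the flat of the matroid $M$ cut out by the conditions $\langle u, v_\rho\rangle \le$ (the filtration level on the ray $\rho$) for $\rho\in\sigma(1)$, i.e.\ the tropical analogue of $\bigcap_\rho E^\rho(\langle u,v_\rho\rangle)$. Then
\[
\chi(X_\Sigma,\E)_u=\sum_{\sigma\in\Sigma}(-1)^{\dim\sigma}\rank F_\sigma(u).
\]
The convex chain $\alpha_\E$ is built from the equivariant Chern roots. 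On each maximal cone the piecewise linear map to the Bergman fan lands in a cone of a maximal flag of flats, and on $\sigma$ this gives $r$ linear functions $\ell_{\sigma,1},\dots,\ell_{\sigma,r}$ with $r=\rank M$. These are the local Chern roots, namely the equivariant line bundles $\mathcal L_{\sigma,i}$ obtained by splitting along the adapted basis.

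The plan is to reduce to line bundles and use the known toric statement. First, for a torus-invariant line bundle given by a support function $\psi$ (a virtual polytope), I would use the Khovanskii–Pukhlikov theorem as the base case. In that setting the chain $\alpha_\psi$, the alternating sum over cones of indicator functions of the cones $\{u:\langle u,v_\rho\rangle\ge -\psi(v_\rho),\ \rho\in\sigma(1)\}$ (a Brion/Lawrence–Varchenko-type signed decomposition), satisfies $\alpha_\psi(u)=\chi(X_\Sigma,\mathcal O(D_\psi))_u$. This follows from the Čech computation, since $H^0$ on $U_\sigma$ in degree $u$ is one-dimensional exactly when $u$ lies in that cone.

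Second, I would show that both sides are additive over a ``local splitting''. For each $\sigma$, the rank of $F_\sigma(u)$ should equal the number of indices $i$ such that $u$ satisfies the inequalities defined by the $i$-th Chern root on $\sigma$. In other words, I will express the rank of a flat $F_\sigma(u)$ as $\sum_i \mathbbm 1[\,u\in C_{\sigma,i}\,]$. This holds because, on a cone of the Bergman fan, the filtration comes from a single maximal flag $\emptyset=G_0\subset G_1\subset\cdots\subset G_r$, and the rank of an intersection of flats from the flag equals the count of basis-type steps $G_{i}\setminus G_{i-1}$ whose level conditions are satisfied. This is the matroidal substitute for choosing a common adapted basis $\{e_i\}$ on each cone (Klyachko's compatibility condition). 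After that, $\chi_u$ becomes $\sum_\sigma(-1)^{\dim\sigma}\sum_i \mathbbm 1_{C_{\sigma,i}}(u)$, which by construction is $\alpha_\E(u)$. I will check that this matches the paper's definition of $\alpha_\E$ via equivariant Chern roots, namely the sum over $i$ of the local Brion-type chains.

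The main obstacle is the rank identity for flats on cones that are not maximal. The face $\sigma$ may sit in several maximal cones with different flags, and one must check that the count is independent of the chosen flag. I would handle this by taking, for each $\sigma$, the minimal cone of the Bergman fan containing the image of $\sigma$. Its flag is a chain, so any refinement to a maximal flag gives the same intersections. I would then prove $\rank(G_{i_1}\cap\cdots)$ equals the count using only the chain property, since intersections of nested flats are just the smallest one.
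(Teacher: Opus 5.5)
\textbf{There is a genuine gap.} It sits in the sentence asserting that $\sum_{\sigma}\pm\sum_i \mathbbm{1}_{C_{\sigma,i}}(u)$ is $\alpha_\E(u)$ ``by construction''.

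In the paper, $\alpha_\E$ is the convex chain whose multi-valued support function is $\h_\E$ (Theorem~\ref{th-convex-chain-supp-function}). It is not defined cone by cone. The only bridge between an alternating sum of translated dual cones and such a chain is Brianchon--Gram for virtual polytopes (Remark~\ref{rem-Brianchon-Gram}). That formula needs a \emph{single-valued} function that is linear on every cone of the fan.

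Your index $i$ is only local. The local roots on adjacent cones agree as multisets, not label by label, so there is no global ``$i$-th line bundle'' to which Brianchon--Gram could be applied. The natural global choice is $\h_i(x)=$ the $i$-th smallest of the $\langle u_{\sigma,j},x\rangle$. It is continuous but in general not linear on the cones of $\Sigma$, and reindexing cone by cone with it is false. For example, take local roots $\pm(x_1-x_2)$ on $\cone(\bv_1,\bv_2)$ and $u=0$: no root is nonnegative on the cone, yet $\h_2=|x_1-x_2|$ is.

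The paper handles this in three steps:
\begin{enumerate}
\item It refines $\Sigma$ until every $\h_i$ is linear on cones.
\item It proves that $\chi(X_\Sigma,\E)_u$ is unchanged under refinement (Theorem~\ref{th-toric-pull-back}, resting on the fiber identity of Proposition~\ref{lemma-BG-cone}).
\item Only then does it apply Brianchon--Gram to each $\h_i$ separately and sum, using $\h_\E=\sum_i[\h_i]$.
\end{enumerate}
The refinement-invariance step is the heart of the proof, and nothing in your proposal plays its role.

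Conversely, the step you single out as the main obstacle is essentially built into the definition. Definition~\ref{def-matroid-vb-1} supplies an adapted basis $B_\sigma$ for every cone $\sigma$, maximal or not. For subsets $A,A'$ of a basis, submodularity gives $\Span(A)\cap\Span(A')=\Span(A\cap A')$. Hence $\rank\bigcap_{\rho\in\sigma(1)}F^\rho_{\langle u,\bv_\rho\rangle}$ is the number of $j$ with $u_{\sigma,j}-u\in\sigma^\vee$ (Lemma~\ref{h^0sigma}).

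Your proposed route to this identity also rests on a false premise. $\Phi$ maps $\sigma$ linearly into an apartment $\widetilde{A}_{B_\sigma}$, which is a whole Coxeter complex of type $A_{r-1}$, not into a single cone of $\widetilde{\Berg}(\M)$. In the paper's $U_{2,3}$ example on $\P^2$, $\Phi(\sigma_{12})=\{(x_1,x_2,\min(x_1,x_2)) \mid x_1,x_2\ge 0\}$ crosses the wall $x_1=x_2$. There the flag switches between $\{2\}\subset\G$ and $\{1\}\subset\G$. So a ``minimal Bergman cone containing $\Phi(\sigma)$'' with a single flag need not exist. This wall-crossing is exactly why the $\h_i$ fail to be linear on $\Sigma$.

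Two smaller points:
\begin{itemize}
\item The paper's $\chi$ uses $(-1)^{\codim\sigma}$. Your $(-1)^{\dim\sigma}$ differs by $(-1)^n$; for $\mathcal{O}_{\P^1}$ at $u=0$ it gives $-1$.
\item Your line-bundle base case is Brianchon--Gram for virtual polytopes, not the Khovanskii--Pukhlikov Riemann--Roch theorem. If you define the chain as the alternating sum of cones, the identity with $\chi_u$ becomes a tautology, and all the content moves into identifying that sum with the virtual polytope.
\end{itemize}
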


A key step in the proof is the invariance of the equivariant Euler characteristic under refinements of the fan (see Theorem \ref{th-toric-pull-back}). This allows to pass to a refinement where the (multi-valued) support function of the tropical vector bundle becomes linear on all the cones in the fan. This then enables us to reduce the claim to the case of a sum of line bundles.

\begin{theorem}\label{thm-invariance}
Let $\Sigma'$ be a refinement of a complete fan $\Sigma$, and let $\E'$ be the pull–back of a tropical vector bundle $\E$ on $X_\Sigma$ to $X_{\Sigma'}$. Then for every character $u\in M$,
$$\chi(X_\Sigma, \E)_u = \chi(X_{\Sigma'}, \E')_u.$$
\end{theorem}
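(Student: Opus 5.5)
The plan is to write both sides as an Euler-characteristic integral over the support $|\Sigma|=|\Sigma'|=N_\R$. Such an integral does not depend on how the support is cut into cones, so invariance under refinement will follow. I would carry this out for each fixed character $u$, in four steps.

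\emph{Step 1 (cone formula).} Using the Klyachko-type description of the $u$-isotypic cohomology adopted in the paper for $\E$ (the complex over the cones of $\Sigma$ with coefficients in flats), write
\[
\chi(X_\Sigma,\E)_u=\sum_{\sigma\in\Sigma}(-1)^{\dim\sigma}\,\rank E^\sigma(u).
\]
Here $E^\sigma(u)$ is the flat cut out by the piecewise linear map $\Phi$ to the Bergman fan:
\[
E^\sigma(u)=\{e:\ \langle u,x\rangle\le \Phi(x)_e \text{ for all } x\in\sigma\}.
\]
Equivalently, it is the intersection of the flats attached to the rays of $\sigma$. The pull-back $\E'$ is given by the same map $\Phi$ restricted to the cones of $\Sigma'$, so $\chi(X_{\Sigma'},\E')_u$ is the same sum taken over $\tau\in\Sigma'$.

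\emph{Step 2 (Möbius inversion on flats).} Apply Möbius inversion in the lattice of flats $\LL$ of the matroid. This gives integers $g(F)$, $F\in\LL$, depending only on the matroid, such that
\[
\rank G=\sum_{F\le G}g(F)\quad\text{for every flat } G.
\]
Substituting this into Step 1 and interchanging the sums,
\[
\chi(X_\Sigma,\E)_u=\sum_{F\in\LL}g(F)\sum_{\sigma\in\Sigma}(-1)^{\dim\sigma}\,[\,F\le E^\sigma(u)\,].
\]
Since the coefficients $g(F)$ do not depend on the fan, it is enough to show that each inner sum is unchanged when $\Sigma$ is replaced by $\Sigma'$.

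\emph{Step 3 (each inner sum is an Euler characteristic).} For a fixed flat $F$, set
\[
C_F=\{x\in N_\R:\ \langle u,x\rangle\le\Phi(x)_e \text{ for all } e\in F\}.
\]
Then $F\le E^\sigma(u)$ exactly when $\sigma\subseteq C_F$. Relative interiors of cones partition the support, and the compactly supported Euler characteristic of an open cone of dimension $d$ is $(-1)^d$. Hence, if $C_F$ is a union of cones of $\Sigma$,
\[
\sum_{\sigma\subseteq C_F}(-1)^{\dim\sigma}=\chi_c(C_F).
\]
The right-hand side depends only on the closed set $C_F$. The same identity holds for $\Sigma'$ as long as $C_F$ is also a union of cones of $\Sigma'$. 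Every cone of $\Sigma'$ lies in a cone of $\Sigma$, so it is enough that $C_F$ is a subcomplex of $\Sigma$ itself.

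\emph{Step 4 (main obstacle).} The hard point is this subcomplex property: for each $F$, $C_F$ should be a union of cones of $\Sigma$, and the containment $\sigma\subseteq C_F$ should be decided by the rays of $\sigma$, matching the ray-intersection description of $E^\sigma(u)$. This needs the linear functions $x\mapsto\Phi(x)_e$ to behave well on each cone. Concretely, on each $\sigma$ the map $\Phi$ should land in a single cone of the Bergman fan, i.e.\ be linear with respect to a chain of flats.

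I would first try to get this from the definition of a tropical vector bundle, namely that $\Phi$ is linear into the Bergman fan on each cone of $\Sigma$. If that is not enough to decide containment from the rays, I would pass to a common refinement where every $C_F$ becomes a subcomplex. Both $\Sigma$ and $\Sigma'$ would then be compared with that refinement, which requires first proving the special case where the refinement is chosen fine enough. Once the subcomplex property holds, Steps 2 and 3 give the equality for every character $u\in M$.
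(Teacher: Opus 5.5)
Your Step 2 reduces the theorem to a claim that is false: that each inner sum $\sum_{\sigma}(-1)^{\dim\sigma}[F\le E^\sigma(u)]$ with $g(F)\neq 0$ is unchanged under refinement. So the argument has a genuine gap.

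Here is a counterexample. Take $\M=U_{2,3}$ and any smooth complete fan containing the positive quadrant $\sigma$ of $\R^2$. Let $\Phi=(\varphi_1,\varphi_2,\min(\varphi_1,\varphi_2))$ with $\varphi_1,\varphi_2\in\PL(\Sigma,\Z)$, so every cone uses the basis $\{1,2\}$. On $\sigma$ put $\varphi_1=\langle(-1,2),\cdot\rangle$ and $\varphi_2=\langle(2,-1),\cdot\rangle$, and let $u=0$.
\begin{itemize}
\item For $F=\G$ we have $g(\G)=2-3=-1$. The set $C_F\cap\sigma$ is the wedge $\cone((2,1),(1,2))$, which meets $\partial\sigma$ only at the origin.
\item Subdivide $\sigma$ by the ray through $(1,1)$. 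This removes $\sigma$ and adds two $2$-cones, none of which lies in $C_F$. It also adds the ray itself, which does lie in $C_F$. So the inner sum for $F=\G$ drops by $1$.
\item The total survives only because $F=\{3\}$ has the same indicator on every cone: $3\in E^\tau(0)$ iff $\min(\varphi_1,\varphi_2)\ge 0$ on $\tau$. Since $g(\{3\})=+1$, the two changes cancel.
\end{itemize}
Cancellation between different flats is therefore essential, and no flat-by-flat argument can succeed.

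Your Step 4 does not rescue this. It is true that $\sigma\subseteq C_F$ is decided by the rays, since on $\sigma$ each $\Phi(\cdot)_e$ is a minimum of linear functions. But $C_F\cap\sigma$ can still be a cone sitting in the interior of $\sigma$, as here. Comparing $\Sigma$ with a refinement on which every $C_F$ is a subcomplex is itself an instance of the theorem, and the example shows it is false flat by flat.

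The missing idea is to decompose along the adapted basis $B_\sigma$ rather than along the lattice of flats. All $F^\rho_i$ with $\rho\in\sigma(1)$ are spanned by subsets of $B_\sigma$. By submodularity, their intersection is spanned by the intersection of those subsets. Hence $\rank E^\sigma(u)=\sum_{j=1}^r\mathbbm{1}_{\sigma^\vee}(u_{\sigma,j}-u)$ (Lemma \ref{h^0sigma}), which is a sum of \emph{single} half-space conditions.

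For one half-space, the signed count of cones of a subdivision of $\sigma$ lying in it does not depend on the subdivision (Proposition \ref{Prop-BG-Cone supp}). For an intersection of two half-spaces it does depend, and that is exactly what the example exhibits. M\"obius inversion over the faces of $\sigma$ turns the one-half-space statement into the local identity $(-1)^{\codim\sigma}\mathbbm{1}_{\sigma^\vee}=\sum_{\tau\in\phi^{-1}(\sigma)}(-1)^{\codim\tau}\mathbbm{1}_{\tau^\vee}$ (Proposition \ref{lemma-BG-cone}).

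The pull-back has $u_{\tau,j}=u_{\sigma,j}$ whenever $\phi(\tau)=\sigma$. Summing the local identity over $j$ and then over $\sigma\in\Sigma$ proves the theorem cone by cone; this is the paper's argument. Your sign $(-1)^{\dim\sigma}$ differs from the paper's $(-1)^{\codim\sigma}$ by the global factor $(-1)^n$, which is harmless.
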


Combining Theorem \ref{thm-Euler char-convex chain} with the Khovanskii-Pukhlikov Hirzebruch–Riemann–Roch theorem for convex chains, immediately yields a combinatorial Hirzebruch-Riemann–Roch formula for tropical vector bundles.
For
$z=(z_\rho)_{\rho\in\Sigma(1)} \in \mathbb{R}^{\Sigma(1)}$, let $P(z)$ be the virtual polytope with the support numbers $z_\rho$ (see Section \ref{sec-KhPukh}, paragraph after Theorem \ref{th-convex-chain-supp-function}).
For the convex chain $\alpha_\E$ associated to a tropical vector bundle $\E$ on $X_\Sigma$, define $\alpha_\E[z]:= \alpha_\E * P(z)$ (see Proposition \ref{prop-convoltuion}).

\begin{corollary}
    Let $\E$ be a tropical vector bundle over $X_\Sigma$ and $\alpha_\E$ be the associated convex chain. Then,
    $$\Todd\left(\frac{\partial}{\partial z}\right) I(\alpha_\E[z]) \Big\vert_{z=0} =\chi(X_\Sigma,\E).$$
\end{corollary}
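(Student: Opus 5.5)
The corollary should follow from Theorem \ref{thm-Euler char-convex chain} combined with the Khovanskii--Pukhlikov Hirzebruch--Riemann--Roch theorem for lattice convex chains, in three steps.

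First I record what Theorem \ref{thm-Euler char-convex chain} gives after summing over characters. For every $u\in M$ we have $\chi(X_\Sigma,\E)_u=\alpha_\E(u)$. Since $\E$ lives on a complete toric variety, only finitely many weight spaces of the cohomology are nonzero. Summing over $u\in M$ therefore gives
$$\chi(X_\Sigma,\E)=\sum_{u\in M}\chi(X_\Sigma,\E)_u=\sum_{u\in M}\alpha_\E(u)=S(\alpha_\E).$$
This uses that $\alpha_\E$ has compact support, which holds because it is a finite combination of indicator functions of polytopes. So the corollary reduces to the identity
$$\Todd\left(\frac{\partial}{\partial z}\right) I(\alpha_\E[z])\Big\vert_{z=0}=S(\alpha_\E).$$

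Second, I check the hypotheses of the Khovanskii--Pukhlikov theorem as recalled in Section \ref{sec-KhPukh}. That theorem applies to a lattice convex chain that is a combination of polytopes (possibly virtual) whose normal fans are coarsened by a fixed smooth fan $\Sigma$. For such a chain $\alpha$, the function $z\mapsto I(\alpha * P(z))$ is a polynomial near $z=0$, and applying the Todd operator at $z=0$ recovers $S(\alpha)$.

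To verify these hypotheses for $\alpha_\E$, I would use its construction in Section \ref{subsec-cc-trop-vb}. By Theorem \ref{thm-invariance}, I may replace $\Sigma$ by a smooth refinement $\Sigma'$ on which every branch of the multivalued support function of $\E$ is linear on each cone. On $\Sigma'$ the chain $\alpha_\E$ is built from equivariant Chern roots, which are integral piecewise linear functions on $\Sigma'$. It is then a $\Z$-combination of virtual lattice polytopes whose support numbers are integral and compatible with $\Sigma'$. The convolution $\alpha_\E[z]$ from Proposition \ref{prop-convoltuion} is exactly the deformation by support numbers $z$ to which the Khovanskii--Pukhlikov HRR applies.

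The main obstacle I expect is consistency under refinement. The variables $z_\rho$ in the statement range over $\Sigma(1)$, while the HRR theorem may need the smooth refinement $\Sigma'$, which introduces new rays. I would handle this by applying the Todd operator on $\Sigma'$. I would then argue that both sides are unchanged: the left side because $S(\alpha_\E)$ does not depend on the fan, and the right side through Theorem \ref{thm-invariance}. If $\Sigma$ itself is already smooth, no refinement is needed and the argument is direct.
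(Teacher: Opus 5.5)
Your reduction is the paper's argument. Summing Theorem~\ref{Euler-Ch} over $u\in M$ gives $\chi(X_\Sigma,\E)=S(\alpha_\E)$. The sum is finite because $\alpha_\E$ has compact support; there is no cohomology of $\E$ to appeal to. Then Theorem~\ref{Thm-Kh-Pu} is applied with $c=0$, $f=1$, using $\alpha_\E[0]=\alpha_\E$. The paper does exactly this and leaves the hypotheses of Theorem~\ref{Thm-Kh-Pu} implicit.

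Where you go further, in your treatment of the refinement, there is a genuine gap. The left-hand side depends on the fan: the variables are $z\in\R^{\Sigma(1)}$, the operator is $\prod_{\rho\in\Sigma(1)}\partial_\rho/(1-e^{-\partial_\rho})$, and $P(z)$ is deformed only in the directions of $\Sigma(1)$. Applying Khovanskii--Pukhlikov on a smooth refinement $\Sigma'$ proves $\Todd(\partial/\partial z')\, I(\alpha_\E*\mathbbm{1}_{P'(z')})|_{z'=0}=S(\alpha_\E)$ with $z'\in\R^{\Sigma'(1)}$. That is a correct formula, but a different one. Transferring it back to $\Sigma$ requires knowing that the $\Sigma$-expression also equals $S(\alpha_\E)$, which is the corollary itself. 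So ``the left side is unchanged because $S(\alpha_\E)$ does not depend on the fan'' is circular; Theorem~\ref{thm-invariance} only controls $\chi$.

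No repair is possible for singular fans, because there the identity is false. Take the fan with rays $(1,0),(0,1),(-1,-2)$ (a weighted projective plane with one cone of index $2$) and $\E$ the trivial rank one bundle, so $\alpha_\E=\mathbbm{1}_{\{0\}}$ and $\chi=1$. Here $I(\mathbbm{1}_{P(z)})=(z_1+2z_2+z_3)^2/4$, and the Todd operator at $z=0$ gives $\frac{1}{12}\cdot 3+\frac14\cdot\frac52=\frac78$. The corollary has to be read with $\Sigma$ smooth.

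Even for smooth $\Sigma$, your claim that ``no refinement is needed'' is not supported by what you wrote. Your check of the hypotheses passes to a refinement on which the sorted branches of $\h_\E$ become linear, and those branches are typically not linear on the cones of $\Sigma$ (e.g.\ $\h_1$ in the Fano plane example). Compatibility with $\Sigma$ itself is essential: for the segment from $0$ to $(1,1)$ and the fan of $\P^1\times\P^1$, the Todd expression gives $3$ instead of $2$.

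The missing ingredient is that, for smooth $\Sigma$, $\alpha_\E$ is a $\Z$-combination of virtual lattice polytopes whose support functions are linear on the cones of $\Sigma$. The split construction of Section~\ref{subsec-split-res} provides this: $\h_\E=\sum_k(-1)^k\sum_i[\h_{k,i}]$, where each $\h_{k,i}$ is the support function of a rank one summand of $\F_k$ on $X_\Sigma$. Each $\h_{k,i}$ is therefore linear on the cones of $\Sigma$, and integral if you choose $f$ integral. Applying Theorem~\ref{Thm-Kh-Pu} to each term with $z\in\R^{\Sigma(1)}$, and summing by bilinearity of $*$, then gives the corollary.
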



In \cite{Klyachko}, for a toric vector bundle $\E$ on a smooth complete toric variety $X_\Sigma$, Klyachko constructs a resolution $0 \to \E \to \F_0 \to \cdots \to \F_n \to 0$ by split toric vector bundles on $X_\Sigma$. We recall that a toric vector bundle is \emph{split} if it is equivariantly isomorphic to a direct sum of toric line bundles. In Section \ref{subsec-split-res}, we extend the construction of a split resolution to tropical vector bundles. We note that we do not yet have a notion of morphism of tropical vector bundles so we cannot talk about a resolution. Instead, given a tropical vector bundle $\E$ on a smooth complete toric variety, we construct a sequence of split tropical vector bundles $\F_0, \ldots, \F_n$ such that alternating sum of their equivariant $K$-classes gives the equivariant $K$-class of $\E$ (Theorem \ref{th-K-class-resolution}). This then gives an alternative way to construct the convex chain $\alpha_\E$ which encodes the equivariant Chern roots of $\E$. 

An important example of a tropical vector bundle with a matroid $\M$ is the tautological bundle introduced in \cite{KM-trop-vb}. Namely, any matroid $\M$ comes with a \emph{tautological tropical vector bundle} $\E_\M$. This construction is motivated by two toric vector bundles - the universal subbundle and the universal quotient bundle - arising from the data of a representable matroid in \cite{BEST}. In Section \ref{sec-global sec-Euler c-taut}, we describe the global sections matroid and the Euler characteristic of this bundle and show that the rank of global sections is equal to the Euler characteristic. We interpret this as vanishing of higher cohomologies of the tautological bundle (See Theorem \ref{thm-vasnishing of higher coh-taut}). This is in agreement with the vanishing of higher cohomologies of tautological bundles for the representable case in \cite{Eur}.
\begin{theorem}[Vanishing of higher cohomologies of a tautological bundle]\label{thm-vasnishing-taut}
For any character $u$ we have:
    $$\chi(X_{m},\E_\M)_u= h^0(X_{m},\E_\M)_u.$$
\end{theorem}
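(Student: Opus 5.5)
The plan is to compute both sides of the identity explicitly, character by character, and check that they agree. The expected answer, by analogy with the representable case in \cite{Eur}, is as follows: both sides are supported on the characters $u=\e_i$ with $i$ a non-loop of $\M$, or more generally on characters coming from the ground set of $\M$ in the way prescribed by the tautological filtrations, and their values there are given by the rank of the corresponding flat. So the proof splits into a computation of $h^0(X_m,\E_\M)_u$ and a computation of $\chi(X_m,\E_\M)_u$.

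\emph{Global sections.} Following \cite{KM-trop-vb}, $h^0(X_m,\E_\M)_u$ is the rank of the intersection over all rays $\rho_S$ of the permutohedral fan ($\emptyset\neq S\subsetneq[m]$) of the flats $F_{\rho_S}(\langle u,\e_S\rangle)$ in the filtration attached to $\rho_S$ by the piecewise linear map to the Bergman fan. For the tautological bundle these filtrations have a single jump: at $\rho_S$ the flat is the whole ground set for $\langle u,\e_S\rangle\le 0$, the closure $\mathrm{cl}(S)$ (or its complement, depending on the convention in \cite{KM-trop-vb}) at the next step, and empty beyond that.
\begin{itemize}[leftmargin=*]
\item I would first use the rays $\rho_{\{i\}}$ and $\rho_{[m]\setminus\{i\}}$ to cut the possible $u$ down to a box of the form $\{0,1\}^m$ modulo the lineality direction.
\item Then I would show that the intersection of flats is nonempty only for $u=\e_i$, where it equals the flat generated by $i$.
\end{itemize}
This gives the global sections matroid and $h^0_u$ as a simple matroid quantity.

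\emph{Euler characteristic.} By Theorem \ref{thm-Euler char-convex chain}, $\chi(X_m,\E_\M)_u=\alpha_{\E_\M}(u)$. The permutohedral fan is smooth, and I expect the tautological map to the Bergman fan to already be linear on each maximal cone. A maximal cone corresponds to a flag $S_1\subset S_2\subset\cdots$, equivalently a permutation $\sigma$, and on it the support function is linear with Chern roots $\{\e_b : b\in B_\sigma\}$. Here $B_\sigma$ is the lexicographically first (greedy) basis of $\M$ along $\sigma$. Hence no refinement via Theorem \ref{thm-invariance} is needed, and $\alpha_{\E_\M}$ can be written through the Brion/Lawrence--Varchenko description of the Khovanskii--Pukhlikov chain: as an alternating sum over maximal cones $\sigma$ and $b\in B_\sigma$ of indicator functions of shifted (half-open) tangent cones at the vertex $\e_b$.

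The main obstacle is the \textbf{evaluation of this alternating sum}, i.e.\ showing that the contributions cancel except at $u=\e_i$ and give exactly the value of $h^0_u$ found above. I would try to group the cones by the basis element $b$. For fixed $b$, the set of permutations $\sigma$ with $b\in B_\sigma$ is the set of orderings in which $b$ precedes every element of its fundamental circuits, so its union of cones is convex. The signed sum of tangent cones at $\e_b$ over this set should then collapse to $\mathbbm{1}_{\{\e_b\}}$, with the cancellation coming from the Euler characteristic of a contractible region. If that grouping fails, the fallback is to use Klyachko's split resolution (Theorem \ref{th-K-class-resolution}): it expresses $[\E_\M]$ as an alternating sum of sums of nef line bundles on $X_m$, whose higher cohomology vanishes. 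Each of their Euler characteristics then counts lattice points of a generalized permutohedron (a matroid-type polytope), and I would evaluate the resulting alternating sum of lattice-point counts by deletion--contraction on $\M$.
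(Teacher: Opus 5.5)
You have a genuine gap: the step you call the main obstacle, evaluating the alternating sum, is the whole content of the theorem, and the mechanism you propose for it does not work.

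The parts before it are fine. Your computation of $h^0$ is essentially the paper's: the rays $\rho_{\{j\}}$ and $\rho_{[m]\setminus\{j\}}$, together with the lineality direction, force $u=\e_i$. The resulting flat is $\mathrm{cl}(i)$, of rank $1$ or $0$ according as $i$ is a non-loop or a loop. You are also right that the Chern roots $\e_b$, $b\in B_\sigma$, sort linearly on each permutohedral chamber, so no refinement is needed.

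The grouping by basis elements fails for two reasons.

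First, your description of $\{\sigma : b\in B_\sigma\}$ is wrong. For the greedy basis, $b\in B_\sigma$ iff $b$ is not in the closure of the elements preceding it, i.e.\ iff $b$ is not the \emph{last} element of any circuit through $b$. This region is in general not convex. For $\M=U_{2,3}$ and $b=1$ it is the union of the four chambers in which $1$ is not last, namely $\{x_1\ge x_2\}\cup\{x_1\ge x_3\}$.

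Second, even over a convex union $D$ of chambers, the signed sum of tangent cones at a common vertex does not collapse to a point. By Proposition \ref{Prop-BG-Cone supp}, the Brianchon--Gram version gives, up to sign and reflection, the indicator of the open cone $\relint(D^\vee)$, which is unbounded unless $D$ is the whole space. In the Lawrence--Varchenko version, a single chamber of the fan of $\P^1$ already contributes a half-line. A point appears only when $D$ is everything, i.e.\ when $b$ is a coloop. So your per-$b$ pieces are unbounded chains, and the cancellation must happen \emph{between} different basis elements, which your grouping does not address.

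The fallback is not an argument either. Nefness of the summands in Theorem \ref{th-K-class-resolution} is asserted without proof, and it is irrelevant for Euler characteristics anyway. The ``deletion--contraction'' evaluation of the resulting alternating sum is never specified, so the fallback just restates the problem.

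The missing idea is to stay with the definition $\chi_u=\sum_\sigma(-1)^{\codim\sigma}h^0(U_\sigma,\E_\M)_u$ and group by \emph{subsets} rather than by basis elements.
\begin{itemize}
\item For the cone with flag $S_1\subsetneq\cdots\subsetneq S_l=[m]$, $h^0(U_\sigma,\E_\M)_u=0$ if some $\langle u,\e_{S_k}\rangle>1$. Otherwise it equals $\rank(S_t)$, where $S_t$ is the first member of the flag with $\langle u,\e_{S_t}\rangle=1$ (Proposition \ref{prop-weighted-h0}).
\item Hence $\chi_u=\sum_S\rank(S)\,c_u(S)$, where $c_u(S)$ is a signed count of flags of subsets of $[m]$ and does \emph{not} depend on $\M$.
\item Sign-reversing involutions on flags kill every coefficient but one. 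For $u=\e_i$ and $S\neq\{i\}$, toggle $S\setminus\{i\}$ immediately below $S$. For $u$ not of the form $\e_i$ (with $\sum_j u_j=1$), choose $a$ with $u_a<0$ and, where $a\in S_k\setminus S_{k-1}$, toggle $S_{k-1}\cup\{a\}$.
\item The surviving coefficient $c_{\e_i}(\{i\})=1$ is an Euler characteristic of the flags of $[m]\setminus\{i\}$.
\end{itemize}
This gives $\chi_{\e_i}=\rank\{i\}$ and $\chi_u=0$ otherwise, in agreement with $h^0$.
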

\begin{remark}
Although Theorem \ref{thm-vasnishing-taut} suggests vanishing of higher cohomologies, as of now, we do not have a notion of higher cohomologies for tropical vector bundles. We also do not have notions of tensor product, exterior powers, or symmetric powers of tropical vector bundles.
\end{remark}


\subsection*{Acknowledgment}
We would like to thank Askold Khovanskii for suggesting the association of a multi-valued support function and hence a convex chain to a toric vector bundle. We are in debt to Chris Eur and Chris Manon for many helpful discussions. The second author is partially supported by the National Science Foundation Grant DMS-210184 and a Simons Collaboration Grant.
\bigskip

\noindent{\bf Notation:}
\begin{itemize}
    \item $\k$ denotes the ground field.
    \item $T \cong \mathbb{G}_m^n$ denotes an algebraic torus with $M$ and $N$ its character and cocharacter lattices respectively. We denote the pairing between them by $\langle \cdot, \cdot \rangle: M \times N \to \Z$. We let $M_\R = M \otimes \R$ and $N_\R = N \otimes \R$ be the corresponding $\R$-vector spaces. 

    \item $U_\sigma$ is the affine toric variety corresponding to a (strictly convex rational polyhedral) cone $\sigma \subset N_\R$.
    \item $\Sigma$ is a fan in $N_\R$ with corresponding toric variety $X_\Sigma$. We denote the support of $\Sigma$ by $|\Sigma|$. 
    \item $\PL(N_\R, \R)$ is the set of piecewise linear functions on $N_\R$. We denote the subset of piecewise linear functions that attain integer values on $N$ by $\PL(N, \Z)$. Finally $\PL(\Sigma, \R)$ (respectively $\PL(\Sigma, \Z)$) denotes the subset of piecewise linear functions (respectively integral piecewise linear functions) that are linear on cones in $\Sigma$.
    \item $\M$ denotes a matroid with ground set $\G$.
    \item $ \widetilde{\Berg}(\M)$  denotes the (lifted) Bergman fan of $\M$.  
    \item $\widetilde{\GF}(\M)$ denotes the (lifted) Gr\"obner fan of $\M$.
    \item $\E$ denotes a rank $r$ tropical vector bundle on $X_\Sigma$ with piecewise linear map $\Phi_\E: |\Sigma| \to \widetilde{\Berg}(\M)$.
    \item  $(F^\rho_i)$ is a Klyachko filtration by flats associated to a ray $\rho$. 
    \item $\h_\E$ is the multi-valued support function of $\E$ given by equivariant Chern roots.
    \item $\alpha_\E$ is the convex chain associated to $\E$.
    \item $\E_\M$ denotes tautological bundle of matroid $\M$.
    
\end{itemize}

\section{Preliminaries}



In this section, we review some background material on toric vector bundles and tropical linear spaces. Then we review two equivalent definitions of a tropical vector bundle in \cite{KM-trop-vb} motivated by the classification of toric vector bundles as piecewise linear maps introduced in \cite{KM-TVBs-valuations}. 

Throughout, $T$ denotes an $n$-dimensional torus over a field $\k$. We denote the character and cocharacter lattices of $T$ by $M$ and $N$ respectively.
Also, $X_\Sigma$ denotes a complete $T$-toric variety with fan $\Sigma$ in $N_\R = N \otimes \R$.
\subsection{Klyachko classification of toric vector bundles} 
Let $\E$ be a toric vector bundle over $X_\Sigma$, that is, a vector bundle over $X_\Sigma$ equipped with a $T$-linearization.

For each cone $\sigma\in \Sigma$,  the space of sections $\Gamma(U_\sigma, \mathcal{E}\vert_{U_\sigma})$ is a $T$-module. We let $\Gamma(U_\sigma,\mathcal{E}\vert_{U_\sigma})_u $ be the weight space corresponding to a character $u\in M$.  One has the weight space decomposition:
$$\Gamma(U_\sigma,\mathcal{E}\vert_{U_\sigma}) = \bigoplus_{u\in M } \Gamma(U_\sigma,\mathcal{E}\vert_{U_\sigma})_u$$
We fix a point $x_0$ in the dense orbit $X_0 \subset X_\Sigma$. Let $E = \mathcal{E}_{x_0}$ denote the fiber of $\mathcal{E}$ over $x_0$. Then every section in $\Gamma(U_\sigma,\mathcal{E}\vert_{U_\sigma})_u$ is determined by its value at $x_0$. Thus, by restricting the section to $E$, we obtain an embedding  $\Gamma(U_\sigma,\mathcal{E}\vert_{U_\sigma})_u \xhookrightarrow{} E$.\\

We denote the image of $\Gamma(U_\sigma,\mathcal{E}\vert_{U_\sigma})_u$ in $E$ by $E^\sigma_u$. For $u'\in \sigma^\vee \cap M$, multiplication by $\chi^{u}$ given an injection $\Gamma(U_\sigma,\mathcal{E}\vert_{U_\sigma})_u \xhookrightarrow{} \Gamma(U_\sigma,\mathcal{E}\vert_{U_\sigma})_{u-u'}$ which induces an inclusion $E^\sigma_u \subseteq E^\sigma_{u-u'}$.\\

If $u'\in \sigma^\perp$, then these maps are isomorphisms and $E^\sigma_u$ only depends on $[u] \in M_\sigma = M/(\sigma^\perp \cap M)$. Thus, for $\rho\in \Sigma(1)$,  $E^\rho_i := E_u^\rho$ where $\langle u, \bv_\rho\rangle = i$ is well-defined. Then, we have a decreasing filtration of $E$
$$\cdots\supseteq E^\rho_{i-1} \supseteq E^\rho_i \supseteq E^\rho_{i+1}\supseteq \cdots$$

\begin{proposition}
        Every toric vector bundle $\mathcal{E}$ of rank $r$ on $U_\sigma$ splits equivariantly into a sum of trivial toric line bundles:
    $$\mathcal{E} = \bigoplus_{i=1}^r \mathcal{L}_{u_i}$$
    where $[u_i] \in M_\sigma$. Here $\mathcal{L}_{[u]}$ denotes the trivial line bundle where $T$ acts via $u$.
\end{proposition}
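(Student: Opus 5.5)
The plan is to reduce the statement to a graded-module problem over the semigroup ring of the cone. Write $A = \k[\sigma^\vee \cap M] = \Gamma(U_\sigma, \mathcal{O})$. Since $U_\sigma$ is affine, $\mathcal{E}$ is determined by the $M$-graded $A$-module $P = \Gamma(U_\sigma, \mathcal{E}\vert_{U_\sigma}) = \bigoplus_u \Gamma(U_\sigma,\mathcal{E}\vert_{U_\sigma})_u$, and $P$ is finitely generated and projective of rank $r$. It suffices to show that $P \cong \bigoplus_{i=1}^r A\cdot \chi^{u_i}$ as graded modules, because the graded module $A$ shifted by $u$ is exactly the module of sections of $\mathcal{L}_{u}$. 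First I reduce to the case where $\sigma$ is full-dimensional. In general, $U_\sigma \cong U_{\sigma'} \times T''$ with $\sigma'$ full-dimensional in a sublattice, and $T''$ a torus whose character lattice is $\sigma^\perp \cap M$. On the torus factor every equivariant bundle is trivial, with sections $\k[\sigma^\perp\cap M]\otimes E$ and an action that can be absorbed into the choice of representative $u_i$. This is why only the classes $[u_i]\in M_\sigma$ are well defined.

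In the full-dimensional case, $\sigma^\vee$ is strongly convex, so $U_\sigma$ has a $T$-fixed point $x_\sigma$. The maximal graded ideal is $\mathfrak{m} = \bigoplus_{u \neq 0,\, u\in\sigma^\vee} \k\chi^u$. The fiber $P/\mathfrak{m}P = \mathcal{E}_{x_\sigma}$ is an $r$-dimensional $T$-representation, and it decomposes into weight lines with weights $u_1,\dots,u_r$ because $T$ is diagonalizable. I would choose homogeneous lifts $s_i \in P_{u_i}$ of a weight basis. These define a graded map $\phi: \bigoplus_i A(-u_i) \to P$, which is surjective by the graded Nakayama lemma. For this step I need the grading to be bounded in the right direction. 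The key point is that $\mathfrak{m}$ consists of elements of nonzero degree lying in a pointed cone, so an ordering by a linear functional positive on $\sigma^\vee\setminus\{0\}$ makes the grading positive. Surjectivity then follows, and $\phi$ is an isomorphism because it is a surjection between projective modules of the same rank $r$ over a domain. Its kernel is projective of rank $0$ and hence zero.

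The main obstacle is the Nakayama step. $P$ is graded by $M$ and not by $\mathbb{Z}_{\ge 0}$, so I must check that the degrees of $P$ are bounded below with respect to the chosen functional. The plan is to use finite generation: pick finitely many homogeneous generators of $P$. Every degree of $P$ then lies in a finite union of translates $v_j+\sigma^\vee$, so the functional is bounded below on the support of $P$. The standard descending argument then shows $P = \sum A s_i + \mathfrak{m}P$ implies $P=\sum A s_i$. Alternatively, I would cite the equivariant Quillen–Suslin / Serre theorem for affine toric varieties and spell out only the weight bookkeeping.
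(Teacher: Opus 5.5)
The paper gives no proof of this proposition; it is recalled as standard background from Klyachko. Your argument is correct: the graded Nakayama step with the $\Z$-grading $u\mapsto\langle u,v\rangle$, $v\in\Int(\sigma)\cap N$, works, and so does the rank-zero-kernel argument.

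It differs from the usual proof in the literature, which is geometric and handles all cones at once. That proof works at the distinguished point $x_\sigma$ of the closed orbit $O(\sigma)\subset U_\sigma$. The stabilizer of $x_\sigma$ is the subtorus with cocharacter lattice $\Span(\sigma)\cap N$, so its character lattice is exactly $M_\sigma=M/(\sigma^\perp\cap M)$. One lifts an eigenbasis of $\mathcal{E}_{x_\sigma}$ to $T$-eigensections $s_1,\dots,s_r$ on $U_\sigma$. The locus where $s_1\wedge\cdots\wedge s_r$ vanishes is closed, $T$-stable and misses $x_\sigma$. It is therefore empty, because every nonempty closed $T$-stable subset of $U_\sigma$ contains $O(\sigma)$.

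Your route is the algebraic version of this. The orbit-closure argument becomes graded Nakayama, and the stabilizer bookkeeping becomes your product reduction. The geometric proof avoids the case split and makes the appearance of $M_\sigma$ automatic. Yours is self-contained commutative algebra.

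The one step stated too loosely is the reduction to full-dimensional $\sigma$. What you need is not that equivariant bundles on $T''$ are trivial. You need that $\mathcal{E}$ on $U_{\sigma'}\times T''$ is the external product of its restriction to $U_{\sigma'}\times\{1\}$ with $\mathcal{O}_{T''}$. Concretely:
\begin{itemize}
\item Choose a complement $M'$ of $\sigma^\perp\cap M$ in $M$. Then $\sigma^\vee\cap M=(\sigma^\vee\cap M')\oplus(\sigma^\perp\cap M)$.
\item The homogeneous units $\chi^{u''}$, $u''\in\sigma^\perp\cap M$, identify $P_u$ with $P_{u+u''}$. Hence $P\cong P'\otimes_{\k}\k[\sigma^\perp\cap M]$, where $P'=\bigoplus_{u'\in M'}P_{u'}$.
\item $P'\cong P\otimes_A A'$ via $\chi^{u'+u''}\mapsto\chi^{u'}$, with $A'=\k[\sigma^\vee\cap M']$. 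So $P'$ is finitely generated projective over $A'$, and your full-dimensional argument applies to it.
\end{itemize}
Since $A(-u)\cong A(-u-u'')$ via multiplication by $\chi^{u''}$, this also shows precisely why only the classes $[u_i]\in M_\sigma$ are well defined.
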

We denote the multiset $\{[u_1],\ldots,[u_r]\}$ by $u(\sigma)$. Then, for $\sigma\in \Sigma$,
the filtrations $(E^\rho_i)_{i\in\mathbb{Z}}, \rho\in \Sigma(1)$ satisfy the following compatibility condition: There is a decomposition of $E$ into a direct sum of 1-dimension subspaces 
$$E = \bigoplus_{[u]\in u(\sigma)} L_{[u]}$$
such that for $\rho\in \sigma(1)$,
$E_i^\rho = \displaystyle\sum_{\langle u,\bv_\rho\rangle \geq i} L_{[u]}$.

The Klyachko data of a rank $r$ toric vector bundle $\mathcal{E}$ on $X_\Sigma$ is a family of decreasing filtrations $(E_i^\rho)$, in an $r$-dimensional vector space $E$, satisfying the compatibility condition:
 \begin{itemize}
     \item For each $\sigma\in\Sigma$, there is a basis $B_\sigma =\{b_{\sigma,1},\ldots,b_{\sigma,r}\}$ in $E$ and characters $u(\sigma)=\{u_{\sigma,1},\ldots,u_{\sigma,r}\}\subset M$ such that $E_i^\rho = \Span\{b_{\sigma,j}\in B_\sigma \mid \langle u_{\sigma,j},\bv_\rho\rangle \geq i\}$ for each $\rho\in \sigma(1)$.
\end{itemize}

\begin{theorem}[Klyachko]
The category of toric vector bundles on $X_\Sigma$ is equivalent to the
 category of compatible filtrations on finite dimensional $\k$-vector spaces.    
\end{theorem}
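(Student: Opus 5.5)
This is Klyachko's classification theorem. I will construct functors in both directions and check that they are mutually quasi-inverse.

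\emph{From bundles to filtrations.} The functor $\E \mapsto (E,(E^\rho_i))$ was described above. Take the fiber $E=\E_{x_0}$. Define $E^\rho_i$ as the image of the $u$-weight space of $\Gamma(U_\rho,\E|_{U_\rho})$ in $E$, for any $u$ with $\langle u,\bv_\rho\rangle=i$. The compatibility condition follows from the preceding Proposition. For each $\sigma$, the equivariant splitting $\E|_{U_\sigma}\cong\bigoplus_j \LL_{u_{\sigma,j}}$ gives a frame of sections $s_j$ of weight $u_{\sigma,j}$. Setting $b_{\sigma,j}=s_j(x_0)$ and restricting to $U_\rho\subset U_\sigma$ for $\rho\in\sigma(1)$ shows that
$$E^\rho_i=\Span\{b_{\sigma,j} : \langle u_{\sigma,j},\bv_\rho\rangle\ge i\}.$$
This uses the fact that a weight-$u$ section of $\LL_{u'}$ over $U_\rho$ is $\chi^{u-u'}$ times the frame, and is regular iff $\langle u'-u,\bv_\rho\rangle\ge 0$. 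Equivariant morphisms commute with evaluation at $x_0$ and with the weight decompositions, so they induce linear maps $E\to E'$ preserving every filtration. This makes the construction a functor.

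\emph{From filtrations to bundles.} Given compatible filtrations, I would reconstruct the bundle on each $U_\sigma$ as the equivariant module
$$\Gamma_\sigma=\bigoplus_{u\in M}\ \bigcap_{\rho\in\sigma(1)} E^\rho_{\langle u,\bv_\rho\rangle}\otimes\chi^u\ \subset\ E\otimes\k[M].$$
Compatibility says that in the basis $B_\sigma$ this intersection is spanned by those $b_{\sigma,j}$ with $\langle u_{\sigma,j}-u,\bv_\rho\rangle\ge0$ for all $\rho\in\sigma(1)$. Hence
$$\Gamma_\sigma=\bigoplus_j \k[\sigma^\vee\cap M]\,\chi^{u_{\sigma,j}}\, b_{\sigma,j},$$
which is a free module of rank $r$, i.e.\ an equivariantly trivial bundle $\bigoplus_j\LL_{u_{\sigma,j}}$ on $U_\sigma$.

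For a face $\tau\preceq\sigma$, localizing $\Gamma_\sigma$ at $\chi^{m}$ with $m\in\sigma^\vee\cap\tau^\perp$ relative interior gives $\Gamma_\tau$. Indeed, both sides are described by the same intersection over $\tau(1)\subset\sigma(1)$, and localization removes exactly the conditions for rays not in $\tau$. All these modules sit inside the common space $E\otimes\k[M]$ (sections over the torus), so the gluing maps are identities there. The cocycle condition is therefore automatic, and the pieces glue to a toric vector bundle with generic fiber $E$. A filtered map $E\to E'$ sends $\Gamma_\sigma$ into $\Gamma'_\sigma$, which gives functoriality.

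\emph{Quasi-inverse checks.} Starting from filtrations, the weight spaces of $\Gamma_\rho$ recover $E^\rho_i$ by construction. Starting from $\E$, the sections over $U_\sigma$ are exactly the $T$-invariant rational sections that are regular along every codimension-one orbit $D_\rho$, $\rho\in\sigma(1)$. This holds because $U_\sigma$ is normal and $\E$ is locally free, so a section regular off codimension two extends (Hartogs). This identifies $\Gamma(U_\sigma,\E)$ with $\Gamma_\sigma$ built from $\E$'s own filtrations, naturally in $\E$.

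I expect the main obstacle to be this last step: the equality of $\Gamma(U_\sigma,\E)$ with the intersection over rays, together with full faithfulness on morphisms. Both rest on the reflexivity/Hartogs argument. I would handle them by restricting first to the union of the torus and the divisors $D_\rho$, then extending, using the explicit split form to verify the weight-space description on $U_\rho$.
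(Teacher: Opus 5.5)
The paper gives no proof of this statement. It is quoted as background from Klyachko's work, together with its reformulation via piecewise linear maps, so there is no internal argument to compare with.

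Your outline is the standard proof, and it is sound:
\begin{itemize}
\item The local equivariant splitting produces the compatible bases $B_\sigma$.
\item The graded modules $\Gamma_\sigma\subset E\otimes\k[M]$ are free. They localize to $\Gamma_\tau$ for faces $\tau\preceq\sigma$, because $B_\sigma$ is adapted to every $\rho\in\tau(1)\subseteq\sigma(1)$ and $\tau^\vee$ is cut out by $\tau(1)$. Hence they glue with trivial cocycle.
\item The identification $\Gamma(U_\sigma,\E)_u\cong\bigcap_{\rho\in\sigma(1)}E^\rho_{\langle u,\bv_\rho\rangle}$ gives both the natural isomorphism between $\E$ and the bundle rebuilt from its filtrations, and full faithfulness.
\end{itemize}

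Two things to tidy.

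\emph{Signs.} Your signs are inconsistent. In the paper's convention (decreasing filtrations; multiplication by $\chi^{u'}$ sends weight $u$ to weight $u-u'$), a weight-$u$ section of $\LL_{u'}$ over $U_\rho$ is $\chi^{u'-u}$ times the frame, not $\chi^{u-u'}$. Only then does your regularity criterion $\langle u'-u,\bv_\rho\rangle\ge 0$ follow.

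For the same reason, the weight-$u$ summand of $\Gamma_\sigma$ must carry $\chi^{-u}$, not $\chi^{u}$. As written, $b_{\sigma,j}\chi^{u}\in\Gamma_\sigma$ if and only if $u\in u_{\sigma,j}-\sigma^\vee$, so your module is $\bigoplus_j\k[(-\sigma^\vee)\cap M]\,\chi^{u_{\sigma,j}}b_{\sigma,j}$. That is not a $\k[\sigma^\vee\cap M]$-module, contrary to your displayed formula. With $\chi^{-u}$ you get $\bigoplus_j\k[\sigma^\vee\cap M]\,\chi^{-u_{\sigma,j}}b_{\sigma,j}$, and the localization and gluing steps go through verbatim. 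Also, ``$T$-invariant rational sections'' should read ``$T$-homogeneous rational sections''.

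\emph{The step you flagged.} The step you call the main obstacle needs no Hartogs argument. The splitting $\E|_{U_\sigma}\cong\bigoplus_j\LL_{u_{\sigma,j}}$ directly gives $\Gamma(U_\sigma,\E)_u\cong\Span\{b_{\sigma,j} : u_{\sigma,j}-u\in\sigma^\vee\}$. This equals $\bigcap_{\rho\in\sigma(1)}E^\rho_{\langle u,\bv_\rho\rangle}$, because all these subspaces are coordinate subspaces for the single basis $B_\sigma$.

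Faithfulness then follows: $\Gamma(U_\sigma,\E)$ is spanned by homogeneous sections, each determined by its value at $x_0$. So an equivariant morphism is determined by its fiber map at $x_0$.
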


Let $\widetilde{\mathcal{B}}(E)$ be the space of decreasing $\mathbb{R}$-filtrations in $E$. For a basis $B \subset E$, let $\widetilde{A}(B) \simeq \mathbb{R}^r$ be the space of decreasing $\mathbb{R}$- filtrations spanned by subsets of $B$.

\begin{definition}
        A \emph{piecewise linear map} $\Phi: |\Sigma|\to \widetilde{\mathcal{B}}(E)$ is a map satisfying:
    \begin{itemize}
        \item For any $\sigma\in \Sigma$, there is a basis $B$ of $E$ such that $\Phi(\sigma)\subseteq \widetilde{{A}}(B)$
        \item $\Phi\,|_{\sigma}: \sigma \to \widetilde{A}(B)$ is linear.
    \end{itemize}
\end{definition}

The following is from \cite{KM-TVBs-valuations}:
\begin{theorem}
    The category of toric vector bundles on $X_\Sigma$ is naturally equivalent to the category of integral
piecewise linear maps to $\widetilde{\mathcal{B}}(E)$, for all finite dimensional $\mathbf{k}$-vector spaces $E$.
\end{theorem}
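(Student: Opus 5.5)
The statement asserts that toric vector bundles on $X_\Sigma$ and integral piecewise linear maps $\Phi: |\Sigma| \to \widetilde{\B}(E)$ form equivalent categories. My plan is to route the equivalence through Klyachko's theorem. By that theorem it suffices to identify compatible systems of filtrations $(E^\rho_i)_{\rho\in\Sigma(1)}$ with integral piecewise linear maps to $\widetilde{\B}(E)$, and to show that morphisms match on both sides.

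The construction in one direction goes as follows. Given Klyachko data, I first define $\Phi$ on each ray by $\Phi(\bv_\rho) = (E^\rho_i)_{i}$, viewed as an $\R$-filtration by setting $F_t = E^\rho_{\lceil t\rceil}$. For a cone $\sigma$, the compatibility condition supplies a basis $B_\sigma$ and characters $u_{\sigma,j}$. I then define, for $x\in\sigma$,
\[
\Phi(x)_t = \Span\{ b_{\sigma,j} \mid \langle u_{\sigma,j}, x\rangle \ge t\}.
\]
This lies in $\widetilde A(B_\sigma)$, is linear in $x$ (the filtration attached to the weights $\langle u_{\sigma,j},x\rangle$ on $B_\sigma$), and agrees with the given filtrations at the rays. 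The main point to check is well-definedness: on a face $\tau\preceq\sigma$, or on $\sigma\cap\sigma'$, the two definitions must agree. To handle this, I note that a point $x$ in the relative interior of $\tau$ is a positive combination of the ray generators of $\tau$. The filtration $\Phi(x)$ is then determined intrinsically as the ``sum'' of the ray filtrations in any common adapted basis: $\Phi(x)_t = \sum_{\sum a_\rho i_\rho \ge t} \bigcap_\rho E^\rho_{i_\rho}$. This expression is independent of the chosen basis, because a basis adapted to all $E^\rho$, $\rho\in\tau(1)$, splits every intersection of the filtrations. Integrality on $N$ follows because the $u_{\sigma,j}$ lie in $M$.

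For the converse, I start from an integral $\Phi$ that is linear on each $\sigma$ into $\widetilde A(B)$. Linearity gives weights $\ell_j\in\sigma^\vee$-type linear functionals with $\Phi(x)=$ the filtration with weights $\ell_j(x)$ on $b_j$. Integrality forces $\ell_j\in M$ after restricting to $\Span\sigma$, possibly only modulo $\sigma^\perp$, which is all that is needed. Setting $E^\rho_i = \Phi(\bv_\rho)_i$ then recovers compatible filtrations. The two constructions are inverse to each other on objects. For morphisms, a linear map $f:E\to E'$ respects all the filtrations $E^\rho$ if and only if it respects each $\Phi(x)_t$, by the intersection/sum formula above. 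Hence the Hom sets coincide, giving the equivalence via Klyachko.

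The main obstacle I expect is the gluing and well-definedness step: showing that $\Phi$ defined cone by cone agrees on overlaps independently of the choice of adapted basis. I would resolve it with the basis-free formula for $\Phi(x)$ in terms of the ray filtrations, as described above.
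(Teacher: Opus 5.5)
Your proposal is correct and follows the route the paper itself indicates. The paper gives no proof of its own: it cites \cite{KM-TVBs-valuations} and remarks that the piecewise linear description is a reformulation of Klyachko's compatible filtrations, which is exactly the translation you carry out cone by cone, with morphisms matching via your sum-of-intersections formula.

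Two small points to tighten. First, for non-simplicial $\tau$ the coefficients $a_\rho$ are not unique; your formula is still independent of them, since with any adapted basis it evaluates to $\Span\{b_{\sigma,j}\mid \langle u_{\sigma,j},x\rangle\ge t\}$. Second, the weights $\ell_j$ are arbitrary linear functionals, not necessarily in $\sigma^\vee$. They are integral on $N\cap\Span(\sigma)$, which is saturated and generated by $\sigma\cap N$, so they lift to characters in $M$ that are unique modulo $\sigma^\perp\cap M$.
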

\begin{remark}
    The classification of toric vector bundles as piecewise linear maps is a reformulation of Klyachko data of toric vector bundles.
\end{remark}

\subsection{Matroids}
Throughout $\M$ denotes a matroid of rank $r$ with ground set $\G = \{1,\ldots,m\}$. 
Recall that $\{\e_1, \ldots, \e_m\}$ denotes the standard basis for $\R^\G = \R^m$ and for a subset $S \subseteq \G$ we put $\e_S = \sum_{j \in S} \e_j$. To the matroid $\M$, one associates two polyhedral fans as we recall below.

The \emph{matroid polytope} $P_\M$ is the convex hull of $\{\e_B \mid B \text{ is a basis in $\M$}\}$. One shows that every $\e_B$ is a vertex of $P_\M$. The polytope $P_\M$ lies in the hyperplane given by the coordinate sum equals to $r$ and has dimension $m-1$.

Recall that the \emph{outer normal fan} of a polytope $P \subset \R^m$ is a complete fan in $\R^m/L(P)^\perp$ where $L(P) \subset \R^m$ is the affine span of $P$.

\begin{definition}   
 The \emph{Gr\"obner fan} $\GF(\M) \subset \R^m/\R\e_{[m]}$ is the outer normal fan of the matroid polytope $P_\M$.

 The \emph{lifted Gr\"obner fan} $\widetilde{\GF}(\M)\subset \R^m$ is the preimage of $GF(\M)$ under the quotient map $\R^m\to \R^m/\R\e_{[m]}$.
\end{definition}

 By definition, the cones in the Gr\"obner fan $\GF(\M)$ are in one-to-one correspondence with the faces of $P_\M$. In particular, maximal cones in $\GF(\M)$ correspond to bases of $\M$.  Let $\sigma_F$ denote the face of $\GF(\M)$ corresponding to a face $F$ of the matroid polytope $P_\M$ and $\widetilde{\sigma}_F$ denote the corresponding cone in $\widetilde{\GF}(\M)$.  One shows that the bases of $\M$ corresponding to the vertices of $F$ define a matroid $\M_F$ (on the ground set $\M$) called the \emph{initial matroid} of $\M$ associated to $F$. 

The bases of $\M$ corresponding to the vertices of a face $F$ define a matroid called the \emph{initial matroid} associated to $F$.

For $i\in E$, let $\pi_i:\R^m \to \R$ be the projection onto the $i$-th coordinate. For a circuit $C$ and $w\in R^m$ let $\pi_C(w)=\min\{w_j\mid j \in C\}.$

Let $\sigma \in \widetilde{\GF}(\M)$ and take $w$
 in the relative interior of $\sigma$. Then for any circuit $C$, there are \emph{winner} coordinates in $w$, that is, $i\in C$ such that $w_i = \pi_C(w).$
The collection of winners for all possible circuits uniquely determines a cone $\sigma \in \widetilde{\GF}(\M)$ and the same relationship holds for $\GF(\M)$.

Next we recall the Bergman of $\M$. Let $\F=(F_1 \subsetneqq \cdots \subsetneqq F_k = \M)$ be a flag of flats of $\M$. We define the convex polyhedral cone $\widetilde{\sigma}_\F$ by: $$\widetilde{\sigma}_\F = \cone\{ \e_F \mid F \in \F \}\subset \R^m.$$
Let $\sigma_\F$ denote image of $\widetilde{\sigma}_\F$ under the projection $\R^m \to \R^m/\R\e_{[m]}.$
\begin{definition}
    The \emph{Bergman fan} $\Berg(\M)$ is the fan in $\R^m/\R\e_{[m]}
    $ consisting of the cones $\sigma_\F$ for all the flags of flats in $\M$. The \emph{lifted} Bergman fan $\widetilde{\Berg}(\M)$ is the fan in $\R^m$ consisting of the cones $\widetilde{\sigma}_\F$.
\end{definition}

A relationship between the two fans is the following

\begin{proposition} 
The support of $\widetilde{\Berg}(\M)$ is the support of a subfan of $\widetilde{\GF}(\M)$. The lifted Bergman fan consists of cones $\widetilde{\sigma} \in \GF(\M)$ such that  the initial matroid for corresponding faces $F$ in $P_\M$ are loop-free.
\end{proposition}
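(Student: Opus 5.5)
The plan is to characterize both fans through a single vector $w\in\R^m$ and its initial matroid $\M_w$. By definition of the outer normal fan, $w$ lies in the relative interior of the cone $\widetilde{\sigma}_F$ exactly when $F$ is the face of $P_\M$ on which $\langle w,\cdot\rangle$ is maximized. So $\M_w:=\M_F$ is the matroid whose bases are the bases $B$ of $\M$ of maximal weight $\sum_{j\in B}w_j$.

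\textbf{Step 1: the greedy description of $\M_w$.} Let $t_1>\cdots>t_k$ be the distinct values of $w$, and put $S_i=\{j : w_j\ge t_i\}$, with $S_0=\emptyset$ and $S_k=\G$. I would show
$$\M_w=\bigoplus_{i=1}^k (\M|S_i)/S_{i-1}.$$
The argument uses the greedy algorithm. A basis $B$ is $w$-maximal if and only if $|B\cap S_i|=\rank(S_i)$ for every $i$. Such $B$ decompose exactly as unions of bases of the minors $(\M|S_i)/S_{i-1}$. I expect this step to be the main technical point; the rest is bookkeeping.

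\textbf{Step 2: loop-freeness of $\M_w$.} An element $j\in S_i\setminus S_{i-1}$ is a loop of $(\M|S_i)/S_{i-1}$ exactly when $j\in\mathrm{cl}(S_{i-1})$. Hence $\M_w$ is loop-free if and only if $\mathrm{cl}(S_{i-1})\cap S_i=S_{i-1}$ for all $i$. Since the $S_i$ exhaust $\G$, this is equivalent to every $S_i$ being a flat.

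I will assume $\M$ is loopless. Otherwise the minimal flat $\mathrm{cl}(\emptyset)$ must be allowed as $F_1$, and the same argument goes through with $S_0$ replaced by $\mathrm{cl}(\emptyset)$.

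\textbf{Step 3: comparison with the Bergman fan.} The condition that every $S_i$ is a flat is exactly the condition that $w$ lies in $\widetilde{\sigma}_\F$ for the flag $\F=(S_1\subsetneqq\cdots\subsetneqq S_k=\G)$. Indeed, one writes
$$w=\sum_{i<k}(t_i-t_{i+1})\,\e_{S_i}+t_k\,\e_{\G},$$
in which the coefficients $t_i-t_{i+1}$ are positive. Conversely, a positive combination of the $\e_F$ over a flag has as its superlevel sets exactly the flats of that flag.

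The coefficient $t_k$ of $\e_\G$ may be negative. I will handle this either by working modulo $\R\e_{[m]}$, i.e. in $\Berg(\M)$ and $\GF(\M)$, and then pulling back, or by adopting the convention that the lifted cones contain the lineality line $\R\e_{[m]}$. This yields
$$|\widetilde{\Berg}(\M)|=\{w : \M_w \text{ loop-free}\}.$$

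\textbf{Step 4: this set is the support of a subfan.} Loop-freeness depends only on the Gröbner cone containing $w$ in its relative interior, so the set above is a union of relatively open Gröbner cones. It is closed under taking faces. A face of $\widetilde{\sigma}_F$ corresponds to a larger face $F'\supseteq F$ of $P_\M$, so $\M_{F'}$ has more bases than $\M_F$, and an element that is not a loop of $\M_F$ is still not a loop of $\M_{F'}$. Therefore the cones with loop-free initial matroid form a subfan of $\widetilde{\GF}(\M)$ whose support equals $|\widetilde{\Berg}(\M)|$.
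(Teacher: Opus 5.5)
Your proof is correct. The paper gives no proof of this proposition; it is quoted in the preliminaries as a known fact. Your argument is the standard one from the literature:
\begin{itemize}
\item decompose $\M_w=\bigoplus_i(\M|S_i)/S_{i-1}$ along the superlevel sets of $w$;
\item observe that $j\in S_i\setminus S_{i-1}$ is a loop of $\M_w$ if and only if $j\in\mathrm{cl}(S_{i-1})$;
\item identify ``all superlevel sets are flats'' with membership in a flag cone.
\end{itemize}

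What your self-contained route buys is that it brings out the conventions under which the statement is literally true. You handle these well, with one exception.

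You are right that the lifted Bergman cones must contain $\R\e_{[m]}$. With the paper's definition $\widetilde\sigma_\F=\cone\{\e_F\mid F\in\F\}$, only nonnegative multiples of $\e_{\G}$ occur, whereas every lifted Gr\"obner cone contains $-\e_{[m]}$.

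You are also right to conclude only an equality of supports. In general the flag cones strictly refine the loop-free Gr\"obner cones (the fine versus coarse subdivisions of Ardila--Klivans). For example, take the graphic matroid of $K_4$ minus an edge, and let $c$ be the chord of the remaining $4$-cycle. Then $\e_{\{c\}}$ spans a ray of the flag fan modulo $\R\e_{[m]}$. However, it lies in the relative interior of a Gr\"obner cone that is two-dimensional modulo $\R\e_{[m]}$, with initial matroid $U_{1,1}\oplus U_{1,2}\oplus U_{1,2}$. So the second sentence of the proposition can only hold at the level of supports, which is exactly what your Step 4 proves.

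The exception is your parenthetical remark on loops, which should be dropped rather than patched. A loop of $\M$ lies in no basis, so it is a loop of every initial matroid $\M_F$. Hence for a matroid with loops the subfan in the statement is empty, while the flag cones are not. Looplessness is therefore a genuine hypothesis of the proposition, not a simplifying convention, and the argument does not ``go through'' by replacing $S_0$ with $\mathrm{cl}(\emptyset)$.
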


In analogy with the classification of toric vector bundles in \cite{KM-TVBs-valuations}, we call a subset of the Bergman fan obtained by intersecting it with a maximal cone in $\GF(\M)$ an apartment.

\begin{definition}[Apartment in Bergman fan] \label{def-apartment-Bergman-fan}
Let $B \subseteq \G$ be a basis, and let $\widetilde{\sigma}_B$ be the corresponding maximal cone in $\widetilde{\GF}(\M)$. We define the \emph{apartment} $\widetilde{A}_B$ to be the intersection $$\widetilde{A}_B = \widetilde{\Berg}(\M) \cap \widetilde{\sigma}_B.$$
We also let $A_B$ be the image of $\widetilde{A}_B$ in $\Berg(\M)$
\end{definition}

\begin{remark}
    The notion of apartment is not new and has first introduced by Felipe Rinc\'{o}n under the name local tropical linear space (\cite{Rincon}). 
\end{remark}
The following can be found in \cite{KM-trop-vb}.
\begin{proposition}\label{prop-apartment-matroid}
Let $B$ be a basis in $\M$ with corresponding apartment $A_B$. We have the following:
\begin{itemize}
\item[(a)] $\widetilde{A}_B$ is a union of cones in the lifted Bergman fan.
\item[(b)] $\widetilde{A}_B$ is piecewise linearly isomorphic to $\R^r$.
\item[(c)] As a simplicial complex, $\widetilde{A}_B$ is isomorphic to the Coxeter complex of type $A_{r-1}$ where $r=\rank(\M)$. 
\end{itemize}
\end{proposition}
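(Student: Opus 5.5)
We prove the three parts in the order (a), (c), (b), since the piecewise linear homeomorphism in (b) will come from the explicit simplicial description in (c). Throughout, the basis $B=\{b_1,\ldots,b_r\}$ is fixed and we use the winner description of cones of $\widetilde{\GF}(\M)$. A vector $w$ lies in $\widetilde{\sigma}_B$ exactly when $B$ is a minimal-weight basis for $w$. Since the paper's convention uses $\min$ winners, minimality should be read with the sign convention fixed there. Equivalently, for every $j\notin B$ with fundamental circuit $C(j,B)$, the coordinate $w_j$ is a winner of $C(j,B)$, that is, $w_j \le w_b$ for all $b\in C(j,B)\setminus\{j\}$, again up to the sign convention.

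\emph{Part (a).} Let $\widetilde{\sigma}_\F$ be a cone of $\widetilde{\Berg}(\M)$. By the Proposition relating the two fans, $\widetilde{\sigma}_\F$ is a union of relatively open cones of $\widetilde{\GF}(\M)$. Each of these is either contained in $\widetilde{\sigma}_B$ or disjoint from its relative interior. We will show that whether $\relint\widetilde{\sigma}_\F$ meets $\widetilde{\sigma}_B$ depends only on a combinatorial condition on $\F$ relative to $B$, namely that every flat $F_i$ is spanned by $F_i\cap B$. If the condition holds, then all of $\widetilde{\sigma}_\F$ lies in $\widetilde{\sigma}_B$, and $\widetilde{A}_B$ is the union of these closed cones. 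To verify this, for $w=\sum c_i \e_{F_i}$ with $c_i>0$, the weight of $j$ is $\sum_{F_i\ni j}c_i$. Checking that $B$ is optimal reduces to the following: whenever $j\in F_i$, some element of $C(j,B)\setminus\{j\}$ must also lie in $F_i$. This holds because $F_i$ is a flat spanned by $F_i\cap B$, and this is exactly the basis-compatibility condition.

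\emph{Part (c).} A flag $\F$ satisfies the condition from (a) exactly when $F_i=\overline{S_i}$ for a chain $S_1\subsetneq\cdots\subsetneq S_k=B$ of subsets of $B$. Taking closures is injective on subsets of $B$, since $B$ is independent. So the cones of $\widetilde{A}_B$ correspond bijectively to chains of nonempty subsets of $B$ ending in $B$, with rays $\e_{\overline{S}}$ for $\emptyset\ne S\subseteq B$. This is the braid (Coxeter) fan of type $A_{r-1}$, lifted by the lineality direction $\e_{\overline{B}}=\e_{[m]}$. Face relations match on both sides by refinement of chains.

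\emph{Part (b).} We define a map $\R^B\cong\R^r\to\widetilde{A}_B$. It sends the braid cone $\cone\{\e_S : S\in\text{chain}\}$, with $\e_{[r]}$ taken with both signs, linearly to $\cone\{\e_{\overline S}\}$, with $\pm\e_{[m]}$, by sending $\e_S\mapsto\e_{\overline S}$. These linear pieces agree on overlaps because they are defined on rays. By (c) the map is a bijection on face posets, and it is linear and injective on each cone because the vectors $\e_{\overline S}$ along a chain are linearly independent. Hence it is a piecewise linear homeomorphism. Concretely, $w\mapsto(w_j)$ with $w_j=\min\{w_b: b\in C(j,B)\setminus\{j\}\}$ for $j\notin B$, up to sign convention.

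The main obstacle is the characterization step in (a). It requires showing that a point of $\widetilde{\Berg}(\M)$ lies in $\widetilde{\sigma}_B$ iff its flag consists of flats spanned by subsets of $B$, and in particular the converse: if some $F_i$ is not spanned by $F_i\cap B$, then some fundamental circuit has a strict non-winner. We would handle this using the basis exchange property applied to an element $j\in F_i\setminus\overline{F_i\cap B}$.
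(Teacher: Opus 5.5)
The paper gives no proof of this proposition; it quotes it from \cite{KM-trop-vb}, so there is no in-paper argument to compare against. Your outline is the standard argument and it works. It shows three things: $\widetilde{A}_B$ is the union of the cones $\widetilde{\sigma}_\F$ over flags adapted to $B$; adapted flags are exactly the closures of chains of nonempty subsets of $B$ (closure is injective there, since $\overline{S}\cap B=S$); and the conewise map $\e_S\mapsto\e_{\overline{S}}$ is the fundamental-circuit-minimum map, which is precisely the map $\Phi_\sigma$ the paper writes down later for the tautological bundle. Your sketch of the converse in (a) also goes through. Take $j\in F_i\setminus\overline{F_i\cap B}$. Then $j\notin B$, and some $b\in C(j,B)\setminus\{j\}$ has $b\notin F_i$. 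For $w=\sum_l c_l\e_{F_l}$ with $c_i>0$, nestedness of the flag gives $w_j-w_b\geq c_i>0$.

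Two statements need correcting. First, $\widetilde{\sigma}_B$ is where $B$ has \emph{maximal} weight (outer normal fan), and this sign is not a matter of convention. The exchange criterion you actually write, $w_j\le w_b$ for all $b\in C(j,B)\setminus\{j\}$, characterizes max-weight bases, and with min-weight cones the proposition is false. For $U_{2,3}$ and $B=\{1,2\}$, the min-weight cone of $B$ meets $\widetilde{\Berg}(U_{2,3})$ only in the half-plane $\R_{\geq 0}\e_{\{3\}}+\R\e_{[3]}$, which is not $\R^2$.

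Second, the reduction in (a) must read: whenever $j\in F_i$, \emph{every} element of $C(j,B)\setminus\{j\}$ lies in $F_i$. The version with ``some'' does not imply optimality. In $U_{3,4}$ with $B=\{1,2,3\}$ and the flag $\{1,4\}\subsetneq\G$, the ``some'' condition holds, yet $\{1,2,4\}$ has larger weight than $B$ for $\e_{\{1,4\}}$. Your justification in fact proves the ``every'' version, since $j\in\overline{F_i\cap B}$ forces $C(j,B)\subseteq(F_i\cap B)\cup\{j\}$.

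Some minor points:
\begin{itemize}
\item You are right to use the full line $\R\e_{[m]}$. The literal $\cone\{\e_F\}$ only contains the ray $\R_{\geq0}\e_{[m]}$ and cannot give $\R^r$.
\item Part (c) tacitly needs $\M$ loopless, so that $\overline{\emptyset}=\emptyset$ contributes no extra ray.
\item The detour through the subfan proposition is unnecessary. The greedy characterization ($B$ is max-weight for $w$ iff $B\cap\{w\geq k\}$ spans $\{w\geq k\}$ for all $k$) gives both directions of (a) at once, because the superlevel sets of $w\in\relint\widetilde{\sigma}_\F$ are exactly the flats of $\F$.
\item The coordinate projection $w\mapsto(w_b)_{b\in B}$ is a global linear left inverse of your map. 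This gives injectivity and the piecewise linear inverse in (b) immediately.
\end{itemize}
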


\subsection{Tropical vector bundles}  \label{subsec-trop-vb}
Let $\Sigma$ be a fan in $N_\R$ and let $\M$ be a matroid. For simplicity we assume $\Sigma$ is a complete fan. Following \cite{KM-trop-vb}, we have two equivalent definitions of a tropical vector bundle over a toric variety $X_\Sigma$.
\begin{definition}[Tropical vector bundle by Klyachko data] \label{def-matroid-vb-1}
A \emph{tropical vector bundle} $\E$ over the toric variety $X_\Sigma$ is the data of a family of decreasing $\Z$-filtrations of flats in a matroid $\mathcal{M}$ of rank $r$, $(F_i^\rho)_{i\in\Z}$ for each $\rho \in \Sigma(1)$ that satisfy the compatibility condition,
 \begin{itemize}
     \item For each $\sigma\in\Sigma$, there is a basis $B_\sigma =\{b_{\sigma,1},\ldots,b_{\sigma, r}\}$ in $\mathcal{M}$ and characters $u(\sigma)=\{u_{\sigma,1},\ldots,u_{\sigma,r}\}\subset M$ such that $F_i^\rho = \Span\{b_{\sigma,j} \,\vert\, \langle u_{\sigma,j},\bv_\rho\rangle \geq i\}$ for each $\rho\in \sigma(1)$.
 \end{itemize}
\end{definition}

 This is an analogue of the Klyachko filtrations for toric vector bundles. 

\begin{definition}[Tropical vector bundle as a piecewise linear map]  \label{def-matroid-vb-2}
A \emph{tropical vector bundle} $\E$ on $X_\Sigma$ is the data of a  map $\Phi: |\Sigma| \to \widetilde{\text{Berg}}(\mathcal{M})$ with the following property: 
\begin{itemize}
    \item For any cone $\sigma\in\Sigma$, there is a basis $B_\sigma$ in $\mathcal{M}$ such that $\Phi(\sigma)\subseteq \widetilde{A}_{B_\sigma}$

    \item $\Phi\,|_\sigma : \sigma \to \widetilde{A}_{B_\sigma}$ is linear.
\end{itemize}
\end{definition}

A point $w\in \Berg(\widetilde M)$ defines a decreasing filtration $(F_i^{(w)})_{i \in \Z}$ by:
$$F^{(w)}_i := \Span\{ e \in \G \mid w_e \geq i \}, \qquad i \in \Z.$$
Given a piecewise linear map $\Phi:|\Sigma|\to \widetilde{\Berg}(M)$, we apply this construction to each ray generator
$\bv_\rho$ and set
$F^\rho_i := F^{(\Phi(v_\rho))}_i.$
For every cone $\sigma$ there is a basis $B_\sigma$ with $\Phi(\sigma)\subset \widetilde{A}_{B_\sigma}$ and moreover $\Phi|_\sigma$ is linear.
From this it follows that the filtrations $(F^\rho_i)$, $\rho \in \sigma(1)$, satisfy Klyachko's compatibility condition in Definition \ref{def-matroid-vb-1}.

\begin{proposition}
Definitions \ref{def-matroid-vb-1} and \ref{def-matroid-vb-2} are equivalent.   
\end{proposition}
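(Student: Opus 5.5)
We need to prove that Definitions \ref{def-matroid-vb-1} and \ref{def-matroid-vb-2} are equivalent. The plan is to write down explicit constructions in both directions and check that they are mutually inverse.

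\emph{From Definition \ref{def-matroid-vb-2} to Definition \ref{def-matroid-vb-1}.} Given $\Phi$, set $F^\rho_i := F^{(\Phi(\bv_\rho))}_i$ as in the paragraph above. Fix a cone $\sigma$ with basis $B_\sigma$, so that $\Phi|_\sigma:\sigma\to\widetilde A_{B_\sigma}$ is linear.
\begin{itemize}
\item First I identify $\widetilde A_{B}$ concretely. For $w\in\widetilde A_B$, each coordinate $w_e$ with $e\notin B$ is determined by the $B$-coordinates: $w_e=\min\{w_b : b\in C(e,B)\setminus\{e\}\}$, where $C(e,B)$ is the fundamental circuit. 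This holds because $w$ lies in the cone $\widetilde\sigma_B$ of $\widetilde{\GF}(\M)$, where $B$ is the winning basis, and also in the Bergman fan, where the minimum on every circuit is attained twice.
\item Consequently the map $w\mapsto (w_b)_{b\in B}$ identifies $\widetilde A_B$ with $\R^B\cong\R^r$; this is Proposition \ref{prop-apartment-matroid}(b).
\item Next, for $w\in\widetilde A_B$ I check that $F^{(w)}_i=\Span\{b\in B: w_b\ge i\}$, where $\Span$ means closure in $\M$. The inclusion $\supseteq$ is clear. For $\subseteq$: if $w_e\ge i$, then every $b$ in the fundamental circuit of $e$ has $w_b\ge i$, so $e$ lies in the closure of $\{b: w_b\ge i\}$.
\item Linearity of $\Phi|_\sigma$ means each coordinate $b\mapsto\Phi(v)_b$ is the restriction of a linear functional $u_{\sigma,b}$ on $\sigma$. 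Integrality, which is implicit for $\Z$-filtrations, makes $u_{\sigma,b}\in M$ after extending from $\sigma$; extension is fine at least when $\sigma$ is simplicial or the cone is full-dimensional in its span.
\item This gives exactly the compatibility condition, with characters $u(\sigma)=\{u_{\sigma,b}\}$.
\end{itemize}

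\emph{From Definition \ref{def-matroid-vb-1} to Definition \ref{def-matroid-vb-2}.} Given compatible filtrations, I first define $\Phi$ on rays. The filtration $(F^\rho_i)$ determines the point $w^\rho\in\R^m$ with $w^\rho_e=\max\{i : e\in F^\rho_i\}$. Using the basis $B_\sigma$ for any $\sigma\ni\rho$, the third bullet above shows that $w^\rho$ is the point of $\widetilde A_{B_\sigma}$ with $B$-coordinates $\langle u_{\sigma,b},\bv_\rho\rangle$; in particular $w^\rho\in\widetilde{\Berg}(\M)$. On $\sigma$, define
$$\Phi(v)=\text{the point of }\widetilde A_{B_\sigma}\text{ with coordinates }\bigl(\langle u_{\sigma,b},v\rangle\bigr)_{b\in B_\sigma}.$$
This is linear on $\sigma$ and lands in $\widetilde A_{B_\sigma}$.

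\emph{Well-definedness on overlaps (the main obstacle).} One must show that on $\tau=\sigma\cap\sigma'$ the two definitions, using $B_\sigma$ and $B_{\sigma'}$, agree. Two reductions help:
\begin{itemize}
\item Replace $\sigma$ by the face $\tau$ itself, using the basis and characters $B_\sigma$, $u(\sigma)$ restricted to $\tau$. Both constructions then describe the filtrations $(F^\rho_i)_{\rho\in\tau(1)}$, so it suffices to show that for a single cone, $\Phi$ on $\tau$ depends only on these filtrations and not on the choice of adapted basis.
\item For $v=\sum_{\rho\in\tau(1)} c_\rho\bv_\rho$ with $c_\rho\ge0$, express $\Phi(v)_e$ intrinsically. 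I would try
$$\Phi(v)_e=\max_{\text{flags}}\ \sum_\rho c_\rho\, i_\rho\quad\text{subject to } e\in\bigcap_\rho F^\rho_{i_\rho}.$$
\end{itemize}
The point is that $\bigcap_\rho F^\rho_{i_\rho}$ is spanned by the $b\in B$ with $\langle u_b,\bv_\rho\rangle\ge i_\rho$ for all $\rho$; this holds because all these flats are spans of subsets of the single basis $B$. Hence the maximum computes $\min_{b\in C(e,B)\setminus\{e\}}\langle u_b,v\rangle$, which is the apartment formula. Since this expression uses only the filtrations, $\Phi$ is independent of the basis and well defined; I expect the care to be needed in justifying this max/min identity. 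Finally, the two constructions are visibly inverse on rays, and since a piecewise linear map is determined by its values on rays, the correspondence is bijective.
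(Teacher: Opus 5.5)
Your first direction, restricting to an apartment and checking $F^{(w)}_i=\Span\{b\in B: w_b\ge i\}$, is correct. It is essentially the sketch the paper gives just before the proposition; the paper gives no argument for the converse and defers to \cite{KM-trop-vb}.

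Your converse has a genuine gap at exactly the step you flagged: the max/min identity is false. By your own computation, for $e\notin B$ one has $e\in\bigcap_\rho F^\rho_{i_\rho}$ iff $i_\rho\le m_\rho:=\min_{b\in C(e,B)\setminus\{e\}}\langle u_b,\bv_\rho\rangle$ for all $\rho$. So your maximum equals $\sum_\rho c_\rho m_\rho$, a sum of minima. The apartment coordinate is instead $\min_b\sum_\rho c_\rho\langle u_b,\bv_\rho\rangle$, a minimum of sums. These differ whenever the minimizing $b$ changes from ray to ray. In effect your formula is the coordinatewise linear interpolation of the $\Phi(\bv_\rho)$ in $\R^m$, which generally leaves the Bergman fan.

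The paper's $\P^2$, $U_{2,3}$ example already shows this. On $\sigma_{12}$ with $B=\{1,2\}$ and $v=\bv_1+\bv_2$, the apartment gives $\Phi(v)=(1,1,1)$. But $3\in F^{\rho_1}_{i_1}\cap F^{\rho_2}_{i_2}$ forces $i_1,i_2\le 0$, so your formula returns $(1,1,0)\notin\widetilde{\Berg}(U_{2,3})$. Two steps rest on this formula and so remain unproved: well-definedness on overlaps, and your closing claim that a piecewise linear map is determined by its ray values (two maps with the same ray values may use different bases on a cone).

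Your strategy survives if you take spans of unions of the intersections rather than single intersections. For $v=\sum_\rho c_\rho\bv_\rho\in\tau$ put
\[
F^{(v)}_t=\Span\Bigl(\bigcup_{\sum_\rho c_\rho i_\rho\ge t}\ \bigcap_{\rho\in\tau(1)}F^\rho_{i_\rho}\Bigr),\qquad \Phi(v)_e=\sup\{t: e\in F^{(v)}_t\}.
\]
For any adapted pair $(B,u)$, all flats involved are spans of subsets of $B$. Three facts then suffice:
\begin{enumerate}
\item $\Span(S)\cap\Span(S')=\Span(S\cap S')$ for $S,S'\subseteq B$, by fundamental circuits;
\item $\Span\bigl(\bigcup_k\Span(S_k)\bigr)=\Span\bigl(\bigcup_k S_k\bigr)$;
\item $\langle u_b,\bv_\rho\rangle$ is an integer, so one may take $i_\rho=\langle u_b,\bv_\rho\rangle$.
\end{enumerate}
Together these give $F^{(v)}_t=\Span\{b\in B:\langle u_b,v\rangle\ge t\}$, hence $\Phi(v)_e=\min_{b\in C(e,B)\setminus\{e\}}\langle u_b,v\rangle$. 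This is now expressed purely in terms of the filtrations, which yields both well-definedness and determination by rays.

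A smaller point: your integrality caveat targets the wrong condition. What is needed is that $\Phi$ be integral on $N\cap\sigma$; then $u_{\sigma,b}$ extends to $M$, because $N$ intersected with the linear span of $\sigma$ is saturated. Integrality at ray generators alone, which is all that $\Z$-filtrations give, does not suffice for non-smooth cones, simplicial or not.
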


We will refer to either of the data in Definitions \ref{def-matroid-vb-1} or \ref{def-matroid-vb-2} as a \emph{tropical toric vector bundle} $\E$ with  bases $B_\sigma$ with compatible filtrations $(F_i^\rho)_{i\in \Z}$ for $\rho \in \sigma(1)$ and \emph{piecewise linear map} $\Phi$. 

Let $s = |\Sigma(1)|$ be the number of rays of $\Sigma$, then for $t_1, \ldots, t_s \in \Z$, the corresponding \emph{Klyachko flat} is the intersection $F^{\rho_1}_{t_1}\cap \cdots \cap F^{\rho_s}_{t_s} \subseteq \G$.

\begin{definition}[Diagram of a tropical toric vector bundle]  \label{def-Klyachko-flats}
We let $D_\Phi$ be the $s \times m$ integral matrix with rows $\Phi(\bv_\rho) \in \widetilde{\Berg}(\M)$ for $\rho \in \Sigma(1)$, where $m = |\G|$. We call the matrix $D_\Phi$ the \emph{diagram} of the tropical toric vector bundle determined by $\Phi$.
\end{definition}

\begin{proposition}
    If $\Phi$, $\Phi': |\Sigma| \to \widetilde{\Berg}(\M)$ are piecewise linear maps with $D_\Phi = D_{\Phi'}$, then $\Phi = \Phi'$. Let $\Sigma$ be a simplicial fan. If $D$ is any $s\times m$ integral matrix with rows in $\widetilde{\Berg}(\M)$ such that any rows corresponding to rays of a face $\sigma \in \Sigma$ lie in a common apartment of $\widetilde{\Berg}(\M)$, then $D$ determines a piecewise linear map $\Phi_D: |\Sigma| \to \widetilde{\Berg}(\M)$ satisfying Definition \ref{def-matroid-vb-2}. 
\end{proposition}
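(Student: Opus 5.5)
The statement has two parts: uniqueness of $\Phi$ given its diagram, and existence of $\Phi_D$ for a simplicial fan when the rows on each cone lie in a common apartment. I would treat them separately, using throughout that each apartment $\widetilde{A}_B$ sits inside the maximal cone $\widetilde{\sigma}_B$ of $\widetilde{\GF}(\M)\subset\R^m$. The map $\Phi|_\sigma:\sigma\to\widetilde{A}_{B_\sigma}$ is required to be linear, meaning it is the restriction to $\sigma$ of a linear map $N_\R\to\R^m$ whose image of $\sigma$ lands in $\widetilde{A}_{B_\sigma}$.

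\emph{Uniqueness.} Let $\Phi,\Phi'$ be piecewise linear maps with $D_\Phi=D_{\Phi'}$, and fix a cone $\sigma\in\Sigma$. Both $\Phi|_\sigma$ and $\Phi'|_\sigma$ are restrictions of linear maps $N_\R\to\R^m$. A cone is the nonnegative span of its ray generators $\bv_\rho$, $\rho\in\sigma(1)$. Any $x\in\sigma$ can therefore be written $x=\sum_{\rho\in\sigma(1)} c_\rho\bv_\rho$ with $c_\rho\ge 0$, which gives
$$\Phi(x)=\sum_{\rho\in\sigma(1)} c_\rho\Phi(\bv_\rho)=\sum_{\rho\in\sigma(1)} c_\rho\Phi'(\bv_\rho)=\Phi'(x).$$
This does not need the representation of $x$ to be unique, since both maps are linear on $\sigma$. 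As $\Sigma$ covers $|\Sigma|$, we conclude $\Phi=\Phi'$.

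\emph{Existence.} Now let $\Sigma$ be simplicial and let $D$ satisfy the apartment hypothesis.
\begin{enumerate}
\item For each cone $\sigma$, the generators $\{\bv_\rho\}_{\rho\in\sigma(1)}$ are linearly independent. So there is a unique linear map $L_\sigma:\Span(\sigma)\to\R^m$ with $L_\sigma(\bv_\rho)=D_\rho$, the row of $D$ indexed by $\rho$. Set $\Phi_D|_\sigma=L_\sigma|_\sigma$.
\item On a face $\tau\preceq\sigma$ we have $L_\sigma|_{\Span\tau}=L_\tau$, because both agree on the basis $\{\bv_\rho\}_{\rho\in\tau(1)}$. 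Since $\Sigma$ is a fan, $\sigma\cap\sigma'$ is a common face of $\sigma$ and $\sigma'$, so the local definitions glue to a well-defined continuous map $\Phi_D$ on $|\Sigma|$.
\item It remains to check that $\Phi_D(\sigma)\subseteq\widetilde{A}_{B_\sigma}$, where $B_\sigma$ is a basis whose apartment contains all rows $D_\rho$ with $\rho\in\sigma(1)$. The image $\Phi_D(\sigma)$ is $\cone\{D_\rho:\rho\in\sigma(1)\}$, and $\widetilde{A}_{B_\sigma}$ is not convex in general. So the real content is the claim that an apartment contains the convex cone generated by any finite subset of its points.
\end{enumerate}

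This convexity claim is the step I expect to be the main obstacle. My first approach is to describe the apartment explicitly through the lifted Gröbner fan. A point $w$ lies in $\widetilde{\sigma}_B$ iff $B$ is a maximal-weight basis for $w$. For such $w$, the loop-freeness condition defining $\widetilde{\Berg}(\M)$ means every element of $\G\setminus B$ is weakly dominated by its fundamental circuit. Concretely, for each $e\notin B$ with fundamental circuit $C(e,B)$, we need $w_e\le\min\{w_b: b\in C(e,B)\setminus\{e\}\}$, so that $e$ is a winner of that circuit. Under the identification of Proposition~\ref{prop-apartment-matroid}(b), this says $w_e=\min_{b\in C(e,B)\setminus e}w_b$. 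The coordinates $(w_b)_{b\in B}$ then range freely over $\R^r$, and the remaining coordinates are obtained by taking minima. Such min-formulas are concave functions, not linear ones, so a sum of points of the apartment need not lie in it. The hypothesis on $D$ alone therefore does not obviously force $\cone\{D_\rho\}\subseteq\widetilde{A}_{B_\sigma}$.

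To repair this, I would look for a common Coxeter chamber. I would require, or prove using the compatibility and the Coxeter complex structure of Proposition~\ref{prop-apartment-matroid}(c), that the rows $D_\rho$ for $\rho\in\sigma(1)$ lie in a single closed chamber of $\widetilde{A}_{B_\sigma}$. On a closed chamber, the order on the $B$-coordinates is fixed, so each minimum is attained at a fixed index, and the min-formulas become linear there. The chamber is then an honest cone $\widetilde{\sigma}_\F$ of $\widetilde{\Berg}(\M)$, it is convex, and it contains $\cone\{D_\rho\}$. In the intended reading, where a common apartment means common cones of the Bergman fan, this is immediate. This gives $\Phi_D(\sigma)\subseteq\widetilde{\sigma}_\F\subseteq\widetilde{A}_{B_\sigma}$ with $\Phi_D|_\sigma$ linear, as Definition~\ref{def-matroid-vb-2} requires.
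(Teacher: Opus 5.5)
Your proposal has a genuine gap, which comes from how you read ``linear'' in Definition~\ref{def-matroid-vb-2}. (The paper states this proposition without proof, since it is recalled from earlier work, so I am measuring your argument against what a complete proof needs.)

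You take $\Phi|_\sigma$ to be the restriction of a linear map $N_\R\to\R^m$. Under that reading the existence half is false, which is exactly the obstacle you ran into. Your repair does not prove the statement; it changes the hypothesis. Rows of a cone need not lie in a common closed chamber. In the paper's $U_{2,3}$ example on $\P^2$, the rows $(1,0,0)$ and $(0,1,0)$ of $\sigma_{12}$ both lie in $\widetilde{A}_{\{1,2\}}$, but in different chambers. Moreover $(1,1,0)\notin\widetilde{\Berg}(U_{2,3})$: its only maximal-weight basis is $\{1,2\}$, and $w_3\neq\min(w_1,w_2)$. Nor does ``common apartment'' mean a common cone; an apartment is a union of many cones (Proposition~\ref{prop-apartment-matroid}).

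The intended meaning is linearity with respect to the chart $\widetilde{A}_B\cong\R^B$ of Proposition~\ref{prop-apartment-matroid}(b). This is the chart you wrote down yourself: $w\mapsto(w_b)_{b\in B}$, with inverse $\iota_B(y)_e=\min_{b\in C(e,B)\setminus\{e\}}y_b$ for $e\notin B$. That is why the paper writes $\Phi_\E|_\sigma(x)=(\langle u_{\sigma,1},x\rangle,\dots,\langle u_{\sigma,r},x\rangle)$, and why in the example $\Phi(\bv_1+\bv_2)=\iota_{\{1,2\}}(1,1)=(1,1,1)$. With this reading your step~3 costs nothing. Set $\Phi_D|_\sigma=\iota_{B_\sigma}\circ\ell_\sigma$, where $\ell_\sigma\colon\Span(\sigma)\to\R^{B_\sigma}$ is the linear map with $\bv_\rho\mapsto D_\rho|_{B_\sigma}$. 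Its image lies in $\widetilde{A}_{B_\sigma}$ by construction, and $\Phi_D(\bv_\rho)=D_\rho$.

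What is then actually missing, in both halves, is compatibility of the charts on overlapping apartments.
\begin{itemize}
\item In the uniqueness part, $\Phi$ and $\Phi'$ may use different bases $B\neq B'$ on the same $\sigma$. Then ``linear'' refers to different charts, and your one-line argument no longer applies.
\item In the existence part, gluing along $\tau=\sigma\cap\sigma'$ is no longer automatic when $B_\sigma\neq B_{\sigma'}$.
\end{itemize}
Both reduce to the following lemma. If $p_1,\dots,p_k\in\widetilde{A}_B\cap\widetilde{A}_{B'}$ and $c_i\ge 0$, then
\[
\iota_B\Bigl(\sum_i c_ip_i|_B\Bigr)=\iota_{B'}\Bigl(\sum_i c_ip_i|_{B'}\Bigr).
\]
To prove it, let $z$ be the right-hand side and fix $e\in B\setminus B'$. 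Symmetric exchange gives $b'\in B'\setminus B$, depending only on $B,B',e$, such that $B-e+b'$ and $B'-b'+e$ are both bases. For each $p_i$, maximality of the weights of $B$ and $B'$ then forces $p_{i,e}=p_{i,b'}$. Also $b'\in C(e,B')$. Using $c_i\ge 0$, this gives
\[
\sum_i c_ip_{i,e}=\sum_i c_i\min_{b''\in C(e,B')\setminus\{e\}}p_{i,b''}\le z_e\le\sum_i c_ip_{i,b'}=\sum_i c_ip_{i,e}.
\]
Hence $z|_B=\sum_i c_ip_i|_B$. Next, $z(B)=\sum_i c_ip_i(B)=\sum_i c_ip_i(B')=z(B')$, so $B$ is also a maximal-weight basis for $z$. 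Therefore $z\in\widetilde{A}_B$, and $z=\iota_B(z|_B)$. Once this lemma is in place, your simplicial-coordinates construction and your uniqueness argument both go through.
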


\begin{definition}[Parliament of polytopes] \label{def-parliament}
    Let $\Phi: |\Sigma| \to \widetilde{\Berg}(\M)$ define a tropical toric vector bundle $\E$ with diagram $D$. 
    For each $e \in \G$ we let $P_{e} \subset M_\R$ be the Newton polytope of the divisor on $X_\Sigma$ defined by the $e$-th column of $D$: 
    \begin{equation} \label{equ-P-v(e)}
P_e = \{ y \in M_\R \mid \langle y, \bv_\rho \rangle \leq \pi_e(\Phi(\bv_\rho)),~\forall \rho \in \Sigma(1) \}.
\end{equation}
The \emph{parliament} of $\E$ is defined to be the collection of polyhedra $\{P_{e} \mid e \in \G\}$.
\end{definition}

\subsection{Sheaf of sections}

In this section, we review the sheaf of sections of a tropical vector bundle (\cite[Section 6]{KM-trop-vb}).
The sheaf of sections associates to each affine toric chart $U_\sigma$ and a character $u \in M$ a matroid $H^0(U_\sigma,\E|_{U_\sigma})_u$.

First we recall the case of toric vector bundles. Let $\E$ be a toric vector bundle over a complete toric variety $X_\Sigma$. For a character $u \in M$, we let  $H^0(X_\Sigma, \E)_u$ denote the $u$-weight space in the space of global sections $H^0(X_\Sigma, \E)$. Similarly, for any cone $\sigma \in \Sigma$, $H^0(U_\sigma, \E)_u$ denotes the $u$-weight space in the space of sections of $\E$ on the affine toric chart $U_\sigma$.

Following \cite{Klyachko}, we express these spaces in terms of the Klyachko spaces $E^\rho_j$ of $\E$. Let $\sigma$ be a maximal cone in $\Sigma$ and let $\sigma(1) = \{\rho_1, \ldots, \rho_s\}$, and let $\bv_i\in N$ be the ray generator of the ray $\rho_i$, then:

$$H^0(U_\sigma, \E\!\!\mid_{U_\sigma}) = \bigoplus_{\bt \in \Z^s} E^{\rho_1}_{t_1} \cap \cdots \cap E^{\rho_s}_{t_s}.$$ 

For a cone $\sigma \in \Sigma$ and $e \in E$ we can associate 
a polyhedron $P_{e, \sigma} \subset M_\R$ defined by:
$$
P_{e, \sigma} = \{ y \in M_\R \mid \langle y, \bv_\rho \rangle \leq \pi_e(\Phi(\bv_\rho)),~\forall \rho \in \sigma(1)\}.
$$
It follows from the definition that for any character $u \in M$ we have:
$$\dim H^0(U_\sigma, \E\!\!\mid_{U_\sigma})_u = \dim \{ e \in E \mid u \in P_{e, \sigma} \}$$
where $E$ here is the fiber over the torus fixed point. 

Similarly, for any $e \in E$, we define the polyhedron from the parliament of $\E$:
$$P_{e} = \{ y \in M_\R \mid \langle y, \bv_\rho \rangle \leq \pi_e(\Phi(\bv_\rho)),~\forall \rho \in \Sigma(1) \}.
$$
We also have $H^0(X_\Sigma, \E)_u = \bigcap_{\rho \in \Sigma(1)} E^\rho_{\langle u,\bv_\rho\rangle}$, so that:
$$\dim H^0(X_\Sigma, \E)_u = \dim(\bigcap_{\rho \in \Sigma(1)} E^\rho_{\langle u, \bv_\rho\rangle}) = \dim \{ e \in E \mid u \in P_{e} \}.$$

Now for a tropical vector bundle over a complete toric variety, the $u$-weight space of global sections $H^0(U_\sigma,\E|_{U_\sigma})_u$ and $H^0(X_\Sigma,\E)_u  $ are defined in analogy with toric vector bundle case. 

\begin{definition}[Sheaf of sections of a tropical vector bundle] \label{def-sheaf of section}
$$H^0(U_\sigma, \E\!\!\mid_{U_\sigma})_u = \bigcap_{\rho\in\sigma(1)}F^\rho_{\langle u,\bv_\rho\rangle}=\{e \in \G\mid u \in P_{e, \sigma}\},$$ 
and 
$$H^0(X_\Sigma, \E)_u =\bigcap_{\rho\in \Sigma(1)}F^\rho_{\langle u,\bv_\rho\rangle}= \{e \in \G\mid u \in P_{e}\}.   $$ 

    \end{definition}

 We denote the rank of the flat $H^0(U_\sigma, \E\!\!\mid_{U_\sigma})_u$ and $H^0(X_\Sigma, \E)_u$ by $h^0(U_\sigma, \E\!\!\mid_{U_\sigma})_u$ and $h^0(X_\Sigma, \E)_u$ respectively. 
\begin{remark} \label{h^0}
    Alternatively, following from Definition \ref{def-matroid-vb-1},
    $$H^0(U_\sigma, \E\!\!\mid_{U_\sigma})_u = \Span\{i\in B_\sigma \mid \langle u,\bv_\rho \rangle \leq \langle u_{\sigma,i},\bv_\rho\rangle, \forall \rho\in \sigma(1)\},$$
    and
$$h^0(U_\sigma, \E\!\!\mid_{U_\sigma})_u = \vert  \{i \in B_\sigma\mid \langle u,x\rangle \leq \langle u_{\sigma,i},x\rangle, \forall x\in\sigma\}\vert $$
\end{remark}

\begin{definition}[Equivariant Euler characteristic of a tropical vector bundle]   \label{def-equiv-Euler-matriod-vb}
For a character $u$ we define the Euler characteristic $\chi(X_\Sigma, \E)_u$ by:
\begin{align*}
\chi(X_\Sigma,\E)_u &= \sum_{\sigma \in \Sigma} (-1)^{\codim(\sigma)} \rank H^0(U_\sigma, \E\!\!\mid_{U_\sigma})_u,\\
&= \sum_{\sigma \in \Sigma} (-1)^{\codim(\sigma)} \rank\;\{e \in \G \mid u \in P_{e, \sigma} \}.    
\end{align*}
{Moreover, we let:
$$\chi(X_\Sigma,\E) = \sum_{u \in M} \chi(X_\Sigma,\E)_u.$$}
\end{definition}

\section{Khovanskii-Pukhlikov Hirzebruch-Riemann-Roch theorem for polytopes}  \label{sec-KhPukh}
In this section, we give a brief overview of Khovanskii-Pukhlikov theory of convex chains and their combinatorial Hirzebruch-Riemann-Roch formula for polytopes (\cite{Khovanskii-Pukhlikov-1,Khovanskii-Pukhlikov-2}).

Let $\mathcal{P}$ denote the set of all convex polytopes in the real vector space $M_\mathbb{R} = M \otimes \R$. For $P, Q \in \mathcal{P}$, we write the Minkowski sum of $P$ and $Q$ as $P \oplus Q  = \{x+y \mid x\in P, y\in Q\}$.

 \begin{definition}
        A \emph{convex chain} is a function, $\alpha : M_\mathbb{R} \to \mathbb{Z}$ of the form $\alpha = \displaystyle\sum_{i=1}^k n_i\mathbbm{1} _{P_i}$ where $n_i\in \mathbb{Z}$, $ P_i\in \mathcal{P}$ and $\mathbbm{1}_{P_i}$ denotes the indicator function of the polytope $P_i$. If each $P_i$ is a convex lattice polytope, we call the convex chain a \emph{lattice} convex chain. 
   
 \end{definition}
           We denote the additive group of convex chains (respectively lattice convex chains) by $Z(M_\R)$ (respectively by $Z(M)$). 
 
\begin{definition}
          For a convex chain, $\alpha = \displaystyle\sum_{i=1}^k n_i \mathbbm{1}_{P_i}$, the number $\displaystyle\sum_{i=1}^k n_i$ depends only on $\alpha$ and is called the \emph{degree} of $\alpha$.
 
\end{definition}

\begin{proposition} \label{prop-convoltuion}
        Minkowski summation of polytopes extends in a unique way
 to a bilinear operation on the group of convex chains.
 $$*: Z(M_\R)\times Z(M_\R) \to Z(M_\R), \,\,\mathbbm{1}_A *\mathbbm{1}_B= \mathbbm{1}_{A\,\oplus\, B}.$$

\end{proposition}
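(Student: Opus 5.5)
The plan is to define $*$ directly on generators and then show that it descends to the group $Z(M_\R)$. Let $\widetilde Z$ be the free abelian group on $\mathcal{P}$. Extending $(A,B)\mapsto \mathbbm{1}_{A\oplus B}$ bilinearly gives a bilinear map $\widetilde Z\times\widetilde Z\to Z(M_\R)$. There is a surjection $\widetilde Z\to Z(M_\R)$ sending $[P]$ to $\mathbbm{1}_P$. Uniqueness is then immediate, because the indicator functions $\mathbbm{1}_P$ generate $Z(M_\R)$ additively and a bilinear map is determined by its values on generators. Existence therefore amounts to one claim: if $\sum_i n_i\mathbbm{1}_{A_i}=0$ as a function on $M_\R$, then $\sum_i n_i \mathbbm{1}_{A_i\oplus B}=0$ for every polytope $B$. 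By symmetry this also handles the first argument.

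To prove this claim I would characterize the Minkowski product by a valuation-type formula that is manifestly linear in the function $\alpha$. The natural tool is the Euler characteristic integral $\int \cdot\, d\chi$ on the algebra generated by indicator functions of polytopes, and more generally of closed convex sets. Hadwiger and Groemer show that this is a well-defined linear functional on that algebra with $\int \mathbbm{1}_P\,d\chi=1$ for every nonempty polytope $P$. Taking this as given, for $x\in M_\R$ and polytopes $A,B$ we have
\[
\mathbbm{1}_{A\oplus B}(x)=\int \mathbbm{1}_A(y)\,\mathbbm{1}_{B}(x-y)\,d\chi(y),
\]
because $A\cap(x-B)$ is a polytope, nonempty exactly when $x\in A\oplus B$. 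The right-hand side is bilinear in the pair of functions $(\mathbbm{1}_A,\mathbbm{1}_B)$. This requires two things: the product of two indicator functions of polytopes is again an indicator of a polytope, and $y\mapsto \mathbbm{1}_B(x-y)$ is the indicator of $x-B$. So we may define
\[
(\alpha*\beta)(x)=\int \alpha(y)\beta(x-y)\,d\chi(y).
\]
This convolution is bilinear in the functions $\alpha,\beta$ themselves, hence well defined on $Z(M_\R)$, and it agrees with $\mathbbm{1}_{A\oplus B}$ on generators. It lands in $Z(M_\R)$ because bilinear expansion on generators gives $\sum n_im_j\mathbbm{1}_{A_i\oplus B_j}$.

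The main obstacle is the existence of the Euler characteristic functional $\int d\chi$ on $Z(M_\R)$, i.e. showing that $\sum n_i\mathbbm{1}_{P_i}=0$ forces $\sum n_i=0$. This is exactly the claim that the degree in the preceding definition is well defined, which the paper already takes for granted, so I may use it. Should a self-contained argument be needed, I would prove it by induction on $\dim M_\R$. In dimension $0$ it is trivial. For the inductive step, choose a generic linear functional $\ell$ and set $\chi(\alpha)=\sum_t\big(\chi(\alpha|_{\ell=t})-\lim_{s\to t^+}\chi(\alpha|_{\ell=s})\big)$. Each slice $\alpha|_{\ell=t}$ is a convex chain one dimension lower, so by induction its degree is well defined; the sum is finite. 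For a single polytope the only nonzero term is at $t=\max_P\ell$, where it equals $1$. Since the formula is visibly linear in $\alpha$, this gives well-definedness of the degree, and with it the proposition.
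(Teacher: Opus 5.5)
The paper gives no proof of this proposition. It is recalled as background from Khovanskii--Pukhlikov \cite{Khovanskii-Pukhlikov-1}, so there is no in-text argument to compare against. Your proof is correct and is essentially the standard construction: convolution with respect to the Euler characteristic, $(\alpha*\beta)(x)=\int\alpha(y)\beta(x-y)\,d\chi(y)$, equivalently the push-forward of $\alpha(y)\beta(z)$ along $(y,z)\mapsto y+z$.

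Its merit is that the formula is written in terms of the functions rather than their presentations. So uniqueness is trivial, and existence reduces to the single fact that $\deg$ is well defined on $Z(M_\R)$. The paper asserts this just before the proposition, and your slicing induction proves it anyway. In that induction, any nonzero $\ell$ works, so no genericity is needed. Also, $\mathcal{P}$ should be taken to consist of nonempty polytopes, as the paper tacitly does.

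Your slicing formula gives a bit more. For $\ell=\langle\cdot,x\rangle$, the jump $\deg(\alpha|_{\ell=c})-\lim_{s\to c^+}\deg(\alpha|_{\ell=s})$ equals $\sum_{i:\,\h_{P_i}(x)=c}n_i$, which is the coefficient of $[c]$ in $\Phi_\alpha(x)$. Hence the support function of Theorem~\ref{th-convex-chain-supp-function} is well defined on $Z(M_\R)$.

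An alternative would be to define $*$ by pulling back the pointwise product $[\h_A(x)]\cdot[\h_B(x)]=[\h_{A\oplus B}(x)]$ through that theorem. But this would additionally require injectivity of $\alpha\mapsto\Phi_\alpha$, and the theorem as stated in the paper already presupposes $*$. Your Euler-characteristic route avoids both the extra work and the circularity.
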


 The set $Z(M_\R)$ with the operations of $+$ and $*$ is called the \emph{algebra of convex chains}. The multiplicative identity is the function $\mathbbm{1}_{\{0\}}$. An invertible element of degree $1$ is called a \emph{virtual polytope}. We denote the multiplicative group of virtual polytopes by $\mathcal{P}^*$. In fact, $\mathcal{P}^*$ is an infinite dimensional real vector space.

\begin{proposition}
    For a polytope $P \subset M_\mathbb{R}$,
    $$(-1)^{\dim(P)} \mathbbm{1}_{\Int(P)} = \displaystyle\sum_{\Delta \in \Gamma(P)} (-1)^{\dim(\Delta)} \mathbbm{1}_\Delta $$ where  $\Gamma(P)$
    is the set of all faces of $P$.
\end{proposition}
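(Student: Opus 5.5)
The plan is to prove the identity pointwise: evaluate both sides at an arbitrary point $x \in M_\R$ and show that the values agree. Here $\Int(P)$ means the relative interior of $P$, and $\Gamma(P)$ consists of the nonempty faces of $P$, including $P$ itself.

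If $x \notin P$, then $x$ lies in no face of $P$ and not in $\Int(P)$, so both sides vanish. Now let $x \in P$. Since $P$ is the disjoint union of the relative interiors of its faces, there is a unique face $F \in \Gamma(P)$ with $x \in \relint F$. A face $\Delta$ contains $x$ if and only if $\Delta \supseteq F$. Thus the right-hand side at $x$ equals
$$\sum_{\Delta \in \Gamma(P),\, \Delta \supseteq F} (-1)^{\dim \Delta}.$$
The left-hand side at $x$ is $(-1)^{\dim P}$ if $F = P$ and $0$ otherwise. When $F = P$, the sum has the single term $\Delta = P$, so the two sides agree.

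The main step is the case $F \subsetneq P$, where we must show the sum above is zero. Use the standard fact that the interval $[F,P]$ in the face lattice of $P$ is isomorphic to the full face lattice of a polytope $Q$, the face figure (quotient polytope) $P/F$. This polytope has dimension $d = \dim P - \dim F - 1 \ge 0$, and under the isomorphism a face $\Delta \supseteq F$ corresponds to a face $G$ of $Q$ with $\dim \Delta = \dim F + 1 + \dim G$. Concretely, one can take $Q$ to be the intersection of $P$ with an affine subspace of complementary dimension meeting $\relint F$ in one point, projected along the span of $F$. Here $F$ itself corresponds to the empty face of $Q$, of dimension $-1$, and $P$ corresponds to $Q$. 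The sum then becomes
$$(-1)^{\dim F + 1}\Big( -1 + \sum_{G \in \Gamma(Q)} (-1)^{\dim G}\Big),$$
where the $-1$ is the contribution of the empty face of $Q$.

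By the Euler--Poincar\'e relation for the polytope $Q$, we have $\sum_{G \in \Gamma(Q)} (-1)^{\dim G} = 1$, since $Q$ is contractible. Hence the bracket vanishes. This works even when $d = 0$, that is, when $F$ is a facet: then $Q$ is a point and the sum is just $-1 + 1$. So the right-hand side is $0$ at every $x \in P \setminus \Int(P)$, which finishes the proof.

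I expect the only real obstacle to be justifying the face-figure isomorphism $[F,P] \cong \Gamma(Q) \cup \{\emptyset\}$ together with its dimension shift. If one prefers to avoid it, an alternative is to replace $P$ near $x$ by its tangent cone $C = \cone(P - x)$. Faces of $P$ containing $x$ correspond to faces of $C$, and $C$ splits as $\mathrm{lin}(F - x) \times C'$ with $C'$ pointed. The required alternating sum then reduces to an Euler characteristic computation for a bounded cross-section of $C'$, which is again a polytope, so the Euler--Poincar\'e relation finishes the argument in the same way.
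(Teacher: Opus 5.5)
Your proof is correct. The paper does not prove this proposition; it is quoted from Khovanskii--Pukhlikov as background, so there is no argument to compare against line by line. The nearest thing in the paper is Lemma~\ref{lemma-BG-polyhedron}, whose bounded case rests on exactly the input you use: $\sum_{G\preceq Q}(-1)^{\dim G}$ is the Euler characteristic of a nonempty polytope $Q$ and hence equals $1$. Your pointwise reduction is the standard proof. It takes the unique face $F$ with $x\in\relint F$, notes that the faces through $x$ are exactly the faces containing $F$, and shows $\sum_{F\subseteq\Delta\subseteq P}(-1)^{\dim\Delta}=0$ for $F\neq P$ via the face figure, its dimension shift, and the empty face contributing $-1$. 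You are also right that $\Int$ must be read as the relative interior; otherwise the identity fails for lower-dimensional $P$.

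One small slip: your ``concrete'' description of $Q$ is missing a step. Take an affine subspace $L\subseteq\mathrm{aff}(P)$ of complementary dimension through a point $x\in\relint F$. Then $P\cap L$ is a polytope of dimension $\dim P-\dim F=d+1$ with $x$ as a vertex. Its faces through $x$ correspond to $[F,P]$, but its full face lattice is larger. To get $Q$ you must still take the vertex figure at $x$, i.e.\ cut $P\cap L$ with a hyperplane of $L$ separating $x$ from the other vertices; the projection along the span of $F$ is then superfluous. Your tangent-cone alternative already contains this slicing step, as the bounded cross-section of the pointed cone $C'$, so that version is complete as written.
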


\begin{theorem}[Minkowski inversion]
       $$(\mathbbm{1}_P)^{-1} = (-1)^{\dim(P)}\mathbbm{1}_{\Int(-P)}= \displaystyle\sum_{\Delta \in \Gamma(-P)} (-1)^{\dim(\Delta)}\mathbbm{1}_\Delta.$$
\end{theorem}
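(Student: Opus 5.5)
The second equality is the preceding Proposition applied to the polytope $-P$, because $\dim(-P)=\dim P$, $\Int(-P)=-\Int(P)$ (relative interiors), and the faces of $-P$ are the sets $-\Delta$ for $\Delta\in\Gamma(P)$. So the real task is the first equality. Since $*$ is commutative and associative with identity $\mathbbm{1}_{\{0\}}$, inverses are unique. It therefore suffices to prove
$$\mathbbm{1}_P * (-1)^{d}\,\mathbbm{1}_{\Int(-P)} = \mathbbm{1}_{\{0\}}, \qquad d=\dim P .$$
The right-hand side of the theorem is a genuine convex chain by the Proposition, so this product makes sense.

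\textbf{Evaluation formula.} To compute products pointwise, I will use the Euler characteristic integral. I will use the following description of $*$:
$$(\alpha*\beta)(x)=\int \alpha(y)\,\beta(x-y)\,d\chi(y),$$
where $\chi$ is the Euler characteristic, the valuation equal to $1$ on nonempty compact convex sets. This formula is bilinear. On indicator functions it gives $\chi\big(A\cap(x-B)\big)$, which is $1$ exactly when $x\in A\oplus B$ and $0$ otherwise. Hence it agrees with $\mathbbm{1}_{A\oplus B}$, so by the uniqueness in Proposition \ref{prop-convoltuion} it computes $*$. With $x-\Int(-P)=x+\Int(P)$ we obtain
$$\big(\mathbbm{1}_P*(-1)^d\mathbbm{1}_{\Int(-P)}\big)(x)=(-1)^d\,\chi\big(P\cap(x+\Int P)\big).$$

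\textbf{Case analysis on $x$.} There are three cases.
\begin{itemize}
\item If $x=0$, the set is $\Int P$. This is a bounded relatively open convex set of dimension $d$, so its Euler characteristic is $(-1)^d$ (apply the Proposition to $P$ and evaluate with $\chi$). The value is therefore $1$.
\item If $x\neq 0$ and $x$ is not parallel to the affine span $L$ of $P$, then $P$ and $x+\Int P$ lie in distinct parallel affine planes. The intersection is empty and the value is $0$.
\item If $x\neq0$ is parallel to $L$, I will use Fubini for the Euler integral along lines parallel to $x$. Project $L$ along $\R x$.
\begin{itemize}
\item Over a point in the relative interior of the image of $P$, the fiber of $P$ is a closed segment $[a,b]$. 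The fiber of $x+\Int P$ is the open segment $(a+t,b+t)$, where $t=|x|>0$ in the fiber coordinate.
\item Their intersection is either empty or a half-open interval $(a+t,b]$; in both cases the Euler characteristic is $0$.
\item Over boundary points of the projection, the fiber of $\Int P$ is empty.
\end{itemize}
Summing over fibers, the Euler characteristic vanishes.
\end{itemize}
The product is therefore $\mathbbm{1}_{\{0\}}$, which proves the theorem.

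\textbf{Main obstacle.} The main obstacle is justifying the Euler-integral model of $*$ and the Fubini step rigorously. That means working in the o-minimal or polyhedral setting, where $\chi$ is additive on finite Boolean combinations of relatively open polyhedra and fibers over cells of a suitable polyhedral subdivision of the projection are constant in type. If I want to avoid Euler integration, I would instead verify the $x\neq0$ case combinatorially. Write $\mathbbm{1}_P*\mathbbm{1}_{-\Delta}=\mathbbm{1}_{P-\Delta}$ for each face $\Delta$, and show that $\sum_{\Delta\in\Gamma(P)}(-1)^{\dim\Delta}\mathbbm{1}_{P-\Delta}(x)=0$ for $x\neq0$. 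This holds because the set of faces $\Delta$ with $x\in P-\Delta$ forms a contractible subcomplex of the boundary complex of $P$, together with $P$ itself, whose Euler characteristic cancels.
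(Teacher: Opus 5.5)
The paper gives no proof of this statement. It is quoted as background from Khovanskii--Pukhlikov, so there is no in-paper argument to compare against.

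Your proof is correct. The reduction to $\mathbbm{1}_P*(-1)^d\mathbbm{1}_{\Int(-P)}=\mathbbm{1}_{\{0\}}$ is valid. So is the pointwise formula $(-1)^d\chi\bigl(P\cap(x+\Int P)\bigr)$. The three cases all go through, including the fibrewise half-open intervals $(a+t,b]$ of Euler characteristic $0$. Two small points need tightening:
\begin{enumerate}
\item[(i)] In the case $x=0$, evaluating the Proposition with $\chi$ only gives $(-1)^d\chi(\Int P)=\sum_{\Delta\in\Gamma(P)}(-1)^{\dim\Delta}$. You still need Euler's relation that this sum equals $1$, which is the bounded case of Lemma \ref{lemma-BG-polyhedron}.
\item[(ii)] Your fibre description needs justification. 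Use $\pi(\relint P)=\relint\pi(P)$ for the projection $\pi\colon L\to L/\R x$. Also use $\relint P\cap\pi^{-1}(q)=\relint\bigl(P\cap\pi^{-1}(q)\bigr)$ for $q\in\relint\pi(P)$. This also shows $a<b$ over such $q$.
\end{enumerate}

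The \emph{main obstacle} you flag is lighter than you fear and needs no o-minimal machinery:
\begin{itemize}
\item For a convex chain $\gamma=\sum n_i\mathbbm{1}_{Q_i}$, the Euler integral is just the degree $\sum n_i$. The paper already takes well-definedness of the degree for granted.
\item The function $y\mapsto\alpha(y)\beta(x-y)$ is again a convex chain, because $\mathbbm{1}_A\mathbbm{1}_B=\mathbbm{1}_{A\cap B}$.
\item Fubini along $\pi$ follows by linearity from the case of a single polytope $Q$. There the fibre degrees assemble to $\mathbbm{1}_{\pi(Q)}$, which has degree $1$.
\end{itemize}

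Your fallback combinatorial argument, however, is misstated. The faces $\Delta$ with $x\in P-\Delta$ are exactly the faces meeting $P-x$. That family is closed upward in the face poset, so it is not a subcomplex. The complementary family, the faces disjoint from $P-x$, is the subcomplex $\Gamma_0$. The alternating sum equals $1-\chi(\Gamma_0)$, so you would have to prove separately that $\chi(\Gamma_0)=1$. Since that sum also equals $(-1)^d\chi\bigl(\Int P\cap(P-x)\bigr)$, the cleanest route is back through your main Euler-integral computation.
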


We recall that a support function of a polytope $P$ is a piecewise linear function $\h_P :N_\mathbb{R} \to \mathbb{R}$,
$$\h_P(x) = \displaystyle \max_{u\in P} \langle u,x\rangle.$$

\begin{definition}
        The \emph{support function} of a convex chain $\alpha =\displaystyle\sum_{i=1}^k n_i \mathbbm{1}_{P_i}$ is the function 
    $$\Phi_\alpha: N_\mathbb{R} \to \mathbb{Z}[\mathbb{R}],  \,\,x\mapsto \displaystyle\sum_{i=1}^k n_i[\h_{P_i}(x)].$$ Here $\Z[\R]$ denotes the group ring of the additive group of $\R$, consisting of formal linear combinations of real numbers with integer coefficients.
    \end{definition}

\begin{theorem}  \label{th-convex-chain-supp-function}
    The association of a convex chain to its support function is an algebra isomorphism of $Z(M_\R) $ and the algebra of all multi-valued piecewise linear functions. Moreover, under this isomorphism, the subspace of virtual polytopes corresponds to the subspace of (one-valued) support functions.   
\end{theorem}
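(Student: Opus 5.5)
The plan is to build the map $\alpha\mapsto\Phi_\alpha$ on all of $Z(M_\R)$, check in turn that it is well defined, multiplicative, injective and surjective, and then identify the units of degree $1$. Throughout, a multi-valued piecewise linear function means a function $N_\R\to\Z[\R]$ of the form $x\mapsto\sum_i n_i[\h_i(x)]$, where each $\h_i$ is a difference of support functions of polytopes. The main tool is the Euler characteristic $\chi$, which is a valuation on finite unions of relatively open polyhedra; I will use it to read off the coefficients of $\Phi_\alpha(x)$ intrinsically from $\alpha$.

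\emph{Well-definedness and linearity.} Fix $x\in N_\R$ and $t\in\R$. Set $H^{\ge}_t=\{u:\langle u,x\rangle\ge t\}$ and $H^{>}_t=\{u:\langle u,x\rangle> t\}$, and define
$$c_{x,t}(\alpha)=\chi(\alpha\cdot\mathbbm{1}_{H^{\ge}_t})-\chi(\alpha\cdot\mathbbm{1}_{H^{>}_t}),$$
where $\chi$ is extended linearly to constructible functions. For a single polytope $P$, $P\cap H^{\ge}_t$ is a nonempty polytope exactly when $t\le \h_P(x)$, so its Euler characteristic is $1$ then and $0$ otherwise. Likewise $P\cap H^{>}_t$ is a nonempty convex set exactly when $t<\h_P(x)$. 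Hence $c_{x,t}(\mathbbm{1}_P)=1$ if $t=\h_P(x)$ and $0$ otherwise. By linearity,
$$\Phi_\alpha(x)=\sum_t c_{x,t}(\alpha)[t]$$
depends only on the function $\alpha$ and not on how it is written as $\sum n_i\mathbbm{1}_{P_i}$. So $\alpha\mapsto\Phi_\alpha$ is a well-defined additive map.

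\emph{Multiplicativity.} Since $\h_{A\oplus B}=\h_A+\h_B$ and $[a][b]=[a+b]$ in $\Z[\R]$, we get $\Phi_{\mathbbm{1}_A*\mathbbm{1}_B}=\Phi_{\mathbbm{1}_A}\Phi_{\mathbbm{1}_B}$. Bilinearity of $*$ (Proposition~\ref{prop-convoltuion}) then gives multiplicativity on all of $Z(M_\R)$. The unit is preserved because $\Phi_{\mathbbm{1}_{\{0\}}}=[0]$.

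\emph{Surjectivity.} By the Minkowski inversion theorem, $\mathbbm{1}_Q^{-1}=(-1)^{\dim Q}\mathbbm{1}_{\Int(-Q)}$ exists, and multiplicativity forces $\Phi_{\mathbbm{1}_Q^{-1}}=[-\h_Q]$. Therefore $[\h_P-\h_Q]=\Phi_{\mathbbm{1}_P*\mathbbm{1}_Q^{-1}}$. Every single-valued piecewise linear function of the relevant kind is such a difference $\h_P-\h_Q$, and sums of these give all multi-valued functions, so the map is onto.

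\emph{Injectivity (main obstacle).} Suppose $\Phi_\alpha=0$. Summing the coefficients $c_{x,s}$ over $s\ge t$ shows that $\chi(\alpha\cdot\mathbbm{1}_{H})$ vanishes for every closed half-space $H$, and a similar argument handles open half-spaces. I would then show that a constructible function with all these half-space Euler integrals zero must itself be zero. The planned route is induction on $\dim M_\R$, restricting to hyperplanes, to get $\chi(\alpha\cdot\mathbbm{1}_K)=0$ for every polyhedron $K$. Taking $K$ to be a tiny simplex around a point $u$, and then its relative interior and faces, recovers $\alpha(u)=0$ by inclusion–exclusion. Equivalently, this is the injectivity of the Schapira–Radon transform on polyhedral constructible functions. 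This is the step I expect to need the most care, in particular keeping track of the boundary terms.

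\emph{Virtual polytopes.} Units of degree $1$ in $Z(M_\R)$ correspond to units in the target with pointwise coefficient sum $1$. Since $\R$ is a torsion-free group, the units of $\Z[\R]$ are exactly $\pm[a]$. So a degree-$1$ unit must satisfy $\Phi(x)=[\h(x)]$ pointwise, i.e. it is single-valued. Conversely, each single-valued $[\h_P-\h_Q]$ is the image of the invertible degree-$1$ chain $\mathbbm{1}_P*\mathbbm{1}_Q^{-1}$, which establishes the second claim of Theorem~\ref{th-convex-chain-supp-function}.
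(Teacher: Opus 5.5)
The paper gives no proof of this theorem; it is quoted from Khovanskii--Pukhlikov, so your outline has to stand on its own. Its skeleton is sound: multiplicativity from $\h_{A\oplus B}=\h_A+\h_B$, surjectivity via Minkowski inversion (which forces $\Phi_{\mathbbm{1}_Q^{-1}}=[-\h_Q]$), and the identification of virtual polytopes through the trivial units $\pm[a]$ of $\Z[\R]$ and the augmentation.

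\textbf{Well-definedness is wrong as written.} For $\chi$ to extend linearly it must be the additive Euler characteristic. Then $\chi(\alpha\cdot\mathbbm{1}_{H^{\ge}_t})-\chi(\alpha\cdot\mathbbm{1}_{H^{>}_t})=\chi(\alpha\cdot\mathbbm{1}_{\{\langle\cdot,x\rangle=t\}})$ is a hyperplane-section Euler characteristic. For $\alpha=\mathbbm{1}_P$ it equals $1$ for \emph{every} $t\in[\min_P\langle\cdot,x\rangle,\h_P(x)]$. So $c_{x,t}(\mathbbm{1}_P)$ is not concentrated at $t=\h_P(x)$, and $\sum_t c_{x,t}(\alpha)[t]$ is not even a finite sum.

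The slip is ``$P\cap H^{>}_t$ is a nonempty convex set, so $\chi=1$''. That holds for the homotopy Euler characteristic, which is not a valuation on non-closed sets. Additivity instead gives $\chi(P\cap H^{>}_t)=\chi(P\cap H^{\ge}_t)-\chi(P\cap H^{=}_t)=0$ as soon as the hyperplane meets $P$ (compare $\chi((0,1])=0$).

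The repair is to replace the open half-space by a right limit. The quantity $f_x(t):=\chi(\alpha\cdot\mathbbm{1}_{H^{\ge}_t})$ is the degree of the convex chain $\sum_i n_i\mathbbm{1}_{P_i\cap H^{\ge}_t}$, hence depends only on $\alpha$. It equals $\sum_i n_i\mathbbm{1}[t\le\h_{P_i}(x)]$, and its jump $f_x(t)-\lim_{s\downarrow t}f_x(s)=\sum_{\h_{P_i}(x)=t}n_i$ is exactly the coefficient of $[t]$ in $\Phi_\alpha(x)$. With this definition, your later telescoping to half-space data is correct.

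\textbf{Injectivity is only planned, and the plan does not start.} To apply induction to $\alpha|_L$ for a hyperplane $L$, you need $\chi(\alpha\cdot\mathbbm{1}_{L\cap H})=0$ for half-spaces $H$. But $\mathbbm{1}_{L\cap H}$ is in general not a linear combination of indicators of half-spaces and hyperplanes of $M_\R$, so these values do not follow from your data by inclusion--exclusion. Obtaining them needs an Euler-integration (Fubini) argument, which is precisely the missing content. Citing injectivity of the Schapira--Radon transform is legitimate, but then that citation is the whole step.

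A direct argument using only $f_x$ works. Fix $u\in M_\R$, let $S\subset N_\R$ be the unit sphere, $n=\dim M_\R$, and set $g_u(x)=f_x(\langle u,x\rangle)=\sum_i n_i\mathbbm{1}[\langle u,x\rangle\le\h_{P_i}(x)]$. For a polytope $P$, the set $\{x\in S:\h_P(x)\ge\langle u,x\rangle\}$ is all of $S$ if $u\in P$. Otherwise it is the complement of a nonempty open spherically convex set, i.e.\ of an open $(n-1)$-cell with compactly supported Euler characteristic $(-1)^{n-1}$. Hence
$$\int_S g_u\,d\chi=\deg(\alpha)+(-1)^{n-1}\alpha(u).$$
Since $g_u$ and $\deg(\alpha)$ (the augmentation of $\Phi_\alpha(x)$) are read off from $\Phi_\alpha$, the condition $\Phi_\alpha=0$ forces $\alpha=0$.
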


For later use, we record the following classic theorem which represents the indicator function of a convex polytope $P$ as an alternating sum of indicator functions of cones at faces of $P$. For a proof see \cite{Beck-Haase-Sottile}.
\begin{theorem}[Brianchon-Gram]\label{BG} For polytope $P$,

$$\mathbbm{1}_P = \sum_{F\preceq P} (-1)^{\dim(F)}\mathbbm{1}_{C(F)}$$
where $C(F)$ is the tangent cone of the face $F$ of $P$, that is, $C(F)$ is the intersection of all the supporting half-spaces of $P$ having the face $F$ on their boundaries.
\end{theorem}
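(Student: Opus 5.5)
The plan is to prove the identity pointwise: fix $x \in M_\R$ and show that both sides take the same value at $x$. First I reduce to the case where $P$ is full-dimensional. Let $L$ be the affine span of $P$. Every supporting half-space family of a face includes the two half-spaces cutting out $L$, so each tangent cone $C(F)$ lies in $L$. Hence for $x \notin L$ both sides vanish. For $x \in L$ I may work inside $L$, where $P$ is full-dimensional; write $d=\dim P$.

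\emph{Case $x \in P$.} Every tangent cone contains $P$, so $x \in C(F)$ for every nonempty face $F$. The right-hand side is therefore $\sum_{F \preceq P} (-1)^{\dim F}$, summed over all nonempty faces including $P$ itself. By the Euler--Poincar\'e relation for the boundary complex $\partial P \cong S^{d-1}$, this sum equals $\chi(S^{d-1}) + (-1)^d = 1$. This matches $\mathbbm{1}_P(x)=1$.

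\emph{Case $x \in L \setminus P$.} Write $C(F)=\bigcap_{G \supseteq F} H_G^-$, where $G$ runs over the facets containing $F$ and $H_G^-$ is the closed half-space containing $P$. Call a facet $G$ \emph{visible} from $x$ if $x \notin H_G^-$. Then $x \in C(F)$ if and only if $F$ lies in no visible facet. So the right-hand side equals
$$\sum_{F \preceq P}(-1)^{\dim F} \;-\; \sum_{F \in \mathcal{V}_x}(-1)^{\dim F} \;=\; 1 - \chi(|\mathcal{V}_x|),$$
where $\mathcal{V}_x$ is the polyhedral subcomplex of $\partial P$ formed by the visible facets and all their faces. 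It thus suffices to show $\chi(|\mathcal{V}_x|)=1$.

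This is the main obstacle. I would first try a radial-projection argument. Every ray from $x$ that meets $P$ enters $P$ through exactly one point of $|\mathcal{V}_x|$, because $P$ is convex and the visible facets are exactly those seen from outside. Hence $|\mathcal{V}_x|$ is homeomorphic to a convex set: either the projection of $P$ from $x$ onto an affine hyperplane separating $x$ from $P$, or the convex cone $\cone(P - x)$ sliced by such a hyperplane. A convex set is contractible, so $\chi(|\mathcal{V}_x|) = 1$ and the right-hand side is $0 = \mathbbm{1}_P(x)$.

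The Euler relation used in the first case is itself the same fact applied to $\partial P$. As a backup, I would replace the topological step by a combinatorial one: use a line shelling of $\partial P$ starting with the visible facets, which shows that $\mathcal{V}_x$ is shellable and is a $(d-1)$-ball.
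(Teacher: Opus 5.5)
Your proof is correct. The paper does not prove this theorem; it states it as a classical result and refers to Beck--Haase--Sottile, so there is no internal proof to compare with. What you give is a complete, self-contained version of the standard pointwise argument. For $x\in P$, every tangent cone contains $P$, and Euler's relation gives $1$. For $x\notin P$, the faces with $x\notin C(F)$ are exactly the faces of facets visible from $x$, and the visible part of $\partial P$ has Euler characteristic $1$.

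Your justification of the key radial-projection step ("because $P$ is convex and the visible facets are exactly those seen from outside") is brief, but the claim is right and easy to make rigorous. Write $f_G(y)=\langle a_G,y\rangle-b_G$, and suppose the ray $x+tv$ first meets $P$ at $t_0>0$. Some facet inequality $f_G\le 0$ is violated for $t$ slightly below $t_0$. Since $t\mapsto f_G(x+tv)$ is affine, this forces $f_G(x)>0$ and $f_G(x+t_0v)=0$, so the entry point lies on a visible facet. Conversely, a visible facet met at some $s>t_0$ would give $f_G(x+t_0v)>0$, which is impossible.

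When writing it up, make the following explicit:
(i) In the reduction to $L$, tangent cones computed in $M_\R$ agree with those computed inside $L$.
(ii) The identity $C(F)=\bigcap_{G\supseteq F}H_G^-$ holds because the outer normal of any supporting hyperplane through $F$ lies in the cone spanned by the normals of facets containing $F$. In particular, the empty intersection gives $C(P)=L$.
(iii) The map from $|\mathcal{V}_x|$ to $\{v\in\cone(P-x):\ell(v)=1\}$, with $\ell$ a linear functional positive on $P-x$, is a continuous bijection from a compact space to a Hausdorff space, hence a homeomorphism.
(iv) Both Euler-characteristic steps use that the alternating face count of a polyhedral complex equals its topological Euler characteristic.

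You are right to sum only over nonempty faces. The empty face has "tangent cone" $P$ and would contribute $-\mathbbm{1}_P$. The line-shelling backup is unnecessary, and it would in any case require first moving $x$ into general position.
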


\begin{remark} \label{rem-Brianchon-Gram}
    If $P$ is a polytope with (outer) normal fan $\Sigma$, then there are constants $d_\rho$ for $\rho\in \Sigma(1)$ for which $P = \{u \in M_\R\,|\, \langle u , \bv_\rho\rangle \leq d_\rho,\forall \rho\in \Sigma(1)\}$.
    For a cone $\sigma \in \Sigma$, the tangent cone of the face $F$ of $P$ corresponding to $\sigma$, is given by: $C_\sigma = \{u \in M_R \mid \langle u , \bv_\rho\rangle \leq d_\rho,\forall \rho\in \sigma(1)\}$. Then, the Brianchon-Gram formula can be written as:
    $$\mathbbm{1}_P = \sum_{\sigma\in\Sigma}(-1)^{\codim (\sigma)} \mathbbm{1}_{C_\sigma}.$$

    In fact, the Brianchon-Gram theorem extends to virtual polytopes as well (\cite[\S 4, Proposition 2]{Khovanskii-Pukhlikov-1}). More precisely, let us fix a projective fan $\Sigma$. Let $\mathcal{P}^*(\Sigma)$ be the subgroup of $(\mathcal{P}^*, *)$ generated by all the indicator functions of the convex polytopes normal to $\Sigma$, that is, convex polytopes whose normal fan is $\Sigma$. In fact, $\mathcal{P}^*(\Sigma)$ is a finite dimensional real vector space. For $\alpha \in \mathcal{P}^*(\Sigma)$ with support function $\h$, we have:
    $$\alpha = \sum_{\sigma \in \Sigma} (-1)^{\codim(\sigma)} \mathbbm{1}_{C_{\sigma}}, $$
    where $C_\sigma$ is the cone defined by $C_{\sigma} = \{u\in M_\R \mid \langle u,\bv_\rho\rangle \leq \h(\bv_\rho), \,\forall \rho\in \sigma(1)\rangle\}$. 
\end{remark}

The vector space $\mathcal{P}^*(\Sigma)$ has natural coordinates corresponding to rays in $\Sigma$. Let $\bv_1,\ldots, \bv_s$ be the primitive vectors of rays of $\Sigma$. Let $\alpha$ be a virtual polytope with support function $\h_\alpha$. The piecewise linear function is uniquely determined by its values $z_i = \h_\alpha(\bv_i)$ for $ 1\leq i\leq s$. We refer to the $z_i$ as \emph{support numbers} of $\alpha$ and think of them as coordinates on the vector space $\mathcal{P}^*(\Sigma)$. For any $z=(z_1, \ldots, z_s)$ we denote the virtual polytope in $\mathcal{P}^*(\Sigma)$ with support numbers $z_i$ by $\alpha[z]$. 


\begin{definition} For a continuous function, $f:M_\R \to \R$,
    The \emph{integral over a convex chain} $\alpha$ is the number
    $$I_f(\alpha) = \int_{M_\R} \alpha(u)f(u) \,du.$$
    The \emph{lattice sum over a convex chain} $\alpha$ is the number
    $$S_f(\alpha) = \sum_{u\in M}\alpha(u)f(u).$$

If $f=1$, we denote $S_f(\alpha)$ and $I_f(\alpha)$ simply by $S(\alpha)$ and $I(\alpha)$ respectively.
\end{definition}

For a convex chain $\alpha$, a virtual polytope $\beta$ and the coordinate $z= (z_1,\ldots,z_s)$, we have a pair of maps 
$z \mapsto I_f(\alpha * \beta[z])$ and $z \mapsto S_f(\alpha*\beta[z])$. The following remarkable theorem of Khovanskii and Pukhlikov gives a connection between $S_f(\alpha)$ and $I_f(\alpha)$ (\cite{Khovanskii-Pukhlikov-2}). 

\begin{theorem}[Khovanskii-Pukhlikov]\label{Thm-Kh-Pu}
    In the above notation, for $c=(c_1,\ldots,c_s) \in \mathbb{Z}^s$, and a polynomial $f$,
    $$\Todd\left(\frac{\partial}{\partial z}\right) I_f(\alpha* \beta[z])\big|_{z=c} = S_f(\alpha *\beta[c]).$$
\end{theorem}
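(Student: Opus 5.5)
The plan is to reduce everything to exponential sums and integrals over simplicial lattice cones, and to use the fact that for a unimodular cone the Todd operator turns an exponential integral into an exponential lattice sum. I will assume, as the Todd operator $\Todd(\partial/\partial z)=\prod_i \frac{\partial_{z_i}}{1-e^{-\partial_{z_i}}}$ implicitly requires, that $\Sigma$ is a smooth projective fan. Both sides of the identity are linear in $\alpha$, and $\beta[z]$ depends only on the support numbers. So by bilinearity of $*$ (Proposition \ref{prop-convoltuion}) it suffices to treat $\alpha=\mathbbm{1}_P$ for a single lattice polytope $P$. It also suffices to treat $\beta[z]=\alpha[z]\in\mathcal{P}^*(\Sigma)$ itself, since $\beta[z]$ is a lattice translate of some $\alpha[z+c_0]$.

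\textbf{Step 1 (exponential version).} Replace the polynomial $f$ by $f_\xi(u)=e^{\langle u,\xi\rangle}$ for $\xi\in N_\R$ in a suitable open set. I will prove the identity $\Todd(\partial_z)I_{f_\xi}(\alpha*\beta[z])|_{z=c}=S_{f_\xi}(\alpha*\beta[c])$ as an identity of meromorphic functions of $\xi$. Both sides are analytic at $\xi=0$ once one sums over all cones, because the pieces together describe a bounded chain. The polynomial case then follows by taking the Taylor coefficients in $\xi$ at $0$, since $\Todd(\partial_z)$ commutes with $\partial_\xi$.

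\textbf{Step 2 (decomposition into cones).} Let $\Sigma'$ be a smooth common refinement of $\Sigma$ and of the normal fan of $P$. The support function of $P*\beta[z]$ is $\h_P+\h_{\beta[z]}$, which is linear on the cones of $\Sigma'$. By the virtual Brianchon--Gram formula (Remark \ref{rem-Brianchon-Gram}) applied in $\mathcal{P}^*(\Sigma')$, we get
\[
\mathbbm{1}_P*\beta[z]=\sum_{\sigma\in\Sigma'}(-1)^{\codim\sigma}\mathbbm{1}_{C_\sigma(z)}.
\]
For a maximal $\sigma$, the set $C_\sigma(z)$ is a unimodular cone whose apex $v_\sigma(z)$ is an affine function of $z$. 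The linear part of $v_\sigma(z)$ is determined by the cone of $\Sigma$ containing $\sigma$, and its constant part is a lattice vertex of $P$. For such a cone, with edge generators $w_1,\dots,w_n$ forming a lattice basis, the standard computations give
\[
I_{f_\xi}(C_\sigma(z))=e^{\langle v_\sigma(z),\xi\rangle}\prod_j\frac{1}{-\langle w_j,\xi\rangle},
\qquad
S_{f_\xi}(C_\sigma(c))=e^{\langle v_\sigma(c),\xi\rangle}\prod_j\frac{1}{1-e^{\langle w_j,\xi\rangle}}.
\]
The lower-dimensional $\sigma$ contribute analogously through their lineality spaces, or they cancel once one works with half-open cones.

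\textbf{Step 3 (matching Todd operators).} This is the main obstacle. The operator $\Todd(\partial/\partial z)$ acts on the support numbers of $\Sigma$, not on those of the finer fan $\Sigma'$. For a maximal cone $\sigma'$ of $\Sigma'$ lying inside a maximal cone $\sigma$ of $\Sigma$, we have $\partial_{z_\rho}$ acting on $e^{\langle v(z),\xi\rangle}$ as multiplication by $\langle m_\rho,\xi\rangle$, where the $m_\rho$ form the dual basis to the rays of $\sigma$. The cone $C_{\sigma'}$, however, has edges dual to $\sigma'$, not to $\sigma$.

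My first attempt is to group the cones $C_{\sigma'}$ by the maximal cone $\sigma$ of $\Sigma$ containing $\sigma'$. The inclusion--exclusion over the subdivision of $\sigma$ then assembles these into the Minkowski sum of the lattice polytope "$P$ localized at $\sigma$" with the unimodular cone dual to $\sigma$ at apex $v_\sigma(z)$. Write $Q_\sigma$ for the first of these. For this localized piece the exponential integral factors as
\[
\Big(\sum_{\text{lattice shifts}}\cdots\Big)\prod_\rho\frac{1}{-\langle m_\rho,\xi\rangle},
\]
and $\Todd(\partial_z)$ acts only on the $z$-dependent factor $e^{\langle v_\sigma(z),\xi\rangle}$, converting $\prod_\rho(-\langle m_\rho,\xi\rangle)^{-1}$ into $\prod_\rho(1-e^{\langle m_\rho,\xi\rangle})^{-1}$. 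The remaining factor coming from $P$ must be written as an exponential lattice sum rather than an integral. Because $P$ is a lattice polytope, one hopes that the chain $Q_\sigma*(\text{cone})$ is a signed sum of lattice translates of the one cone dual to $\sigma$, with translations by lattice points of $P$.

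If this factorization fails, I would instead invoke the local Euler--Maclaurin formula of Berline--Vergne. It expresses $S$ as a sum over faces of integrals with local Todd-type operators, and it is compatible with refinement of fans, so it allows the reduction to $\Sigma$. Finally, summing over $\sigma$ and using Brianchon--Gram for sums gives $S_{f_\xi}(\alpha*\beta[c])$, which completes Step 1 and hence the theorem.
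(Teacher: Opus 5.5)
There is a genuine gap, and it sits exactly at your Step 3. The factorization you hope for does not exist, because the identity is false for a lattice polytope $P$ whose normal fan is not refined by $\Sigma$.

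Take $\Sigma$ the fan of $\P^2$ (rays $e_1,e_2,-e_1-e_2$), $\alpha=\mathbbm{1}_{[0,e_1]}$, $f=1$, and $\beta[z]=\{u: u_1\le z_1,\ u_2\le z_2,\ -u_1-u_2\le z_3\}$. Put $Z=z_1+z_2+z_3$. Then $I(\alpha*\beta[z])=Z^2/2+Z$, a function of $Z$ alone, so $\Todd(\partial/\partial z)$ acts as $\Todd(d/dZ)^3=1+\tfrac32\,\frac{d}{dZ}+\frac{d^2}{dZ^2}+\cdots$ and yields $Z^2/2+\tfrac52 Z+\tfrac52$. At $c=0$ this is $5/2$, whereas $S(\alpha*\beta[0])=\#([0,e_1]\cap\Z^2)=2$. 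More generally, for integral $c$ with $C=c_1+c_2+c_3\ge 0$ the true count is $(C+1)(C+4)/2$, always $1/2$ less.

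The missing $1/2$ is the boundary correction for the bottom edge of the trapezoid $[0,e_1]+\beta[c]$. Its outer normal $-e_2$ is a ray of your refinement $\Sigma'$ lying in the interior of $\sigma=\cone(e_1,-e_1-e_2)$. Your localized chain $Q_\sigma*(\text{cone})$ is a translate of $[0,e_1]-\sigma^\vee=\{u_1\le 1,\ u_2\ge 0,\ u_1+u_2\ge 0\}$, which has a facet normal to $-e_2$. No operator in the variables $\partial/\partial z_\rho$, $\rho\in\Sigma(1)$, can produce that correction; with the four support numbers of $\Sigma'$ the Todd operator does return $2$. The Berline--Vergne fallback cannot rescue this either, since the target identity itself fails. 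The paper gives no proof here; it only quotes Khovanskii--Pukhlikov, and its phrase ``for a convex chain $\alpha$'' is broader than what holds.

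What is true, and what your Steps 1--2 already prove once you take $\Sigma'=\Sigma$, is the statement for $\Sigma$ smooth projective and $\alpha$ in the additive span of lattice virtual polytopes whose support functions are linear on the cones of $\Sigma$. Then $\alpha*\beta[z]$ is a signed sum of elements of $\mathcal{P}^*(\Sigma)$ with support numbers $z+a$, $a\in\Z^s$. For maximal $\sigma$, let $\{m_\rho\}$ be the basis dual to $\{\bv_\rho\}_{\rho\in\sigma(1)}$. The cone $C_\sigma(z)$ has apex $v_\sigma(z)=\sum_{\rho\in\sigma(1)}z_\rho m_\rho$ and edges $-m_\rho$. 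Hence $\Todd(\partial/\partial z)$ multiplies $\int_{C_\sigma(z)}e^{\langle u,\xi\rangle}du=e^{\langle v_\sigma(z),\xi\rangle}\prod_\rho\langle m_\rho,\xi\rangle^{-1}$ by $\prod_\rho \langle m_\rho,\xi\rangle/(1-e^{-\langle m_\rho,\xi\rangle})$. This gives exactly the lattice sum over $C_\sigma(c)$ at integral $c$. Summing with the virtual Brianchon--Gram formula of Remark \ref{rem-Brianchon-Gram} (cones containing lines contribute zero) and taking Taylor coefficients in $\xi$ finishes the proof, and Step 3 never arises.

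Note also a sign slip in your Step 3. With edges $-m_\rho$ the integral factor is $\langle m_\rho,\xi\rangle^{-1}$ and the output is $(1-e^{-\langle m_\rho,\xi\rangle})^{-1}$. Your pair $(-\langle m_\rho,\xi\rangle)^{-1}\mapsto(1-e^{\langle m_\rho,\xi\rangle})^{-1}$ is off by a factor $e^{\langle m_\rho,\xi\rangle}$.

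This restricted version is all that Corollary \ref{cor-HRR-trop-vb} needs. For smooth projective $\Sigma$, the split resolution of Theorem \ref{th-K-class-resolution} writes $\h_\E$ as a signed sum of single-valued support functions linear on the cones of $\Sigma$.
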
 


The motivation for the above theorem was the Hirzebruch-Riemann-Roch theorem for line bundles on a projective toric variety.

\begin{remark}
Khovanskii-Pukhlikov's results is actually more general and applies to quasi-polynomial, that is, a combination of polynomials and exponential functions. The Khovanskii-Pukhlikov Riemann-Roch is sometimes called a multi-dimensional exact Euler-Maclaurin formula.  
\end{remark}

\section{Euler characteristic of a tropical vector bundle}
In this section, we introduce two (equivalent) constructions for a convex chain associated to a tropical vector bundle $\E$ with piecewise linear map $\Phi_\E : |\Sigma| \to \widetilde{\Berg}(\M)$ which encodes the equivariant Euler characteristic of the tropical bundle. 

\subsection{Convex chain associated to a tropical vector bundle}  \label{subsec-cc-trop-vb} 
Throughout this section, let $\E$ be a rank $r$ tropical vector bundle with decreasing filtrations $(F_i^\rho)_{i\in\Z}$ for all $\rho \in \Sigma(1)$
 satisfying the compatibility condition. Let $u(\sigma) = \{ u_{\sigma,1},\ldots, u_{\sigma,r}\}$ be the multi-set of characters and $B_\sigma$ be a basis in $\mathcal{M}$ corresponding to $\sigma \in \Sigma$.
 
 \begin{definition}[Convex chain of a tropical vector bundle]\label{multi-valued support}
The \emph{(multi-valued) support function} of the tropical vector bundle, $\E$ is the map $\h_\E :|\Sigma| \to \Z[\R]$,
 $$\h_\E(x) = \sum_{i=1}^r [\langle u_{\sigma,i},x\rangle] \quad\text{for}\,\,x\in\sigma.$$
 We denote the convex chain corresponding to the support function of a tropical vector bundle, $\E$ by $\alpha_\E$.
 \end{definition}

\subsection{Construction of convex chain using a split resolution} \label{subsec-split-res}
In \cite{Klyachko}, for a toric vector bundle $\E$ on a smooth complete toric variety $X_\Sigma$, Klyachko constructs a resolution $0 \to \E \to \F_0 \to \cdots \to \F_n \to 0$ by split toric vector bundles on $X_\Sigma$, where $n = \dim(X_\Sigma)$. Recall that a toric vector bundle is split if it is equivariantly isomorphic to a direct sum of toric line bundles. 

In this section we extend the construction of this resolution to tropical vector bundles. Note that we do not yet have a notion of morphism of tropical vector bundles, so strictly speaking we cannot talk about a resolution of tropical vector bundles. Rather we construct a sequence of split tropical vector bundles $\F_k$, $k=0, \ldots, n$, such that the alternating sum of the equivariant $K$-classes of the $\F_k$ equals the equivariant $K$-class of $\E$ (Theorem \ref{th-K-class-resolution}). This then gives an alternative construction of the convex chain $\alpha_\E$ associated to a tropical vector bundle $\E$ (encoding the equivariant Chern roots of $\E$).


We begin by recalling the notion of equivariant $K$-class of a tropical vector bundle. Let $\E$ be a tropical vector bundle on $X_\Sigma$ given by a
piecewise linear map
$$\Phi_\E : |\Sigma| \to \widetilde{\Berg}(\M).$$
Following \cite{KM-trop-vb}, the equivariant $K$-class of $\E$ is the piecewise exponential function
$$K_{\E} : |\Sigma| \to \R$$
defined by
$$K_{\E} = \exp_{\M} \circ \Phi_\E,$$
where $\exp_{\M}$ denotes the sum of exponential functions on $\widetilde{\Berg}(\M)$, which on each apartment $\widetilde{A_\sigma} \simeq \R^r$ is given by
$$\exp(x_1,\dots,x_r) = \sum_{i=1}^r e^{x_i}.$$
Equivalently, for each maximal cone $\sigma \in \Sigma$, there exist characters $u_{\sigma,1},\dots,u_{\sigma,r}$ such that
$$\Phi_\E|_\sigma(x)=(\langle u_{\sigma,1},x\rangle,\dots,\langle u_{\sigma,r},x\rangle),$$
and hence for $x\in\sigma$
$$K_{\E}(x)=\sum_{i=1}^r \exp\big(\langle u_{\sigma,i},x\rangle\big).$$

The construction of the resolution depends on the choice of a piecewise linear function $f$. Let $f: |\Sigma| \to \R$ be a piecewise linear function such that $F^\rho_i = 0$ if $i> f(\bv_\rho)$. For each cone $\tau \in \Sigma$ let $f$ be given by the linear function $\langle u_{f,\tau}, \cdot \rangle$. For $0 \leq k \leq n$,
we will construct a tropical vector bundle $\F_k$. The filtrations associated to $\F_k$ are given by:
$$G^\rho_i =\bigoplus_{\codim(\sigma)=k}G_{\sigma,i}^\rho $$
where 
$$G_{\sigma,i}^\rho=\begin{cases}
    F_i^\rho &\quad\text{if}\,\,\rho \in \sigma(1)\\
    \G &\quad\text{if}\,\,\rho\not\in \sigma(1), f(\bv_\rho)\geq i\\
    0&\quad\text{if}\,\,\rho\not\in \sigma(1) , f(\bv_\rho)<i.
\end{cases}$$

In order for the $G^\rho_i$ to define a tropical vector bundle, we need to show that they satisfy the compatibility condition in Definition \ref{def-matroid-vb-1}. For $\sigma, \tau \in \Sigma$, let $u^\sigma(\tau)= \{u^\sigma_{\tau,1},\ldots,u
^\sigma_{\tau,r}\}$ be the multi-set of characters defined by the property:
$$\langle u^\sigma_{\tau,j},\bv_\rho\rangle = \begin{cases}
    \langle u_{f,\tau},\bv_\rho\rangle=f(\bv_\rho)&\quad\text{if}\,\, \rho \in \tau \setminus \sigma\\
    \langle u_{\sigma,j},\bv_\rho \rangle & \quad\text{if}\,\, \rho\in \tau \cap \sigma.
\end{cases}$$
The $u^\sigma_{\tau,j}$ exist because of the smoothness assumption of $\Sigma$.
Moreover, the equivariant $K$-class of $\F_k$ is then
$$K_{\F_k}(x) = \sum_{\codim(\sigma)=k}\big(\sum_{j=1}^r \exp(\langle u^\sigma_{\tau,j},x\rangle)\big).$$


\begin{proposition} \label{prop-F-k-compatible}
    In the above notation, the filtrations $G^\rho_{i}$ satisfy the compatibility condition with respect to the multi-sets of characters $u^\sigma(\tau)$ and define a split tropical vector bundle.
\end{proposition}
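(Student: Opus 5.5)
The plan is to verify the compatibility condition of Definition \ref{def-matroid-vb-1} one summand at a time, and then to observe that each summand admits a single global adapted basis, which gives splitness.

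First I fix the matroid. The filtrations $G^\rho_i$ live in the direct sum matroid $\bigoplus_{\codim(\sigma)=k}\M$, with one copy $\M^{(\sigma)}$ of $\M$ for each cone $\sigma$ of codimension $k$. A basis of a direct sum is a disjoint union of bases of the summands, and a direct sum of flats is a flat. So it suffices to prove the following for each summand $\sigma$ and each cone $\tau\in\Sigma$: there is a basis of $\M^{(\sigma)}$ and characters such that
\[
G^\rho_{\sigma,i}=\Span\{b_j \mid \langle u^\sigma_{\tau,j},\bv_\rho\rangle\ge i\}\quad\text{for all }\rho\in\tau(1).
\]
The basis for $\tau$ in the sum is then the union of these bases, and the characters are $u^\sigma(\tau)$ as $\sigma$ ranges over the codimension-$k$ cones.

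For a fixed summand $\sigma$, I take the basis $B_\sigma=\{b_{\sigma,1},\dots,b_{\sigma,r}\}$ coming from the compatibility datum of $\E$ at $\sigma$. This choice is the same for every $\tau$. The characters $u^\sigma_{\tau,j}$ exist by smoothness. The rays of $\tau$ are part of a $\Z$-basis of $N$, so their values can be prescribed independently: $f(\bv_\rho)$ for $\rho\in\tau(1)\setminus\sigma(1)$, and $\langle u_{\sigma,j},\bv_\rho\rangle$ for $\rho\in\tau(1)\cap\sigma(1)$. I then check the two cases.
\begin{itemize}
\item If $\rho\in\tau(1)\cap\sigma(1)$, then $\rho$ is a ray of $\sigma$. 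The compatibility of $\E$ at $\sigma$ gives $F^\rho_i=\Span\{b_{\sigma,j}\mid \langle u_{\sigma,j},\bv_\rho\rangle\ge i\}$, and this equals $G^\rho_{\sigma,i}$.
\item If $\rho\in\tau(1)\setminus\sigma(1)$, then every $j$ satisfies $\langle u^\sigma_{\tau,j},\bv_\rho\rangle=f(\bv_\rho)$. So the span is all of $B_\sigma$, whose closure is $\G$, when $f(\bv_\rho)\ge i$, and it is empty, hence the flat $0$, when $f(\bv_\rho)<i$. This matches the definition of $G^\rho_{\sigma,i}$.
\end{itemize}
I also check that each $G^\rho_{\sigma,i}$ is a decreasing $\Z$-filtration by flats. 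This is clear in both cases. The hypothesis $F^\rho_i=0$ for $i>f(\bv_\rho)$ is not needed for compatibility itself. It only guarantees that the constant filtrations on rays outside $\sigma$ are consistent with the $K$-class formula, so I would note it but not use it here.

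For splitness, I observe that for every ray $\rho\in\Sigma(1)$, the filtration $G^\rho_{\sigma,\bullet}$ is spanned by subsets of the one basis $B_\sigma$. This holds for rays of $\sigma$ by the compatibility of $\E$ at $\sigma$, and it holds trivially for the other rays, where the flats are $0$ or $\G$. So the summand indexed by $\sigma$ has a global adapted basis. Its piecewise linear map $\Phi$ lands in the single apartment $\widetilde A_{B_\sigma}\cong\R^r$ for all cones at once. Each coordinate is then an integral piecewise linear function, namely the support function of a toric line bundle, and the summand is the direct sum of the $r$ tropical line bundles indexed by $b_{\sigma,j}$. Taking the direct sum over all $\sigma$ shows that $\F_k$ is split.

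The main obstacle is making this last step precise. The paper has not formally defined a split tropical vector bundle or a direct sum of tropical vector bundles, so I would state the definition explicitly: there is a basis of the matroid adapted to all the filtrations simultaneously, equivalently $\Phi$ factors through one apartment. I would also check that forming the direct sum over the codimension-$k$ cones is compatible with the piecewise linear description, that is, the lifted Bergman fan of a direct sum is the product of the lifted Bergman fans.
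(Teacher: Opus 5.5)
Your proposal is correct and follows the paper's argument: a two-case check of each $\sigma$-summand on each chart $\tau$, a direct sum over the codimension-$k$ cones, and splitness read off from one adapted basis per summand. The one difference is that you use $B_\sigma$ on every chart, whereas the paper writes $B_\tau$. Your choice is the cleaner one: $u^\sigma_{\tau,j}$ is built from $u_{\sigma,j}$, which is paired with $b_{\sigma,j}$, so the paper's version tacitly needs $B_\tau$ re-indexed so that $u_{\tau,j}$ and $u_{\sigma,j}$ agree on $\sigma\cap\tau$, and your choice also directly gives the $\tau$-independent basis that the splitness claim uses.

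One small correction to your aside: the hypothesis $F^\rho_i=0$ for $i>f(\bv_\rho)$ plays no role in the $K$-class identity either. In Klyachko's setting its role is to make the diagonal map $\E\to\F_0$ exist, i.e.\ to guarantee $F^\rho_i\subseteq G^\rho_{\sigma,i}$ for all $\rho$ and $\sigma$.
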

\begin{proof}
Let $\sigma\in\Sigma$ with $\codim(\sigma)=k$ and $\tau\in \Sigma$ be a maximal cone.
Since $\E$ is a tropical vector bundle,
there exist a basis $B_\tau=\{b_{\tau,1},\dots,b_{\tau,r}\}$ and characters
$u(\tau)=\{u_{\tau,1},\dots,u_{\tau,r}\}\subset M$ such that for every $\rho\in\tau(1)$,
$$F_i^\rho=\Span\{ b_{\tau,j} \mid \langle u_{\tau,j}, \bv_\rho\rangle \geq i \}.$$
For $\rho\in\sigma(1)\cap\tau(1)$, the
$\sigma$-component of the filtration $(G^{\rho}_{\sigma,j})_j$ satisfies the
compatibility condition because the filtrations of $\E$ satisfy the condition.
For $\rho\in\tau(1)\setminus\sigma(1)$, by definition
$$\langle u^{\sigma}_{\tau,j}, \bv_\rho\rangle
=\langle u_{f,\tau}, \bv_\rho\rangle= f(\bv_\rho),$$
and hence
$$G^{\rho}_{\sigma,i}=
\begin{cases}
\G, & f(\bv_\rho)\geq i,\\
0, & f(\bv_\rho)< i.
\end{cases}$$
A direct check shows that for every $\rho\in\tau(1)$,
$$G^{\rho}_{\sigma,i} = \Span\{ b_{\tau,j} \mid \langle u^{\sigma}_{\tau,j}, \bv_\rho\rangle \geq i \}.$$
Taking the direct sum over all $\sigma$ with $\codim(\sigma)=k$, the flats
$G_i^\rho$ are spanned by subsets of the direct-sum basis. Thus, the
compatibility condition holds and the filtrations define a tropical vector bundle.

We note that by the compatibility condition, for a given $\sigma$, all the flats $G^\rho_{\sigma, i}$ are adapted to the same basis. This implies that the all the $G^\rho_i$ are adapted to the same bases and hence the tropical vector bundle $\F_k$ is split. 
\end{proof}
\begin{theorem}  \label{th-K-class-resolution}
Let $\E$ be a rank $r$ tropical vector bundle on a smooth projective $X_\Sigma$ with multiset of
characters $u(\tau)=\{u_{\tau,1},\dots,u_{\tau,r}\}$ on each cone $\tau\in\Sigma$, and let
$f:|\Sigma|\to \mathbb{R}$ be as in \S4.2. For $\sigma,\tau\in\Sigma$, let
$u^\sigma(\tau)=\{u^\sigma_{\tau,1},\dots,u^\sigma_{\tau,r}\}$ be the multiset of characters defined by
$$\langle u^\sigma_{\tau,j}, \bv_\rho\rangle=
\begin{cases}
f(\bv_\rho) & \text{if }\rho\in \tau(1)\setminus\sigma(1),\\
\langle u_{\sigma,j}, \bv_\rho\rangle & \text{if }\rho\in \tau(1)\cap\sigma(1).
\end{cases}$$
For the tropical vector bundles $\F_k$ constructed in \S4.2,
$$K_\E(x)=\sum_{\sigma\in\Sigma}(-1)^{\codim(\sigma)}\sum_{j=1}^r
\exp(\langle u^\sigma_{\tau,j},x\rangle)
=\sum_{k=0}^n(-1)^k K_{\F_k}(x).$$
\end{theorem}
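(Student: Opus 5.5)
Since both sides are piecewise exponential functions on $|\Sigma|$, I will fix a maximal cone $\tau$ and a point $x\in\tau$ and check the identity there. I read $\sum_{\sigma\in\Sigma}$ as running over those $\sigma$ whose characters $u^\sigma_{\tau,j}$ are relevant on $\tau$. Cones $\sigma$ not meeting $\tau$ only through faces contribute via $\sigma\cap\tau$, so I would reorganize by faces of $\tau$.

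The second equality is the easier one. By Proposition \ref{prop-F-k-compatible}, $\F_k$ is split, with characters on $\tau$ given by the union over $\codim(\sigma)=k$ of the multisets $u^\sigma(\tau)$. Hence
$$K_{\F_k}(x)=\sum_{\codim\sigma=k}\sum_j \exp\langle u^\sigma_{\tau,j},x\rangle,$$
and grouping $\sigma$ by codimension gives $\sum_k(-1)^kK_{\F_k}$.

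For the first equality I would work on the fixed smooth maximal cone $\tau$, with rays $\rho_1,\dots,\rho_n$ and dual basis $m_1,\dots,m_n$ of $M$. By compatibility there is one basis $B_\tau$ in which every face $\gamma\preceq\tau$ has characters agreeing with $u_{\tau,j}$ on the rays of $\gamma$. Hence for $\gamma=\sigma\cap\tau$,
$$\langle u^\sigma_{\tau,j},\bv_{\rho_i}\rangle=\begin{cases} a_{j,i} & i\in S,\\ f_i & i\notin S,\end{cases}$$
where $a_{j,i}=\langle u_{\tau,j},\bv_{\rho_i}\rangle$, $f_i=f(\bv_{\rho_i})$, and $S\subseteq\{1,\dots,n\}$ is the set of rays of $\gamma$. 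Writing $x=\sum x_i\bv_{\rho_i}$, we get
$$\exp\langle u^\sigma_{\tau,j},x\rangle=\prod_{i\in S}e^{a_{j,i}x_i}\prod_{i\notin S}e^{f_ix_i}.$$

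The key step is to show that the signed count over cones $\sigma$ with $\sigma\cap\tau=\gamma$ collapses. If the sum is meant over faces of $\tau$, the signs become $(-1)^{n-|S|}$. Then summing over all $S$ gives
$$\prod_{i=1}^n\bigl(e^{a_{j,i}x_i}-e^{f_ix_i}\bigr),$$
which is not $e^{\langle u_{\tau,j},x\rangle}$. So some additional input is needed: the hypothesis $F^\rho_i=0$ for $i>f(\bv_\rho)$, and possibly terms with $\exp$ evaluated only at lattice-relevant data, as in Klyachko's Koszul-type argument.

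I would therefore first settle the correct combinatorial interpretation. I would compare with Klyachko's resolution, where $\F_k=\bigoplus_{\codim\sigma=k}$ of bundles built from $\E$ restricted to the star of $\sigma$. The exactness there comes from the exactness of a Čech/Koszul-type complex over the cones containing a given face. The identity should then follow from the Euler-characteristic identity
$$\sum_{\sigma\supseteq\gamma}(-1)^{\codim\sigma}=\begin{cases}1 & \gamma=\tau\text{-type contributions survive},\\ 0 & \text{otherwise},\end{cases}$$
which holds because the star of $\gamma$ in a complete fan is a complete fan, or a sphere, whose alternating count of cones vanishes except in the top term.

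Concretely, the plan is to fix $x$ in the interior of $\tau$ and group $\sigma$ by $\gamma=\sigma\cap\tau$. Next, I would use the link of $\gamma$, which is a sphere of dimension $n-\dim\gamma-1$, to evaluate $\sum_{\sigma:\sigma\cap\tau=\gamma}(-1)^{\codim\sigma}$. This equals $(-1)^{n-\dim\gamma}$ times the reduced Euler characteristic of an open star, and it vanishes unless $\gamma=\tau$. The surviving term is then exactly $\sum_j e^{\langle u_{\tau,j},x\rangle}=K_\E(x)$. This topological Euler-characteristic vanishing is the main obstacle, and I would prove it by induction on $\dim\gamma$ via inclusion–exclusion on faces of $\tau$.
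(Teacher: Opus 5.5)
Your final plan is correct, and it takes a different route from the paper's. The plan is to fix $x\in\tau$, group the cones $\sigma$ by $\gamma=\sigma\cap\tau$, and show that
$c_\gamma:=\sum_{\sigma\cap\tau=\gamma}(-1)^{\codim\sigma}=\delta_{\gamma,\tau}$.

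The paper evaluates only at the ray generators $\bv_\rho$, $\rho\in\tau(1)$. There the split of $\Sigma$ is binary ($\rho\in\sigma$ or not), and $\sum_{\sigma\ni\rho}(-1)^{\codim\sigma}=\sum_{\sigma\in\Sigma}(-1)^{\codim\sigma}=1$ cancels the $f$-terms and leaves $K_\E(\bv_\rho)$. It then extends to all of $\tau$ by asserting that, $\Sigma$ being smooth, $\sum_j\exp\langle u_j,x\rangle$ is determined by its values on the rays of $\tau$. That assertion fails in general. If $m_1,m_2$ are dual to $\bv_1,\bv_2$, the multisets $\{m_1,m_2\}$ and $\{m_1+m_2,0\}$ give the same function $e^t+1$ along $t\bv_1$ and along $t\bv_2$. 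But at $\bv_1+\bv_2$ they give $2e\neq e^2+1$.

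Your argument works at every $x\in\tau$. Since $\sum_j\exp\langle u^\sigma_{\tau,j},x\rangle$ depends on $\sigma$ only through $\gamma$, the claim $c_\gamma=\delta_{\gamma,\tau}$ gives the identity as functions on $\tau$, and it shows the identity holds for any $f$. So your route gives a complete proof on $\tau$. The paper's route is shorter but, as written, only verifies the identity at the $\bv_\rho$. Your treatment of the second equality matches the paper's.

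You should clean up three points.

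First, drop the detour. The sum runs over all of $\Sigma$, not just faces of $\tau$. Neither the hypothesis $F^\rho_i=0$ for $i>f(\bv_\rho)$ nor any Koszul-type input is needed for this identity; that hypothesis only serves to make the $\F_k$ genuine tropical vector bundles.

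Second, your displayed star identity is misstated. For every face $\gamma\preceq\tau$ one has $\sum_{\sigma\supseteq\gamma}(-1)^{\codim\sigma}=1$, never $0$, because the cones containing $\gamma$ form a complete fan in $N_\R/\Span(\gamma)$ with the same codimensions. Since $\sigma\supseteq\gamma$ if and only if $\sigma\cap\tau\supseteq\gamma$, this says $\sum_{\gamma\preceq\gamma'\preceq\tau}c_{\gamma'}=1$ for all $\gamma\preceq\tau$. Downward induction (M\"obius inversion on the Boolean face lattice of $\tau$) then gives $c_\tau=1$ and $c_\gamma=0$ otherwise. No link or open-star topology is needed.

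Third, the identification $\langle u^\sigma_{\tau,j},\bv_{\rho_i}\rangle=a_{j,i}$ for $i\in S$ holds only after permuting $j$. For $\sigma$ not a face of $\tau$ it does not follow from $B_\tau$ alone. Justify it as follows. Put $y=\sum_{i\in S}x_i\bv_{\rho_i}\in\gamma$, so that $\sum_j\exp\langle u^\sigma_{\tau,j},x\rangle=e^{\sum_{i\notin S}f_ix_i}\sum_j e^{\langle u_{\sigma,j},y\rangle}$. The multiset $\{\langle u_{\sigma,j},y\rangle\}_j$ is determined by the point $\Phi(y)$, since it records the rank jumps of the filtration $F^{(\Phi(y))}$. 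Hence it equals $\{\langle u_{\tau,j},y\rangle\}_j$. The paper uses the same fact when it pulls $K_\E(\bv_\rho)$ out of the sum over cones containing $\rho$.
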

\begin{proof}
Fix a cone $\tau\in\Sigma$. For a ray $\rho\in \tau(1)$, let $\Sigma(\rho)$ be the set of cones of $\Sigma$ that contain $\rho$.
Evaluating the alternating sum at the primitive generator $\bv_\rho$, we have
$$\sum_{k=0}^n (-1)^k K_{\F_k}(\bv_\rho)
=\sum_{\sigma\in\Sigma}(-1)^{\codim(\sigma)}\sum_{j=1}^r
\exp(\langle u^\sigma_{\tau,j}, \bv_\rho\rangle).
$$
Split the sum into $\sigma\in\Sigma(\rho)$ and $\sigma\notin\Sigma(\rho)$:
$$=\sum_{\sigma\in\Sigma(\rho)}(-1)^{\codim(\sigma)}\sum_{j=1}^r
\exp(\langle u^\sigma_{\tau,j}, \bv_\rho\rangle)
+\sum_{\sigma\notin\Sigma(\rho)}(-1)^{\codim(\sigma)}\sum_{j=1}^r
\exp(\langle u^\sigma_{\tau,j}, \bv_\rho\rangle).
$$
If $\sigma\in\Sigma(\rho)$ then $\rho\in\sigma(1)\cap\tau(1)$, hence
$\langle u^\sigma_{\tau,j}, \bv_\rho\rangle
=\langle u_{\sigma,j}, \bv_\rho\rangle$.
If $\sigma\notin\Sigma(\rho)$ then $\rho\in\tau(1)\setminus\sigma(1)$, hence
$\langle u^\sigma_{\tau,j}, \bv_\rho\rangle=f(\bv_\rho)$ for all $j$.
Therefore
$$\sum_{k=0}^n (-1)^k K_{\F_k}(\bv_\rho)=
\Big(\sum_{\sigma\in\Sigma(\rho)}(-1)^{\codim(\sigma)}\Big)\,K_\E(\bv_\rho) +
\Big(\sum_{\sigma\notin\Sigma(\rho)}(-1)^{\codim(\sigma)}\Big)\,
r\,\exp(f(\bv_\rho)).$$
Using the inclusion--exclusion identities for the posets $\Sigma(\rho)$ and $\Sigma$,
$$\sum_{\sigma\in\Sigma(\rho)}(-1)^{\codim(\sigma)}=1,
\qquad
\sum_{\sigma\in\Sigma}(-1)^{\codim(\sigma)}=1,$$
hence
$$\sum_{k=0}^n (-1)^k K_{\F_k}(\bv_\rho)=K_\E(\bv_\rho)
\qquad\text{for every }\rho\in\tau(1).$$

Since $\Sigma$ is smooth, $\sum_j \exp(\langle u_j,x\rangle)$ is determined by its values on the rays of $\tau$, and the identity holds for all $x\in\tau$.
\end{proof}

Motivated by Theorem \ref{th-K-class-resolution}, we give an alternative way to associate a multi-valued support function and hence a convex chain to a tropical vector bundle $\E$ as follows. We know that each $\F_k$ is split and hence can be written as a sum of toric line bundles $\LL_{i,k}$. Let $\h_{i,k}$ be the (one-valued) support function of $\LL_{i, k}$. Define the multi-valued support function $\h_\E: |\Sigma| \to \Z[\R]$ by:
\begin{equation}  \label{equ-alt-h-E}
\h_\E(x) = \sum_{k=0}^r \sum_i (-1)^k [h_{k, i}(x)].    
\end{equation}

We then let $\alpha_\E:M_\R \to \R$ to be the convex chain associated to the multi-valued support function $\h_\E$ (see Theorem \ref{th-convex-chain-supp-function}).

\begin{corollary}
The convex chain $\alpha_\E$ coincides with the convex chain defined in Definition \ref{multi-valued support}.    
\end{corollary}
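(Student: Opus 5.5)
The plan is to reduce the corollary to Theorem \ref{th-K-class-resolution}, using Theorem \ref{th-convex-chain-supp-function} to pass from support functions to convex chains. By that theorem, the map sending a convex chain to its multi-valued support function is an algebra isomorphism. So two convex chains coincide exactly when their multi-valued support functions coincide as functions $|\Sigma| \to \Z[\R]$. It therefore suffices to show that the function in \eqref{equ-alt-h-E} equals the function in Definition \ref{multi-valued support}. The comparison can be done one cone at a time: fix a maximal cone $\tau$ and compare the two functions on $\tau$.

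On $\tau$, Definition \ref{multi-valued support} gives $\sum_{j=1}^r [\langle u_{\tau,j},x\rangle]$. For the other side, the proof of Proposition \ref{prop-F-k-compatible} shows that $\F_k$ restricted to $\tau$ is a sum of line bundles indexed by pairs $(\sigma,j)$ with $\codim(\sigma)=k$. The line bundle indexed by $(\sigma,j)$ has linear support function $\langle u^\sigma_{\tau,j},\cdot\rangle$ on $\tau$. Hence \eqref{equ-alt-h-E} on $\tau$ is
\[
\sum_{\sigma\in\Sigma}(-1)^{\codim(\sigma)}\sum_{j=1}^r [\langle u^\sigma_{\tau,j},x\rangle].
\]
(The outer index in \eqref{equ-alt-h-E} should run over $k=0,\ldots,n$.)

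The key step is to verify the identity
\[
\sum_{\sigma}(-1)^{\codim(\sigma)}\sum_j [\langle u^\sigma_{\tau,j},x\rangle]=\sum_j[\langle u_{\tau,j},x\rangle]
\]
in $\Z[\R]$ for all $x \in \tau$. I would repeat the argument of Theorem \ref{th-K-class-resolution}, replacing $\exp$ by the formal symbol $[\cdot]$ throughout. The bracket and the exponential behave identically here: the argument only uses linearity of the sums in these symbols together with the inclusion--exclusion identities
\[
\sum_{\sigma\in\Sigma(\rho)}(-1)^{\codim\sigma}=1, \qquad \sum_{\sigma\in\Sigma}(-1)^{\codim\sigma}=1.
\]
Alternatively, one could invoke the theorem directly. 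The map $[a]\mapsto e^{a}$ extends to a ring homomorphism $\Z[\R]\to\R$, and the functions $x\mapsto e^{\langle u,x\rangle}$ for distinct $u\in M$ are linearly independent on the open cone $\tau$. So an equality of $K$-classes on $\tau$ implies equality of the multisets of linear forms, counted with signed multiplicity. That is exactly equality in $\Z[\R]$ pointwise on $\tau$.

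I expect the main obstacle to be this last step, passing from values on ray generators to all of $\tau$. The proof of Theorem \ref{th-K-class-resolution} evaluates only at $\bv_\rho$ for $\rho \in \tau(1)$ and then appeals to smoothness. For the multi-valued statement I would instead argue at the level of the multisets $u^\sigma(\tau)$ themselves. For $\sigma\not\ni\rho$, every element of $u^\sigma(\tau)$ pairs with $\bv_\rho$ to give $f(\bv_\rho)$. For $\sigma\ni\rho$, the pairing agrees with that of $u_{\sigma,j}$, which equals that of $u_{\tau,j}$ up to the matching of indices along $\sigma\cap\tau$ supplied by the compatibility condition. Since $\tau$ is smooth, each character is determined by its pairings with the $\bv_\rho$, $\rho\in\tau(1)$. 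The signed multiset identity can then be checked coordinate-wise on the dual basis of $\tau$. This is where the bookkeeping of bases and the choice of $f$ must be handled carefully; the exponential independence argument above is the fallback if that becomes messy. Once the two support functions agree on every maximal cone, Theorem \ref{th-convex-chain-supp-function} gives the equality of the convex chains.
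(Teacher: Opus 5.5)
\textbf{Verdict: there is a genuine gap, and it is the same one as in the paper's own proof.} Your reduction (Theorem~\ref{th-convex-chain-supp-function}, then comparing the two multi-valued functions on each maximal cone $\tau$) is exactly the paper's. The paper also disposes of the key identity as ``identical to the proof of Theorem~\ref{th-K-class-resolution}''. You located the weak spot correctly, but neither of your remedies closes it.

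Re-running the ray-by-ray argument with $[\cdot]$ in place of $\exp$ only shows that, for each $\rho\in\tau(1)$ separately, the signed multisets of numbers $\langle u,\bv_\rho\rangle$ agree. ``Checking coordinate-wise on the dual basis'' cannot upgrade this. Each single character is determined by its pairings with the $\bv_\rho$, but a signed multiset of characters is not determined by its one-coordinate marginals. For example, take $\tau=\cone(\bv_1,\bv_2)$ with $\bv_1,\bv_2$ the standard basis of $N=\Z^2$. Then $[(1,0)]+[(0,1)]$ and $[(1,1)]+[(0,0)]$ have the same pairings with each ray generator, but they differ at $x=(1,1)$. Equivalently $2e\neq e^2+1$, so the sentence in the proof of Theorem~\ref{th-K-class-resolution} claiming that a sum of exponentials is determined by its values on the rays of $\tau$ is false. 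Your fallback (linear independence of exponentials applied to Theorem~\ref{th-K-class-resolution}) is a correct deduction from that theorem's statement. However, the theorem is established on all of $\tau$ only through that same step, so the fallback just relocates the gap.

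The missing idea is to track which rays of $\tau$ lie in $\sigma$ \emph{simultaneously}. Group the cones by $\gamma=\sigma\cap\tau$, a face of $\tau$ with $\gamma(1)=\sigma(1)\cap\tau(1)$. Two facts then finish the argument.

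First, $u^\sigma(\tau)$, as a multiset of characters, depends only on $\gamma$. It is obtained from the multiset of restrictions $u(\sigma)|_\gamma$ by prescribing the values $f(\bv_\rho)$ on $\tau(1)\setminus\gamma(1)$. Moreover $u(\sigma)|_\gamma=u(\tau)|_\gamma$ jointly, as multisets of linear forms on $\gamma$. Indeed, for every $x\in\gamma$ both $B_\sigma$ and $B_\tau$ are maximum-weight bases for $\Phi(x)\in\widetilde{A}_{B_\sigma}\cap\widetilde{A}_{B_\tau}$, so they carry the same multiset of weights. Pointwise equality of these value multisets on $\gamma$ then gives equality of the multisets of linear forms, e.g.\ by your exponential-independence argument on $\relint\gamma$.

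Second, set $c_\gamma:=\sum_{\sigma\cap\tau=\gamma}(-1)^{\codim\sigma}$; then $c_\gamma=1$ if $\gamma=\tau$ and $c_\gamma=0$ otherwise. For every face $\gamma\preceq\tau$ we have $\sum_{\gamma\preceq\gamma'\preceq\tau}c_{\gamma'}=\sum_{\sigma\supseteq\gamma}(-1)^{\codim\sigma}=1$. This holds because the star of $\gamma$ is a complete fan in $N_\R/\Span(\gamma)$ with the same codimensions. M\"obius inversion on the Boolean face lattice of the simplicial cone $\tau$ (or downward induction from $c_\tau=1$) then gives the claim.

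Write $U_\gamma$ for the common value of $u^\sigma(\tau)$ over all $\sigma$ with $\sigma\cap\tau=\gamma$. Then $\sum_\sigma(-1)^{\codim\sigma}\sum_j[u^\sigma_{\tau,j}]=\sum_{\gamma\preceq\tau}c_\gamma\,[U_\gamma]=\sum_j[u_{\tau,j}]$ as formal signed sums of characters. This gives equality of the two multi-valued functions at every $x\in\tau$, and it repairs Theorem~\ref{th-K-class-resolution} as well. The rest of your argument then goes through.
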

\begin{proof}
It suffices to show that $\h_\E$ given in the equation \eqref{equ-alt-h-E} coincides with the one in Definition \ref{multi-valued support}. The proof of this is identical to the proof of Theorem \ref{th-K-class-resolution}.   
\end{proof}

\begin{example}
Let $\Sigma$ be the fan of $\P^2$ with rays, $\rho_1,\rho_2, \rho_3$ generated by
$\bv_1=(1,0), \bv_2=(0,1), \bv_3=(-1,-1),$
and maximal cones
$$\sigma_{12}=\cone(\bv_1,\bv_2),\qquad
\sigma_{23}=\cone(\bv_2,\bv_3),\qquad
\sigma_{13}=\cone(\bv_1,\bv_3).$$
Let $\M=U_{2,3}$ be the uniform matroid of rank $2$ on $\G=\{1,2,3\}$ and let $\E$ be the tropical vector bundle defined by the diagram
$$\begin{array}{c|ccc}
 & 1 & 2 & 3 \\ \hline
\rho_1 & 1 & 0 & 0 \\
\rho_2 & 0 & 1 & 0 \\
\rho_3 & 0 & 0 & 1
\end{array}$$
with compatible filtrations
$$F_i^{\rho_k}=
\begin{cases}
\G & i< 1,\\
\{k\} & i=1,\\
0 &  1<i.
\end{cases}$$
For each maximal cone $\sigma$, we have a basis
$B_{\sigma_{12}}=\{1,2\}, B_{\sigma_{23}}=\{2,3\}, B_{\sigma_{13}}=\{1,3\},$
and characters
$$u_\E(\sigma_{12})=\{(1,0),(0,1)\},\quad
u_\E(\sigma_{23})=\{(-1,1),(-1,0)\},\quad
u_\E(\sigma_{13})=\{(1,-1),(0,-1)\}.$$
For maximal cones $\tau,$ the characters of $\sigma$-summand of $\F_0$ on the chart $U_\tau$  are:
\begin{center}
\renewcommand{\arraystretch}{1.3}
\begin{tabular}{c|c|c|c}
$\tau \backslash \sigma$
& $\sigma_{12}$ & $\sigma_{23}$ & $\sigma_{13}$ \\
\hline
$\sigma_{12}$
& $\{(1,0),(0,1)\}$
& $\{(0,1),0\}$
& $\{0,(1,0)\}$ \\

$\sigma_{23}$
& $\{0,(-1,1)\}$
& $\{(-1,1),(-1,0)\}$
& $\{(-1,0),0\}$ \\

$\sigma_{13}$
& $\{(1,-1),0\}$
& $\{0,(0,-1)\}$
& $\{(0,-1),(1,-1)\}$
\end{tabular}
\end{center}
Summing over maximal cones, $\F_0$ is a rank $6$ tropical vector bundle with multi-set of characters
$$u_{F_0}(\tau)=\bigsqcup_{\sigma\ \text{maximal}} u^\sigma(\tau),$$
and
$$\begin{aligned}
u_{\F_0}(\sigma_{12})&=\{(1,0),(1,0),(0,1),(0,1),0,0\},\\
u_{\F_0}(\sigma_{23})&=\{(-1,1),(-1,1),(-1,0),(-1,0),0,0\},\\
u_{\F_0}(\sigma_{13})&=\{(1,-1),(1,-1),(0,-1),(0,-1),0,0\}.
\end{aligned}$$
Similarly, for $\F_1$ and rays $\sigma=\rho_i$ characters $u^\sigma_\tau$ on $U_\tau$ for maximal cones $\tau$ are:
\begin{center}
\renewcommand{\arraystretch}{1.3}
\begin{tabular}{c|c|c|c}
$\tau \backslash \sigma$
& $\rho_1$ & $\rho_2$ & $\rho_3$ \\
\hline
$\sigma_{12}$
& $\{(1,0),0\}$
& $\{(0,1),0\}$
& $\{0,0\}$ \\

$\sigma_{23}$
& $\{0,0\}$
& $\{(-1,1),0\}$
& $\{(-1,0),0\}$ \\

$\sigma_{13}$
& $\{(1,-1),0\}$
& $\{0,0\}$
& $\{(0,-1),0\}$
\end{tabular}
\end{center}

The multi-set of characters of $\F_1$ on maximal cones are
$$\begin{aligned}
u_{\F_1}(\sigma_{12})&=\{(1,0),(0,1),0,0,0,0\},\\
u_{\F_1}(\sigma_{23})&=\{(-1,1),(-1,0),0,0,0,0\},\\
u_{\F_1}(\sigma_{13})&=\{(1,-1),(0,-1),0,0,0,0\}.
\end{aligned}$$

And for all cones $\tau$
$$u_{\F_2}(\tau)=\{0,0\}.$$

Let $\h_\E$ and $\h_{\F_k}$ be the support function as in Definition \ref{multi-valued support}. In particular, on $\sigma_{12}$,
$$\begin{aligned}
\h_{\F_0}(x)&=2[\langle(1,0),x\rangle]+2[\langle(0,1),x\rangle]+2[0],\\
\h_{\F_1}(x)&=[\langle(1,0),x\rangle]+[\langle(0,1),x\rangle]+4[0],\\
\h_{\F_2}(x)&=2[0],
\end{aligned}$$
so
$$\h_{\F_0}(x)-\h_{\F_1}(x)+\h_{\F_2}(x)=[\langle(1,0),x\rangle]+[\langle(0,1),x\rangle]=\h_\E(x).$$
The same computation holds on $\sigma_{23}$ and $\sigma_{13}$, hence
$$\h_{\F_0}-\h_{\F_1}+\h_{\F_2}=\h_\E,$$
and the two constructions coincide.

\end{example}

\subsection{Euler characteristic of a tropical vector bundle}
It is well known that for a virtual polytope with convex chain $\alpha$ and the associated toric line bundle $\mathcal{L}_\alpha$, $$\chi(X_\Sigma,\mathcal{L}_\alpha) = S(\alpha)=\displaystyle\sum_{u\in M} \alpha(u),$$
(see for example \cite[Proposition 3.3]{CHK}). Similar to toric line bundles, we introduce a combinatorial description of the Euler characteristic of a tropical vector bundle by showing that the invariance of Euler characteristic of tropical vector bundle under a toric pull-back.
\begin{remark}
     The notion of \emph{pull-back} of a tropical vector bundle makes sense. For a refinement $\Sigma'$ of $\Sigma$, the pull-back bundle is defined by a diagram with more rows. The entries in the diagram corresponding to the new rays are given by the piecewise linearity of the map $\Phi_\E$.
     
       Explicitly, let $\E$ be a tropical vector bundle on $X_\Sigma$ with a matroid $\M$. Let $\varphi: X_{\Sigma'}\to X_\Sigma$ be the toric morphism that is determined by a morphism between the cocharacter lattices $\overline{\varphi}:N'\to N$. Then, we define the pull-back tropical vector bundle $\varphi^*\E$ over $X_{\Sigma'}$ by the data, $B_{\sigma'} = B_\sigma$ and $u(\sigma')=u(\sigma)$ where $\overline{\varphi}(\sigma')\subset\sigma$.
\end{remark}

\begin{lemma}\label{h^0sigma}
     Let $\sigma$ be a cone in $N_\mathbb{R}$ and $\sigma^\vee+u = \{p \in M_\mathbb{R} \mid \langle u,x \rangle \leq \langle p,x\rangle, \,\forall x\in \sigma\}$ be the dual cone $\sigma^\vee$ translated by $u$. Then,
$$h_\sigma^0(u):=h^0(U_\sigma, \E)_u  = \displaystyle\sum_{i=1}^r \mathbbm{1}_{\sigma^\vee+u} (u_{\sigma,i}).$$
\end{lemma}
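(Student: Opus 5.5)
The plan is to start from Remark \ref{h^0}, which gives the rank of the flat of local sections directly in terms of the basis $B_\sigma$ and the characters $u(\sigma)$:
$$h^0(U_\sigma, \E\!\!\mid_{U_\sigma})_u = \vert \{ i \in B_\sigma \mid \langle u, x\rangle \leq \langle u_{\sigma,i}, x\rangle,\ \forall x \in \sigma\}\vert .$$
The statement then follows by rewriting the condition inside the set as membership of $u_{\sigma,i}$ in the translated dual cone $\sigma^\vee + u$, and counting.

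First, justify Remark \ref{h^0} from Definition \ref{def-sheaf of section} and the compatibility condition of Definition \ref{def-matroid-vb-1}. For each $\rho \in \sigma(1)$ we have $F^\rho_{\langle u, \bv_\rho\rangle} = \Span\{b_{\sigma,j} \mid \langle u_{\sigma,j}, \bv_\rho\rangle \geq \langle u,\bv_\rho\rangle\}$. All of these are spans of subsets of the single basis $B_\sigma$. Intersecting over $\rho \in \sigma(1)$, we need the fact that the intersection of closures of subsets $S_\rho$ of one basis is the closure of $\bigcap_\rho S_\rho$. This holds because a subset of a basis is independent, and an element $b\in B_\sigma\setminus S$ is not in $\overline{S}$ for $S\subseteq B_\sigma$.

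Hence $\bigcap_\rho F^\rho_{\langle u,\bv_\rho\rangle}$ is the flat spanned by $S=\{b_{\sigma,j} \mid \langle u_{\sigma,j}-u,\bv_\rho\rangle\geq 0\ \forall\rho\in\sigma(1)\}$. Its rank is $|S|$, since $S$ is independent.

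This matroid step is the only mildly delicate point. The containment $\overline{S}\subseteq\bigcap\overline{S_\rho}$ is clear. For the reverse containment, I would use the basis-exchange argument: since $\overline{S_\rho}$ is spanned by part of a basis, the rank of $\overline{S}$ equals the rank of the intersection. Concretely, the flats $\overline{T}$ for $T\subseteq B_\sigma$ form a Boolean lattice isomorphic to subsets of $B_\sigma$, with meet given by intersection of subsets. Note also that the paper's definition simply records the rank, and the remark asserts this equality, so I may cite Remark \ref{h^0}.

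Second, since the ray generators $\bv_\rho$, $\rho\in\sigma(1)$, generate $\sigma$, the inequalities $\langle u_{\sigma,j}-u,\bv_\rho\rangle\ge 0$ for all rays are equivalent to $\langle u_{\sigma,j}-u,x\rangle\geq 0$ for all $x\in\sigma$. This means $u_{\sigma,j}-u\in\sigma^\vee$, i.e. $u_{\sigma,j}\in\sigma^\vee+u$.

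Finally, counting with multiplicity over $j=1,\dots,r$ (each $j$ corresponds to a distinct basis element $b_{\sigma,j}$, so repeated characters count separately) gives
$$h^0_\sigma(u)=|S|=\sum_{i=1}^r \mathbbm{1}_{\sigma^\vee+u}(u_{\sigma,i}).$$
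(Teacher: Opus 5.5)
Your proposal is correct and is essentially the paper's argument: both read $h^0_\sigma(u)$ off Remark~\ref{h^0} and observe that $\langle u_{\sigma,i}-u,x\rangle\ge 0$ for all $x\in\sigma$ (equivalently, for all $\bv_\rho$ with $\rho\in\sigma(1)$) says exactly $u_{\sigma,i}\in\sigma^\vee+u$. The only cosmetic difference is that the paper first translates to $u=0$. Your extra justification of the remark goes beyond the paper, but the argument you sketch only handles elements of $B_\sigma$; the clean way to get that $\bigcap_\rho\overline{S_\rho}$ has rank $|\bigcap_\rho S_\rho|$ is submodularity, $r(\overline{S_1}\cap\overline{S_2})\le |S_1|+|S_2|-|S_1\cup S_2|=|S_1\cap S_2|$ for $S_1,S_2\subseteq B_\sigma$, rather than basis exchange.
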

\begin{proof}
     It suffices to prove for $u=0$ by translation.
        By Remark \ref{h^0} we have 
        $$h_\sigma^0(u) = \vert\{ b_i \in B_\sigma\mid \langle u,x\rangle \leq \langle u_{\sigma,i},x\rangle \,\,, \forall x\in\sigma\}\vert.$$
        Then, clearly
$$h_\sigma ^0(0)  = \sum_{i=1}^r \mathbbm{1}_{\sigma^\vee} (u_{\sigma,i}).$$
\end{proof}

    

\begin{lemma}\label{lemma-BG-polyhedron}
Let $P \subset \R^n$ be a  polyhedron with no linearity space, that is, it does not contain any line. Then
$$
\sum_{F \preceq P} (-1)^{\dim(F)}
=\begin{cases}
1 & \text{$P$ is bounded},\\
0 & \text{$P$ is unbounded}.
\end{cases}
$$
\end{lemma}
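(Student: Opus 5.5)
The plan is to prove the statement via Euler characteristics of topological spaces, splitting into the bounded and unbounded cases. The face lattice of $P$ is that of a cell complex, and $\sum_F(-1)^{\dim F}$ counts its cells with signs.

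\emph{Bounded case.} Here $P$ is a polytope, so the faces of $P$ (including $P$ itself, and excluding the empty face) form a regular CW decomposition of $P$. Since $P$ is contractible, the alternating count of cells equals $\chi(P)=1$. Alternatively, one can evaluate the Brianchon--Gram formula (Theorem \ref{BG}) at a point; more simply, it is the Euler--Poincaré relation $\sum_{F\neq\emptyset}(-1)^{\dim F}=1$ for polytopes.

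\emph{Unbounded case.} Since $P$ contains no line, it has at least one vertex, and we may apply a projective transformation. Choose a linear functional $\ell$ that is bounded below on $P$ and strictly positive on the recession cone $\mathrm{rec}(P)\setminus\{0\}$; such an $\ell$ exists because $\mathrm{rec}(P)$ is pointed. Then:
\begin{itemize}
\item The slice $Q=P\cap\{\ell\le t\}$ is a polytope for $t$ large.
\item The faces of $P$ correspond bijectively to the faces of $Q$ not contained in the hyperplane $H=\{\ell=t\}$, via $F\mapsto F\cap\{\ell\le t\}$, and dimensions are preserved.
\item The faces of $Q$ lying in $H$ are exactly the faces of the polytope $Q\cap H$, a cross-section of the recession cone, which is nonempty since $P$ is unbounded.
\end{itemize}
It follows that
$$\sum_{F\preceq P}(-1)^{\dim F}=\sum_{G\preceq Q}(-1)^{\dim G}-\sum_{G\preceq Q\cap H}(-1)^{\dim G}=1-1=0,$$
by the bounded case applied to both $Q$ and $Q\cap H$.

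The main obstacle is justifying the face correspondence in the unbounded case. One needs $t$ larger than $\ell$ at every vertex of $P$, so that each face of $P$ meets the open half-space $\{\ell<t\}$. One also needs to check two things: that every face of $Q$ not contained in $H$ arises in this way, and that an unbounded face $F$ of $P$ gives $F\cap H$ as a face of $Q\cap H$ of dimension $\dim F-1$, while bounded faces avoid $H$. Both checks follow from the standard description $P=\conv(\text{vertices})+\mathrm{rec}(P)$ and from genericity of $t$.
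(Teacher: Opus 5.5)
Your proof is correct and follows the paper's route. The bounded case is handled by contractibility (Euler--Poincar\'e). In the unbounded case, you truncate $P$ by a half-space to get a polytope $Q$; its faces lying in the cutting hyperplane are exactly the faces of the cross-section $P\cap H$, which gives $1-1=0$.

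Your bookkeeping of the face correspondence is more careful than the paper's. The remark about applying a projective transformation is never used and can be dropped.
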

\begin{proof}
If $P$ is bounded, the left hand side computes the Euler characteristic of $P$ and hence is equal to $1$ since $P$ is contractible. Now we assume $P$ is unbounded. For $x \in N_\R$ and $R > 0$ let:
$$H_{x, R} = \{u \in M_\R \mid \langle u,x\rangle=R\} \quad \text{and}\quad H^+_{x, R} = \{u \in M_\R \mid \langle u,x\rangle\geq R\}.$$
 Take $x\in N_\R$ and $R>0$ such that $P' := P \cap H^+_{x, R}$ is the polytope obtained by intersecting $P$ and  $H^+_{x,R}$.
New faces of $P'$ correspond to precisely unbounded faces of $P$. Hence,
$$1=  \sum_{F\preceq P'}(-1)^{\dim(F)} = \sum_{F\preceq P}(-1)^{\dim(F)}+\sum_{\substack{F\preceq P\\ \text{unbounded}}}(-1)^{\dim(F)-1}=  \sum_{F\preceq P}(-1)^{\dim(F)}+1$$
because $\displaystyle\sum_{\substack{F\preceq P \\ \text{unbounded}}}(-1)^{\dim(F)-1}$ computes the Euler characteristic of the polytope $P\cap H_{x,R}.$
\end{proof}

\begin{proposition}\label{Prop-BG-Cone supp}
 Let $\Sigma$ be a fan whose support is a convex cone $C:=|\Sigma|$. Then,
$$\sum_{\sigma\in \Sigma} (-1)^{\codim(\sigma)}\mathbbm{1}_{\sigma^\vee} = (-1)^{n}\mathbbm{1}_{-\relint(C^\vee)}.$$
\end{proposition}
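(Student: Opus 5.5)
We prove the identity pointwise. Fix $p \in M_\R$ and evaluate the left hand side at $p$. We have $p \in \sigma^\vee$ if and only if $\langle p, x\rangle \geq 0$ for all $x \in \sigma$, that is, $\sigma \subseteq H_p := \{x \in N_\R \mid \langle p,x\rangle \geq 0\}$. So the left hand side at $p$ equals
$$\sum_{\sigma \in \Sigma,\ \sigma \subseteq H_p} (-1)^{n-\dim\sigma},$$
and everything reduces to computing the signed count of cones of $\Sigma$ lying in the closed half-space $H_p$. (For $p=0$ this is all of $\Sigma$.) We then compare it with $(-1)^n \mathbbm{1}_{-\relint(C^\vee)}(p)$. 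Note that $p \in -\relint(C^\vee)$ means $\langle p,x\rangle<0$ for all nonzero $x\in C$, provided $C$ is pointed; in general it means $p$ is negative on $C$ away from its lineality.

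First take the case where $p$ is strictly negative on $C\setminus\{0\}$. Then the only cone inside $H_p$ is $\{0\}$, giving $(-1)^n$, which matches the right hand side. If $C$ is not pointed then $-\relint(C^\vee)$ is relatively open in the lower-dimensional space $C^\perp$-type subspace, and the $\{0\}$-only argument must be adjusted. I would first treat pointed $C$ (the case used later, when $\sigma$ is a maximal cone refined by $\Sigma'$) and handle the lineality space by quotienting.

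In the remaining case $p$ is nonnegative somewhere on $C\setminus\{0\}$, and we must show the signed count vanishes. Let $\Sigma_p = \{\sigma\in\Sigma \mid \sigma\subseteq H_p\}$. This is a subfan whose support is $C\cap H_p$, since any cone meeting the open half $\{\langle p,\cdot\rangle<0\}$ is not in $H_p$, and $C \cap H_p$ is covered by cones not crossing the hyperplane. That covering claim is false in general, so instead I will use the following idea. Intersect with an affine hyperplane $\{\langle q,x\rangle = 1\}$ for $q\in\relint(C^\vee)$ (pointed case). This turns $\Sigma$ into a polyhedral subdivision of the polytope $Q=C\cap\{q=1\}$, and the nonzero cones of $\Sigma_p$ into a subcomplex $K_p$ consisting of cells lying in the closed half-space $\{p\geq0\}$. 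Then $\sum_{\sigma\in\Sigma_p}(-1)^{\dim \sigma} = 1 - \chi(K_p)$. It therefore suffices to show $\chi(K_p)=1$ whenever $K_p\neq\emptyset$, i.e. whenever $Q\cap\{p\ge0\}\neq\emptyset$.

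\textbf{Main obstacle.} Showing that $K_p$ has Euler characteristic $1$. This should follow from $|K_p|$ being a deformation retract of $Q\cap\{p\geq 0\}$, which is convex. Cells of the subdivision meeting $\{p\ge 0\}$ but not contained in it can be pushed, via straight-line flow away from $\{p<0\}$ face by face, onto their faces in $K_p$. Equivalently, one can invoke the standard fact that for a linear functional on a polyhedral complex, the subcomplex of cells on which it is $\ge 0$ is homotopy equivalent to the superlevel set. Lemma \ref{lemma-BG-polyhedron} handles the analogous count for a single polyhedron, and can be used cell-by-cell via inclusion–exclusion as an alternative route. Once $\chi(K_p)=1$ is established, the signed count vanishes, and the proposition follows.
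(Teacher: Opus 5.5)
Your argument is correct for pointed $C$, and it takes a genuinely different route from the paper's. The paper fixes $u$ and intersects $C$ with $\{\langle u,\cdot\rangle\ge 1\}$ to get a single polyhedron $P(u)$. It applies Lemma~\ref{lemma-BG-polyhedron} to $P(u)$ and matches the cones $\sigma$ with $u\in\sigma^\vee$ to faces of $P(u)$. That matching only involves the face lattice of one polyhedron, so at best it accounts for the face fan of $C$. It never engages a genuine subdivision, which is exactly the case needed when the proposition is applied to $\Sigma'_\sigma$ in Proposition~\ref{lemma-BG-cone}. Your reduction works for an arbitrary fan supported on $C$: slice by $\{q=1\}$, write the value at $p$ as $(-1)^n(1-\chi(K_p))$, and show $\chi(K_p)=1$. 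The cost is that you need an Euler-characteristic fact about subcomplexes cut out by a half-space, in place of the single-polyhedron lemma.

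Two points need attention.

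\emph{The step $\chi(K_p)=1$.} This is the only substantive step, and you only sketch it. Here is a clean combinatorial version. For each cell $G$ meeting both $\{p>0\}$ and $\{p<0\}$, add the two cells $G\cap\{p\ge 0\}$ and $G\cap\{p=0\}$, of dimensions $\dim G$ and $\dim G-1$. Together with $K_p$ these form a polyhedral subdivision of the polytope $Q\cap\{p\ge 0\}$. (A cell with $p\le 0$ that touches $\{p=0\}$ meets it in a face already in $K_p$.) This subdivision has Euler characteristic $1$, and the added cells cancel in pairs, so $\chi(K_p)=1$. Also $K_p\neq\emptyset$, because a vertex of $Q$ maximizing $p$ is a $0$-cell. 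If you prefer the homotopy route, retract cell by cell in decreasing dimension. Retract $G\cap\{p\ge 0\}$ onto $\partial G\cap\{p\ge 0\}$ by radial projection from a point just beyond its facet $G\cap\{p=0\}$.

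\emph{The non-pointed case.} Do not try to treat the lineality space by quotienting. The cones of $\Sigma$ are strongly convex, so $\Sigma$ does not descend to $N_\R/L$. More to the point, the identity as stated is false when $C$ is not pointed. For the complete fan in $\R^1$, the left side is $-\mathbbm{1}_{\R}+\mathbbm{1}_{\R_{\ge 0}}+\mathbbm{1}_{\R_{\le 0}}=\mathbbm{1}_{\{0\}}$, while the right side is $-\mathbbm{1}_{\{0\}}$. Your own first-case computation explains why. With $L=C\cap(-C)$ and $p\in-\relint(C^\vee)$, the cones in $H_p$ are exactly those of the complete fan induced on $L$. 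They contribute $(-1)^{n-\dim L}$ rather than $(-1)^n$. So state and prove the proposition for pointed $C$. That is the only case the paper uses, since there $C$ is a single cone of $\Sigma$.
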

\begin{proof}
Fix $u\in M_\R$ and set $P(u):=C\cap H^+(u)$, where
$H^+(u):=\{x\in N_\R:\langle u,x\rangle \geq 1\}$.
Then $P(u)$ is bounded if and only if $u\in-\relint(C^\vee)$.
By Lemma \ref{lemma-BG-polyhedron},
$$
\sum_{F\preceq P(u)}(-1)^{\dim F}=\mathbbm{1}_{-\relint(C^\vee)}(u).
$$
Note that $u\in \sigma^\vee$ if and only if $\sigma$ corresponds to a face of $P(u)$. Hence,
$$\sum_{\sigma\in\Sigma}(-1)^{\codim(\sigma)}\,\mathbbm{1}_{\sigma^\vee}(u)
=
(-1)^n \sum_{F\preceq P(u)}(-1)^{\dim F}
=
(-1)^n\,\mathbbm{1}_{-\relint(C^\vee)}(u).
$$    
\end{proof}


\begin{proposition}\label{lemma-BG-cone}
    Let $\Sigma$ be a fan and $\Sigma'$ be a refinement of $\Sigma$. Let
    $\phi:\Sigma'\to \Sigma$
    denote the map sending a cone $\tau\in\Sigma'$ to the smallest cone $\sigma\in \Sigma$ that contains $\tau$. Then, for a cone $\sigma\in \Sigma$ we have    
\begin{equation}\label{eq-BG-fiber}
(-1)^{\codim(\sigma)}\mathbbm{1}_{\sigma^\vee} = \sum_{\tau \in \phi^{-1}(\sigma)}(-1)^{\codim(\tau)}\mathbbm{1}_{\tau^\vee}.    
\end{equation}
\end{proposition}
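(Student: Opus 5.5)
The plan is to apply Proposition \ref{Prop-BG-Cone supp} to the subfan $\Sigma'_\sigma := \{\tau \in \Sigma' \mid \tau \subseteq \sigma\}$, whose support is the convex cone $\sigma$. Then I will remove the contribution of the cones lying in proper faces of $\sigma$ by induction on $\dim\sigma$. The complication is that Proposition \ref{Prop-BG-Cone supp} is stated with $\codim$ taken in $N_\R$ ($n=\dim N_\R$), while $\sigma$ need not be full-dimensional. So I will work inside the linear span $L_\sigma=\Span(\sigma)$, where $\sigma$ has full dimension $d=\dim\sigma$. For cones $\tau\subseteq \sigma$, $\codim_N(\tau)=\codim_{L_\sigma}(\tau)+(n-d)$, and the dual cone $\tau^\vee\subset M_\R$ is the preimage of the dual cone of $\tau$ in $L_\sigma^*$ under the restriction map $M_\R\to L_\sigma^*$. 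Hence the identity of Proposition \ref{Prop-BG-Cone supp} in $L_\sigma$ pulls back to $M_\R$, giving
$$\sum_{\tau\in\Sigma'_\sigma}(-1)^{\codim(\tau)}\mathbbm{1}_{\tau^\vee}=(-1)^{n}\mathbbm{1}_{-\relint(\sigma^\vee)}.$$
Here $\relint(\sigma^\vee)$ means the preimage of the interior of the dual in $L_\sigma^*$, which is exactly the relative interior of $\sigma^\vee$ in the sense of $\{u:\langle u,x\rangle>0 \ \forall x\in\sigma\setminus\{0\}\}$.

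Next, $\Sigma'_\sigma$ is the disjoint union of the fibers $\phi^{-1}(\sigma')$ over faces $\sigma'\preceq\sigma$. This holds because a cone $\tau\subseteq\sigma$ of the refinement has as its smallest containing cone of $\Sigma$ the face of $\sigma$ whose relative interior meets $\relint\tau$. I argue by induction on $\dim\sigma$; the base case $\sigma=\{0\}$ is trivial since $\phi^{-1}(\{0\})=\{\{0\}\}$. Assuming \eqref{eq-BG-fiber} for all proper faces $\sigma'\prec\sigma$, I get
$$\sum_{\tau\in\phi^{-1}(\sigma)}(-1)^{\codim\tau}\mathbbm{1}_{\tau^\vee}=(-1)^n\mathbbm{1}_{-\relint(\sigma^\vee)}-\sum_{\sigma'\prec\sigma}(-1)^{\codim\sigma'}\mathbbm{1}_{\sigma'^\vee}.$$
It then remains to prove the identity
$$\sum_{\sigma'\preceq\sigma}(-1)^{\codim\sigma'}\mathbbm{1}_{\sigma'^\vee}=(-1)^n\mathbbm{1}_{-\relint(\sigma^\vee)}.$$
This is precisely Proposition \ref{Prop-BG-Cone supp} applied to the fan of faces of $\sigma$ (the trivial refinement), so it holds. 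Substituting it gives exactly $(-1)^{\codim\sigma}\mathbbm{1}_{\sigma^\vee}$.

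The main obstacle is the first step: justifying the use of Proposition \ref{Prop-BG-Cone supp} for a non-full-dimensional support. The proof of that proposition, via Lemma \ref{lemma-BG-polyhedron} applied to $P(u)=C\cap H^+(u)$, goes through verbatim inside $L_\sigma$. The one subtlety is a sign. Faces of $P(u)$ correspond to cones $\tau$ with $\dim F=\dim\tau-1$. This gives the sign $(-1)^{d}$ in $L_\sigma$, which, combined with the codimension shift $(-1)^{n-d}$, produces $(-1)^n$. I would also check that boundedness of $P(u)$ corresponds to $u$ being strictly negative on $\sigma\setminus\{0\}$. Because $P(u)$ uses $\geq 1$ and $\sigma$ is pointed, no lineality issues arise. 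I would record the statement of Proposition \ref{Prop-BG-Cone supp} with $C^\vee$'s relative interior interpreted this way, and then the induction above completes the proof.
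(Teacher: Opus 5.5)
Your proposal is correct and follows the paper's proof: apply Proposition~\ref{Prop-BG-Cone supp} to both $\Sigma'_\sigma$ and the face fan of $\sigma$, split $\Sigma'_\sigma$ into the fibers $\phi^{-1}(F)$ for $F\preceq\sigma$, and invert; your induction on $\dim\sigma$ is exactly the paper's M\"obius inversion on the face poset of $\sigma$. The detour through $\Span(\sigma)$ is a harmless extra precaution, since the paper applies that proposition directly in $N_\R$, where it is stated for arbitrary convex support. You also do not need to re-run its $P(u)$ argument, which as literally written would not give your boundedness criterion: $\{x\in\sigma\mid\langle u,x\rangle\ge 1\}$ is empty for every $u\in-\sigma^\vee$, not just for $u\in-\relint(\sigma^\vee)$.
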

\begin{proof}
For a fixed $\sigma\in\Sigma,$ let $\Sigma_\sigma$ denote the face fan of $\sigma$, and let
$\Sigma'_\sigma := \{\tau\in\Sigma' \mid \tau\subseteq\sigma\}$
be the restriction of $\Sigma'$ to the cones contained in $\sigma$. Then $\Sigma'_\sigma$ is a subdivision of $\Sigma_\sigma$. By Proposition \ref{Prop-BG-Cone supp} applied to $\Sigma_\sigma$ and $\Sigma'_\sigma,$
$$\sum_{F\preceq\sigma} (-1)^{\codim(F)}\mathbbm{1}_{F^\vee}= (-1)^n\mathbbm{1}_{-\relint(\sigma^\vee)} =\sum_{\tau\in \Sigma'_\sigma}(-1)^{\codim(\tau)}\mathbbm{1}_{\tau^\vee}.$$
Since each $\tau \in \Sigma'_\sigma$ belongs to $\phi^{-1}(F)$ for a unique face $F\preceq \sigma$,
\begin{equation}\label{eq-BG-local}
\sum_{F\preceq\sigma}(-1)^{\codim(F)}1_{F^\vee}=\sum_{F\preceq\sigma}
\sum_{\tau\in\phi^{-1}(F)}(-1)^{\codim(\tau)}
1_{\tau^\vee}
.
\end{equation}
By M\"obius inversion on $\Sigma_\sigma$, 
$$(-1)^{\codim(\sigma)}\mathbbm{1}_{\sigma^\vee} = \sum_{\tau \in \phi^{-1}(\sigma)}(-1)^{\codim(\tau)}\mathbbm{1}_{\tau^\vee}.$$
\end{proof}
\begin{theorem}  \label{th-toric-pull-back}
    The equivariant Euler characteristic of a tropical vector bundle over a complete toric variety is invariant under a toric pull-back.
\end{theorem}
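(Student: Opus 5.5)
Fix a refinement $\Sigma'$ of the complete fan $\Sigma$, the pull-back $\E'$ of $\E$ to $X_{\Sigma'}$, and a character $u\in M$. We compare the two sums in Definition \ref{def-equiv-Euler-matriod-vb} term by term, using the map $\phi:\Sigma'\to\Sigma$ of Proposition \ref{lemma-BG-cone}, which sends $\tau$ to the smallest cone of $\Sigma$ containing it. Since $\phi^{-1}(\sigma)$, for $\sigma\in\Sigma$, partition $\Sigma'$, we can write
$$\chi(X_{\Sigma'},\E')_u=\sum_{\sigma\in\Sigma}\ \sum_{\tau\in\phi^{-1}(\sigma)}(-1)^{\codim(\tau)}\,h^0(U_\tau,\E')_u.$$
It then suffices to prove, for each fixed $\sigma\in\Sigma$,
$$\sum_{\tau\in\phi^{-1}(\sigma)}(-1)^{\codim(\tau)}\,h^0(U_\tau,\E')_u=(-1)^{\codim(\sigma)}\,h^0(U_\sigma,\E)_u .$$

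To establish this identity, we express both sides with Lemma \ref{h^0sigma}. By the definition of the pull-back in the preceding remark, any $\tau\in\phi^{-1}(\sigma)$ satisfies $\tau\subseteq\sigma$, and we may take $B_\tau=B_\sigma$ and $u(\tau)=u(\sigma)$. If $u(\sigma)$ is only determined modulo $\sigma^\perp$, we fix one choice of representatives $u_{\sigma,1},\dots,u_{\sigma,r}$ and use the same representatives for all $\tau\subseteq\sigma$. The indicator values $\mathbbm{1}_{\tau^\vee+u}(u_{\sigma,i})$ do not depend on this choice, because $\sigma^\perp\subseteq\tau^\perp$. Lemma \ref{h^0sigma} then gives
$$h^0(U_\tau,\E')_u=\sum_{i=1}^r\mathbbm{1}_{\tau^\vee}(u_{\sigma,i}-u),\qquad h^0(U_\sigma,\E)_u=\sum_{i=1}^r\mathbbm{1}_{\sigma^\vee}(u_{\sigma,i}-u).$$
The identity for $\sigma$ now follows by evaluating equation \eqref{eq-BG-fiber} of Proposition \ref{lemma-BG-cone} at the points $u_{\sigma,i}-u$ and summing over $i$. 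Summing this identity over $\sigma\in\Sigma$ gives $\chi(X_{\Sigma'},\E')_u=\chi(X_\Sigma,\E)_u$.

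The main obstacle is verifying that the local data of $\E'$ on a cone $\tau\in\phi^{-1}(\sigma)$ really is $(B_\sigma,u(\sigma))$, so that the rank of the flat $H^0(U_\tau,\E')_u$ counts exactly those $i$ with $u_{\sigma,i}-u\in\tau^\vee$. Remark \ref{h^0} gives this, because $\Phi_{\E'}=\Phi_\E$ is linear on $\tau\subseteq\sigma$, with image in the apartment $\widetilde A_{B_\sigma}$, and is given there by the characters $u_{\sigma,i}$. A further point to check is that Proposition \ref{lemma-BG-cone} applies to an arbitrary cone $\sigma$ of $\Sigma$, not only a maximal one. Its proof is local to $\sigma$: it uses only the restricted fans $\Sigma_\sigma$ and $\Sigma'_\sigma$, and codimension is measured in $N_\R$ for both $\Sigma$ and $\Sigma'$, so it does apply.
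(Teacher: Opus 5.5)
Your proof is correct and is essentially the paper's argument. Both evaluate the fiber identity \eqref{eq-BG-fiber} at the translated characters $u_{\sigma,i}-u$, convert indicator values to $h^0$ via Lemma \ref{h^0sigma}, and sum over $\sigma\in\Sigma$ using that the fibers $\phi^{-1}(\sigma)$ partition $\Sigma'$; your remarks on choosing representatives modulo $\sigma^\perp$ and on non-maximal cones just make explicit points the paper leaves implicit.
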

\begin{proof}
    Let $\Sigma$ be a complete fan with a refinement $\phi:\Sigma'\to \Sigma$ where $\phi(\sigma')$ is the smallest cone in $\Sigma $ that contains $\sigma'$. For $\tau \subset \sigma$, let $u_{\sigma,i} = u_{\tau,i}$ be the characters of the bundle $\E$ over $X_\Sigma$ and its pull-back bundle $\E'$ over $X_{\Sigma'}$. Evaluating Equation \eqref{eq-BG-fiber} at the characters and summing over $1\leq i\leq r$, we have
$$    (-1)^{\codim(\sigma)}\sum_{i=1}^r\mathbbm{1}_{\sigma^\vee}(u_{\sigma,i}) = \sum_{\tau \in \phi^{-1}(\sigma)}(-1)^{\codim(\tau)}\sum_{i=1}^r\mathbbm{1}_{\tau^\vee}   (u_{\tau,i}).
$$
By Lemma \ref{h^0sigma} and by translation,
$$(-1)^{\codim(\sigma)}h^0_\sigma(u)=\sum_{\tau \in \phi^{-1}(\sigma)}(-1)^{\codim(\tau)}h^0_\tau(u).$$
Taking sum over all $\sigma \in \Sigma$, we have
$$\chi(X_\Sigma,\E)_u = \sum_{\sigma \in \Sigma}(-1)^{\codim(\sigma)} h^0_\sigma(u) =\sum_{\sigma\in\Sigma}\sum_{\tau \in \phi^{-1}(\sigma)}(-1)^{\codim(\tau)}h^0_\tau(u),$$
$$=\sum_{\tau\in \Sigma'}(-1)^{\codim(\tau)}h^0_\tau(u) =\chi(X_{\Sigma'},\E')_u.$$

\end{proof}

\begin{remark}
In \cite[Lemma 3.4]{CHK}, the above Theorem \ref{th-toric-pull-back} is proved for equivariant Euler characteristic of a toric vector bundle using algebra-geometric methods. Theorem \ref{th-toric-pull-back} extends this to tropical vector bundles.    
\end{remark}

\begin{theorem}\label{Euler-Ch}
        Let $\E$ be a tropical vector bundle over $X_\Sigma$ of rank $r$ with data $\Phi_\E : |\Sigma| \to \widetilde{\Berg}(\M)$. For the associated convex chain $\alpha_\E$,
        $$\chi(X_\Sigma,\E)_u=\alpha_\E(u).$$
\end{theorem}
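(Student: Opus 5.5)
The plan is to reduce to the case where the multi-valued support function $\h_\E$ is a sum of $r$ one-valued functions, each linear on every cone, and then apply the Brianchon--Gram formula for virtual polytopes (Remark \ref{rem-Brianchon-Gram}) to each summand. Throughout I use the Euler characteristic in the form
$$\chi(X_\Sigma,\E)_u=\sum_{\sigma\in\Sigma}(-1)^{\codim(\sigma)}\sum_{i=1}^r \mathbbm{1}_{\sigma^\vee+u}(u_{\sigma,i})=\sum_{\sigma\in\Sigma}(-1)^{\codim(\sigma)}\sum_{i=1}^r \mathbbm{1}_{C_{\sigma,i}}(u),$$
which is Lemma \ref{h^0sigma}. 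Here $C_{\sigma,i}=\{p\in M_\R \mid \langle p,x\rangle\le \langle u_{\sigma,i},x\rangle,\ \forall x\in\sigma\}=u_{\sigma,i}-\sigma^\vee$ is the tangent cone at $u_{\sigma,i}$. Note that $u\in\sigma^\vee+u_{\sigma,i}$ in the sense of the lemma means exactly $u\in C_{\sigma,i}$.

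\emph{Step 1 (refinement).} By Theorem \ref{th-toric-pull-back}, $\chi$ is unchanged under pull-back to a refinement $\Sigma'$. The support function $\h_{\E'}$ of the pull-back coincides with $\h_\E$, since the characters are inherited. Hence $\alpha_{\E'}=\alpha_\E$ by Theorem \ref{th-convex-chain-supp-function}. So we may refine freely. I will refine $\Sigma$ to a projective, smooth fan $\Sigma'$ with the following property: the multi-valued function $x\mapsto\{\langle u_{\sigma,i},x\rangle\}_i$ can be written as $r$ one-valued continuous functions $h_1,\dots,h_r\in\PL(\Sigma',\Z)$.

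To construct $\Sigma'$, refine by all hyperplanes $\langle u_{\sigma,i}-u_{\sigma,j},x\rangle=0$, restricted to each $\sigma$. This puts the values on each new cone in a fixed order, so we can take $h_k(x)=$ the $k$-th smallest value. These order statistics are continuous because the multiset of values agrees on common faces, which is the compatibility of the $u(\sigma)$ on $\sigma\cap\sigma'$ coming from the Klyachko condition. Each $h_k$ is linear on every cone of the refinement. Further refinement keeps this property, so we may also take $\Sigma'$ projective.

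\emph{Step 2 (line bundle case).} Each $h_k$ is the support function of a virtual polytope $\alpha_k\in\mathcal{P}^*(\Sigma')$. By Remark \ref{rem-Brianchon-Gram},
$$\alpha_k=\sum_{\tau\in\Sigma'}(-1)^{\codim\tau}\mathbbm{1}_{C_\tau^{(k)}},$$
where $C_\tau^{(k)}=\{p \mid \langle p,\bv_\rho\rangle\le h_k(\bv_\rho),\ \rho\in\tau(1)\}$. On $\tau$ the function $h_k$ is linear, given by $u_{\tau,j}$ for the appropriate $j$, so $C^{(k)}_\tau$ is exactly the cone $C_{\tau,j}$ above. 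Summing over $k$ gives
$$\sum_k\alpha_k=\sum_\tau(-1)^{\codim\tau}\sum_j\mathbbm{1}_{C_{\tau,j}},$$
and evaluating at $u$ yields $\chi(X_{\Sigma'},\E')_u$. By Theorem \ref{th-convex-chain-supp-function}, $\sum_k\alpha_k$ has support function $\sum_k[h_k]=\h_{\E'}=\h_\E$, so it equals $\alpha_\E$. Combining with Step 1 gives the theorem.

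\emph{Main obstacle.} The delicate point is Step 1, namely the continuity and global well-definedness of the ordered branches $h_k$ across cones. It uses that on $\sigma\cap\sigma'$ the multisets $\{\langle u_{\sigma,i},\cdot\rangle\}$ and $\{\langle u_{\sigma',i},\cdot\rangle\}$ agree. I would derive this from $\Phi_\E$ being a well-defined map into $\widetilde{\Berg}(\M)$: on a common face both describe $\Phi_\E(x)$ in coordinates of apartments, and the sorted coordinate multiset is intrinsic to the point. A second concern is that Remark \ref{rem-Brianchon-Gram} requires $\Sigma'$ projective; I would handle this by choosing a projective refinement, which always exists for a complete fan via toric Chow lemma style arguments. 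If only completeness were available, I would instead prove the identity $\mathbbm{1}=\sum(-1)^{\codim}\mathbbm{1}_{C_\tau}$ pointwise with Lemma \ref{lemma-BG-polyhedron}.
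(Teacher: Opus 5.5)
Your proposal is correct and follows essentially the same route as the paper. Both arguments refine the fan, using Theorem \ref{th-toric-pull-back} and the refinement-invariance of $\alpha_\E$, until the ordered branches $\h_1,\dots,\h_r$ of the multi-valued support function are linear on every cone; both then write $h^0_\sigma(u)$ as a sum of indicator functions of tangent cones and apply the Brianchon--Gram formula for virtual polytopes (Remark \ref{rem-Brianchon-Gram}) to each branch. Your extra care about continuity of the order statistics across common faces, and about choosing a projective refinement so that this Remark applies, fills in points the paper's proof passes over.
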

\begin{proof}
    Fix $\sigma\in \Sigma$ and $u\in M$. We order the set $\{\h_i(\bv_\rho):=\displaystyle i^{th}\min_{1\leq j\leq r}\langle u_{\sigma,j},\bv_\rho\rangle\}$. Since we are taking minimums, $\h_i$ are not necessarily linear on cones of $\Sigma$ but rather 
    $\h_i$ are piecewise linear with respect to a refinement of the fan $\Sigma$. In view of Theorem \ref{th-toric-pull-back}, after replacing $\Sigma$ with a refinement of $\Sigma$, we can assume that the $\h_i$ are linear on each cone of $\Sigma$. Also, we note that by definition, $\alpha_\E$ is invariant under refining the fan.
    Let $$P_{i,\sigma} = \{u\in M_\R \mid \langle u,\bv_\rho\rangle \leq \h_i(\bv_\rho), \,\forall \rho\in \sigma(1)\rangle\}.$$
    By the definition of $h^0_\sigma(u)$ (see Remark \ref{h^0}), we have
    $$\chi(X_\Sigma,\E)_u = \sum_{\sigma\in \Sigma}(-1)^{\codim(\sigma)}h^0_\sigma(u)\nonumber  =\sum_{\sigma\in\Sigma}(-1)^{\codim(\sigma)}\left(\sum_{i=1}^r\mathbbm{1}_{P_{i,\sigma}}(u)\right).$$
        
Let $\alpha_{i}$ be the convex chain associated to $\h_i$. From the Brianchon-Gram for virtual polytope associated $\alpha_i$ (see Remark \ref{rem-Brianchon-Gram}) we get
    $$ \chi(X_\Sigma,\E)_u=\sum_{i=1}^r\left(\sum_{\sigma\in\Sigma} (-1)^{\codim(\sigma)}\mathbbm{1}_{P_{i,\sigma}}(u)\right)=\sum_{i=1}^r \alpha_i(u) = \alpha_\E(u).$$
    \end{proof}

Theorem \ref{Euler-Ch} now allows us to apply the Khovanskii-Pukhlikov Hirzebruch-Riemann-Roch (Theorem \ref{Thm-Kh-Pu}) to the convex chain $\alpha_\E$ to obtain a combinatorial Hirzebruch-Riemann-Roch for tropical vector bundles. 

For a vector
$z=(z_\rho)_{\rho\in\Sigma(1)} \in \mathbb{R}^{\Sigma(1)}$, let $P(z)$ be the virtual polytope with the support numbers $z_\rho$ (see paragraph after Theorem \ref{th-convex-chain-supp-function}).
For the convex chain $\alpha_\E$ associated to a tropical vector bundle $\E$ on $X_\Sigma$, define $\alpha_\E[z]:= \alpha_\E * \mathbbm{1}_{P
(z)}.$

\begin{corollary}[Hirzebruch-Riemann-Roch for tropical vector bundles]  \label{cor-HRR-trop-vb}
    Let $\E$ be a tropical vector bundle over $X_\Sigma$ and $\alpha_\E$ be the associated convex chain. Then,
    $$\Todd\left(\frac{\partial}{\partial z}\right) I(\alpha_\E[z]) \Big\vert_{z=0} =\chi(X_\Sigma,\E).$$

\end{corollary}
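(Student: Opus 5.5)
The plan is to combine Theorem \ref{Euler-Ch} with the Khovanskii--Pukhlikov theorem (Theorem \ref{Thm-Kh-Pu}). Take $f\equiv 1$, the convex chain $\alpha=\alpha_\E$, and the virtual polytope family $\beta[z]=P(z)\in\mathcal{P}^*(\Sigma)$ parametrized by the support numbers $z=(z_\rho)_{\rho\in\Sigma(1)}$. Evaluating at $c=0\in\Z^{s}$ gives
$$\Todd\left(\frac{\partial}{\partial z}\right) I(\alpha_\E * \mathbbm{1}_{P(z)})\Big\vert_{z=0} = S(\alpha_\E * \mathbbm{1}_{P(0)}).$$
So the proof reduces to identifying the right-hand side with $\chi(X_\Sigma,\E)$.

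First I would check that $P(0)$ is the identity $\mathbbm{1}_{\{0\}}$ of the convolution algebra. Its support function vanishes on all rays, so it is the zero function, and by Theorem \ref{th-convex-chain-supp-function} it corresponds to $\mathbbm{1}_{\{0\}}$. Hence $\alpha_\E[0]=\alpha_\E$. Next, Theorem \ref{Euler-Ch} gives $\alpha_\E(u)=\chi(X_\Sigma,\E)_u$ for each $u\in M$. Summing over $u$ and using Definition \ref{def-equiv-Euler-matriod-vb}, we get
$$S(\alpha_\E)=\sum_{u\in M}\alpha_\E(u)=\sum_{u\in M}\chi(X_\Sigma,\E)_u=\chi(X_\Sigma,\E).$$
This sum is finite because $\alpha_\E$ is a finite combination of indicator functions of bounded polytopes, so only finitely many $u$ contribute.

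The main obstacle is whether Theorem \ref{Thm-Kh-Pu} applies as stated. Its hypotheses presuppose a projective (or at least suitable) fan $\Sigma$, so that $\mathcal{P}^*(\Sigma)$ and the coordinates $z$ make sense. They also need $\alpha_\E$ to be a lattice convex chain: the chain should be integral, and the polytopes in the chain and in $P(z)$ should be compatible with $\Sigma$.

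Integrality of $\alpha_\E$ is clear. In the proof of Theorem \ref{Euler-Ch}, $\alpha_\E=\sum_i\alpha_i$, where each $\alpha_i$ is a virtual polytope with integral support function $\h_i$ on a refinement $\Sigma'$. This refinement can be chosen projective and rational, for instance by refining along the hyperplanes $\langle u_{\sigma,j}-u_{\sigma,k},\cdot\rangle=0$. Each $\alpha_i$ is then a lattice virtual polytope normal to $\Sigma'$.

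If the Todd operator is meant with respect to the rays of $\Sigma$ while the chain lives on $\Sigma'$, I would argue as follows. Restricting $z$ to the subspace of support numbers pulled back from $\Sigma$ commutes with $I$ and $S$. Moreover, the Khovanskii--Pukhlikov statement is applied to the product $\alpha*\beta[z]$ with $\beta$ ranging over $\mathcal{P}^*(\Sigma)$ and $\alpha$ an arbitrary lattice convex chain. So no normality of $\alpha_\E$ to $\Sigma$ is required. I expect this hypothesis check to be the only nonformal point; the rest is direct substitution.
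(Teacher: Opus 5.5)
Your core argument is exactly the paper's proof, which is only the remark that Theorem~\ref{Euler-Ch} lets one feed $\alpha_\E$ into Theorem~\ref{Thm-Kh-Pu}. Concretely, you apply Theorem~\ref{Thm-Kh-Pu} with $f\equiv 1$, $\beta[z]=P(z)$ and $c=0$, note that $\mathbbm{1}_{P(0)}=\mathbbm{1}_{\{0\}}$, and sum Theorem~\ref{Euler-Ch} over the finitely many relevant $u$.

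The problem is your hypothesis check, which settles the one nonformal point with a false claim. The Khovanskii--Pukhlikov identity does \emph{not} hold for an arbitrary lattice chain $\alpha$ with $\beta[z]$ ranging over $\mathcal{P}^*(\Sigma)$. Take $\Sigma$ the fan of $\P^2$ (rays $\bv_1,\bv_2,\bv_3=-\bv_1-\bv_2$) and $\alpha=\mathbbm{1}_Q$ with $Q=\conv\{(0,0),(1,0)\}$. Put $s=z_1+z_2+z_3$; then $P(z)$ is a right triangle with legs $s$, so $I(\alpha*\mathbbm{1}_{P(z)})=s^2/2+s$. On functions of $s$, $\Todd(\partial/\partial z)$ acts as $1+\frac32\frac{d}{ds}+\frac{d^2}{ds^2}+\cdots$. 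This gives $5/2$ at $z=0$, whereas $S(\alpha)=2$. The failure comes from the support function $\max(0,x_1)$ of $Q$ not being linear on $\cone(\bv_1,\bv_3)$.

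Your detour through $\Sigma'$ does not repair this. The identity on $\Sigma'$ involves the Todd operator in all variables $z'\in\R^{\Sigma'(1)}$, including derivatives transverse to the subspace of support numbers pulled back from $\Sigma$. Restricting to that subspace therefore does not produce $\Todd(\partial/\partial z)$. Moreover, for a virtual polytope not linear on $\Sigma$, such as an individual $\alpha_i$, the $\Sigma$-version of the formula can fail, as above.

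What you actually need is that $\alpha_\E$ itself is adapted to $\Sigma$. This holds because $\h_\E$ is by definition multi-valued linear on every cone of $\Sigma$; passing to the order statistics $\h_i$, which are not linear on $\Sigma$, loses exactly this property. Concretely, take $\Sigma$ smooth projective. The construction of \S\ref{subsec-split-res} (equation~\eqref{equ-alt-h-E} and the corollary after Theorem~\ref{th-K-class-resolution}) gives $\alpha_\E=\sum_{k,i}(-1)^k\mathbbm{1}_{P(z^{(k,i)})}$, where $z^{(k,i)}\in\Z^{\Sigma(1)}$ are the support numbers of the line bundles $\LL_{i,k}$. Since support functions add under $*$, $\alpha_\E[z]=\sum_{k,i}(-1)^k\mathbbm{1}_{P(z+z^{(k,i)})}$. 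Applying the virtual-polytope case of Theorem~\ref{Thm-Kh-Pu} at each $c=z^{(k,i)}$ and summing gives $S(\alpha_\E)$.

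Finally, the Todd operator in this form requires $\Sigma$ to be unimodular. You never raise this hypothesis, and the paper leaves it implicit. For a general complete fan, pass to a smooth projective refinement: this changes neither $\alpha_\E$ (by definition) nor $\chi(X_\Sigma,\E)$ (Theorem~\ref{th-toric-pull-back}). Then take the Todd operator in the refined variables.
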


\begin{example}[Fano plane]
        We describe a tropical toric vector bundle $\E$ over $\P^2$ built from the Fano plane. (Figure \ref{fig-Fano}, also see \cite[Example 4.10]{KM-TVBs-valuations}).

\begin{figure}[ht]
\begin{tikzpicture}
\begin{scope}

\draw (30:2)  -- (210:4)
        (150:2) -- (330:4)
        (270:2) -- (90:4)
        (90:4)  -- (210:4) -- (330:4) -- cycle
        (0:0)   circle (2);

\filldraw[black] (0:0) circle (2pt) node[anchor=west]{$w$};

\filldraw[black] (30:2) circle (2pt) node[anchor=west]{$z_1$};
\filldraw[black] (150:2) circle (2pt) node[anchor=east]{$z_2$};
\filldraw[black] (270:2) circle (2pt) node[anchor=north]{$z_3$};

\filldraw[black] (90:4) circle (2pt) node[anchor=south]{$y_3$};
\filldraw[black] (210:4) circle (2pt) node[anchor=east]{$y_1$};
\filldraw[black] (330:4) circle (2pt) node[anchor=west]{$y_2$};

\end{scope}
\end{tikzpicture}
\caption{The Fano plane.} \label{fig-Fano}
\end{figure}
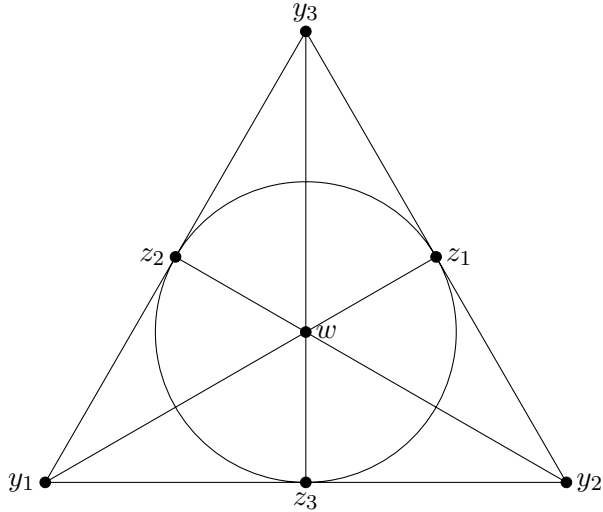
   
Let $\rho_1, \rho_2, \rho_3$ be the rays of the fan of $\P^2$ with ray generators $\{\bv_1,\bv_2,\bv_3\}$ where $\bv_1,\bv_2$ are the standard basis of $N$ and $\bv_3=-\bv_1-\bv_2$. To describe a tropical vector bundle over $\P^2$, it suffices to find three flags of flats in the Fano plane such that any pair of flags shares a common adapted basis.  We let decreasing filtrations by flats:
$$F^{\rho_i}_r =
\begin{cases}
\G & r \leq 0,\\
\{w,y_i,z_i\} & 0 < r \leq 1,\\
\{y_i\} & 1 < r \leq 2,\\
\emptyset & 2 < r.
\end{cases}$$
For $\rho_i$ and $\rho_j$ the filtrations share a common adapted basis
$B_{ij}=\{y_i,y_j,w\}$.
    This information is encoded in the following diagram:

\begin{center}
\begin{tabular}{ c|ccccccc } 
  & $y_1$ & $y_2$ & $y_3$ & $z_1$ & $z_2$ & $z_3$ & $w$\\ 
 \hline
 $\rho_1$ & 2 & 0 & 0 & 1 & 0 & 0 & 1\\ 
 $\rho_2$ & 0 & 2 & 0 & 0 & 1 & 0 & 1\\ 
 $\rho_3$ & 0 & 0 & 2 & 0 & 0 & 1 & 1\\ 
\end{tabular}
\end{center}
The parliament of polytopes are as in the Figure \ref{fig-PARLIAMENT}.

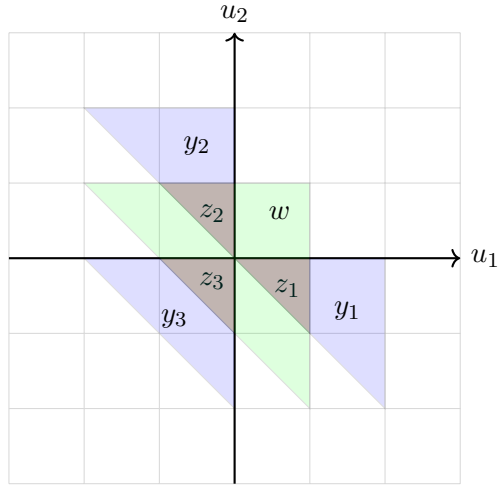
\begin{figure}[ht]
\begin{tikzpicture}
\draw[step=1, gray!30] (-3,-3) grid (3,3);
\draw[->, thick] (-3,0) -- (3,0) node[right] {$u_1$};
\draw[->, thick] (0,-3) -- (0,3) node[above] {$u_2$};

\draw[fill=red,opacity=0.23] (-1,0) -- (0,0) -- (0,-1) -- cycle;
\node at (-0.3,-0.3) {$z_3$};

\draw[fill=red,opacity=0.23] (0,1) -- (0,0) -- (-1,1) -- cycle;
\node at (-0.3,0.6) {$z_2$};

\draw[fill=red,opacity=0.23] (0,0) -- (1,0) -- (1,-1) -- cycle;
\node at (0.7,-0.4) {$z_1$};

\draw[fill=blue,opacity=0.13] (-2,0) -- (0,0) -- (0,-2) -- cycle;
\node at (-0.8,-0.8) {$y_3$};

\draw[fill=blue,opacity=0.13] (0,2) -- (0,0) -- (-2,2) -- cycle;
\node at (-0.5,1.5) {$y_2$};

\draw[fill=blue,opacity=0.13] (0,0) -- (2,0) -- (2,-2) -- cycle;
\node at (1.5,-0.7) {$y_1$};

\draw[fill=green,opacity=0.13] (-2,1) -- (1,1) -- (1,-2) -- cycle;
\node at (0.6,0.6) {$w$};

\end{tikzpicture}
\caption{The parliament of polytopes for $\E$.} \label{fig-PARLIAMENT}
\end{figure}

 We compute $\displaystyle\sum_{u\in M}h^0(u) = 27$ by the definition $h^0(u) = \rank \{e\in \G : u\in P_e\}$. \\

To compute $\alpha_\E$, let $\{\e_1,\e_2\}$ be the standard basis of $M$. For the full dimensional cones, $\sigma_1= \cone(\bv_2,\bv_3), \sigma_2=\cone(\bv_1,\bv_3),\sigma_3=\cone(\bv_1,\bv_2),$  the multi-sets of characters are
$u(\sigma_1)=\{-2\e_1+2\e_2,-2\e_1,-2\e_1+\e_2\}, u(\sigma_2)=\{2\e_1-2\e_2,-2\e_2,\e_1-2\e_2\}$, and $u(\sigma_3)=\{2\e_1,2\e_2,\e_1+\e_2\}$.
For $x=(x_1,x_2) \in N_\R$, 
$$\h_\E(x) = \begin{cases}
    [-2x_1+2x_2]+[-2x_1]+[-2x_1+x_2]& \text{if}\,\,x\in \sigma_1\\
    [2x_1-2x_2]+[-2x_2]+[x_1-2x_2]&\text{if}\,\, x\in \sigma_2\\
    [2x_1]+[2x_2]+[x_1+x_2]&\text{if}\,\, x\in \sigma_3.
\end{cases}$$
Consider the piecewise linear functions $\h_i$ for $1\leq i\leq 3$ as in the proof of Theorem \ref{Euler-Ch}. The virtual polytope $P_i$ associated to $\h_i$ is given by
$$P_i = \{u\in M_\R \mid \langle u,\bv_\rho\rangle\leq \h_i(\bv_\rho), \forall\rho\in \Sigma(1)\}.$$
Since $\h_2,\h_3$ are convex, $P_2,P_3$ are convex polytopes with $\displaystyle\sum_{u\in M} \mathbbm{1}_{P_2}=10$ and $\displaystyle\sum_{u\in M}\mathbbm{1}_{P_3}=19$.

The function $h_1$ is not convex. Write $\h_1=\h^+-\h^-$ with convex
support functions $\h^\pm$. In the convex chain algebra this gives
$$\mathbb{1}_{P_1}=\mathbb{1}_{P^+}*(\mathbb{1}_{P^-})^{-1},$$
and Minkowski inversion yields
$$\sum_{u\in M}\mathbb{1}_{P_1}(u)=-2.$$
Hence, $$\sum_{u\in M}\alpha_\E(u) =\sum_{u\in M} \mathbbm{1}_{P_1}(u)+\mathbbm{1}_{P_2}(u)+\mathbbm{1}_{P_3}(u)=27 =\sum_{u\in M}h^0(\E)_u$$
and the higher cohomologies vanish. 

\end{example}

\section{Global sections and Euler characteristic of the tautological bundle of a matroid}\label{sec-global sec-Euler c-taut}

We recall from \cite{KM-trop-vb} that each matroid $\M$ comes with a canonical tropical vector bundle $\E_\M$ on the permutahedral toric variety called \emph{tautological bundle on $\M$}. It was conjectured in \cite{KM-trop-vb} that $h^0(\E_\M)$ and $\chi(\E_\M)$ coincide. This was interpreted as \emph{higher cohomologies of $\E_\M$ vanish}. In this section, we give a proof of this.

We show that the higher cohomologies of this bundle vanish. Recall that the $m$-dimensional \emph{permutahedron} is the convex hull:
$$P_m = \conv\{\pi(1,\ldots,m)\mid \pi \in S_m\}.$$
The \emph{permutahedral fan}, $\Sigma_m$ is the normal fan of the permutahedron in $\R^m/\R\e_{[m]}.$ The corresponding toric variety $X_m$ is the \emph{permutahedral toric variety}. 

We define the \emph{lifted} permutahedral fan $\widetilde{\Sigma_m}$ to be the preimage of $\Sigma_m$ under the quotient map. The fan $\widetilde{\Sigma_m}$ encodes the action of $(\k^*)^m$ on $X_m$. 

\subsection{The map $\Phi_\M$ and the bundle $\E_\M$}
Let $w \in \R^m$. Consider the associated $\R$-filtration $(F_{w\geq k})_{k \in \R}$ by flats on $\G=[m]$ where
$$F_{w \geq k} = \Span\{ i \in [m] \mid w_i \geq k\}.$$
The filtration $(F_{w\geq k})_{k \in \R}$ determines a point $w' \in \widetilde{\Berg}(\M) \subset \R^m$ by:
$$w'_i = \sup\{k \mid i \in F_{w \geq k} \}.$$

Thus we obtain a canonical projection map:
$$\Phi_\M: |\widetilde{\GF}(\M)| \to \widetilde{\Berg}(\M), \quad \Phi_\M(w) = w'.$$

Moreover, by definition $\Phi_\M$ is the identity when restricted to $\widetilde{\Berg}(\M)$, so $\Phi_\M\circ \Phi_\M = \Phi_\M$. Now we can show that $\Phi_\M$ is in fact a piecewise linear map. For $\sigma \in \widetilde{\GF}(\M)$ consider the map $\Phi_\sigma: \sigma \to \widetilde{A}_{B_\sigma} = \sigma \cap \widetilde{\Berg}(\M)$ defined as follows. For $e_i \in \M$ let $C_i$ be the circuit in $\{e_i\} \cup B_\sigma$ containing $e_i$. Then for $w=(w_1, \ldots, w_m) \in \sigma$ we put:
$$\Phi_\sigma(w)_i = 
\begin{cases} 
w_i,  \quad e_i \in B \\
\min\{w_j \mid j \in C_i \setminus \{i\} \}, \quad i \notin B_\sigma.
\end{cases}$$

We recall that the $\widetilde{GF}(\M)$ is refined by the permutahedral fan $\widetilde{\Sigma_m}$. Thus we can consider $\Phi_\M$ as a piecewise linear map from $|\widetilde{\Sigma_m}|$ to $\widetilde{\Berg}(\M)$.

\begin{definition}[Tautological tropical vector bundle of a matroid]  \label{def-tautological-mb}
For a matroid $\M$, we call the tropical toric vector bundle $\E_\M$ given by the piecewise linear map $\Phi_\M: |\widetilde{\Sigma_m}| \to \widetilde{\Berg}(\M)$, the \emph{tautological tropical vector bundle of $\M$}. 
\end{definition}

We adopt the convention that $\widetilde{\GF}(\M)$ is the \emph{outer} normal fan of of the matroid polytope of $\M$.  In particular, the maximal faces of $\widetilde{\GF}(\M)$ are in bijection with bases $\B$, and the basis $\B_C$ of a maximal face $C \in \widetilde{\GF}(\M)$ is the set of tuples $w$ for which $\B$ is the basis of maximal total weight.  We let $\B_\sigma$ denote the basis associated to $\sigma \in \widetilde{\Sigma_m}$. Recall that $\e_i$ denotes the $i$-th standard basis vector of $\R^m$ and $\e_S = \sum_{i \in S} \e_i$ for $S \subset [m]$.

\begin{proposition}\label{pro-tautologicalGG}
    For $\sigma \in \widetilde{\Sigma_m}$, the adapted basis for $\Phi_\M$ restricted to $\sigma$ is $\B_\sigma$, and the character $u(\sigma)$ is $\{\e_i\mid i \in B_\sigma\}$. The diagram $D(\E_\M)$ is the matrix with $S, i$-the entry equal to $1$ if $i$ is in the span of the $j$ for $j \in S$, and $0$ otherwise.
\end{proposition}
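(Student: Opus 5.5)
The plan is to compute $\Phi_\M$ explicitly on each cone of $\widetilde{\Sigma_m}$ and read off the basis, the characters and the diagram. First I will fix notation. A cone of $\widetilde{\Sigma_m}$ (modulo the lineality $\R\e_{[m]}$) is $\sigma=\cone\{\e_{S_1},\dots,\e_{S_k}\}$ for a chain $S_1\subsetneq\cdots\subsetneq S_k$ of subsets of $[m]$. The rays are $\e_S$ for $\emptyset\neq S\subseteq[m]$. The row of the diagram indexed by $S$ is $\Phi_\M(\e_S)$.

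\textbf{Computing the diagram.} For $w=\e_S$, the filtration $F_{w\geq k}$ is $\G$ for $k\leq 0$, $\Span(S)$ for $0<k\leq 1$, and $\Span(\emptyset)$ (the closure of the empty set) for $k>1$. Hence
$$w'_i=\sup\{k\mid i\in F_{w\geq k}\}=\mathbbm{1}[\,i\in\Span(S)\,]$$
after discarding loops. So $\Phi_\M(\e_S)=\e_{\overline{S}}$, where $\overline{S}$ is the closure of $S$. This is exactly the stated description of $D(\E_\M)$.

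\textbf{Identifying the basis.} Next I will show that $\Phi_\M$ restricted to $\sigma$ lands in $\widetilde A_{\B_\sigma}$ and is linear there.
\begin{itemize}
\item Take $w$ in the relative interior of $\sigma$. Up to adding $\e_{[m]}$, $w$ is a positive combination of the $\e_{S_j}$. The weights are therefore constant on the layers $S_j\setminus S_{j-1}$ and strictly decreasing along the chain.
\item The basis of maximal total $w$-weight is produced by the greedy algorithm. Because the weight order is constant on $\sigma$, greedy gives the same basis $\B_\sigma$ for every such $w$. So $\sigma$ lies in the maximal Gröbner cone $\widetilde\sigma_{\B_\sigma}$.
\item By greedy optimality, every $S_j$ satisfies $\Span(S_j)=\Span(S_j\cap \B_\sigma)$. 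Hence each flat $\overline{S_j}$ is spanned by a subset of $\B_\sigma$. The images $\e_{\overline{S_j}}$ form a flag of flats adapted to $\B_\sigma$, and so lie in $\widetilde A_{\B_\sigma}$.
\item Linearity on $\sigma$ follows from the explicit formula $\Phi_\sigma$ above. On $\B_\sigma$ it is the identity. Off $\B_\sigma$ it is a minimum over the fundamental circuit $C_i$, and this minimum is attained at a fixed index throughout $\sigma$, because the ordering of the coordinates of $w$ is fixed (up to ties, which still give linear formulas on $\sigma$).
\end{itemize}

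\textbf{Reading off the characters.} On $\sigma$, for $i\in\B_\sigma$ the coordinate of $\Phi_\M(x)$ in the apartment $\widetilde A_{\B_\sigma}\cong\R^r$ is $x_i=\langle \e_i,x\rangle$. So the characters are $u(\sigma)=\{\e_i\mid i\in\B_\sigma\}$.

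\textbf{Main obstacle.} The delicate step is checking that the greedy basis is genuinely constant on closed cones and compatible with faces. On lower-dimensional cones ties occur, and a consistent choice of $\B_\sigma$ is needed. I will handle this by choosing $\B_\sigma$ from a maximal cone containing $\sigma$. The key fact to verify is that any basis obtained by greedy on a chain refining $(S_j)$ spans each $\overline{S_j}$. This gives the adaptedness needed for the diagram entries $\mathbbm{1}[\,i\in\Span(S)\,]$ to match the characters on every ray of $\sigma$.
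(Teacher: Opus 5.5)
Your argument is correct. The paper gives no proof of this proposition; it is recalled from \cite{KM-trop-vb}. The surrounding text, however, sets up exactly the route you take: the filtration description of $\Phi_\M$, the circuit formula $\Phi_\sigma$, and the fact that $\widetilde{\Sigma_m}$ refines $\widetilde{\GF}(\M)$.

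The following parts of your proposal are sound:
\begin{itemize}
\item the computation $\Phi_\M(\e_S)=\e_{\overline{S}}$;
\item the greedy description of $\B_\sigma$, including fixing ties by choosing a maximal cone containing $\sigma$;
\item the argmin observation: on the closed cone $\sigma$, the minimum over $C_i\setminus\{i\}$ is attained uniformly at an element in the latest layer $S_j\setminus S_{j-1}$ that meets $C_i\setminus\{i\}$.
\end{itemize}

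The one step you cite rather than prove is that $\Phi_\M$ agrees with $\Phi_\sigma$ on $\sigma$. The paper asserts this but does not justify it. It follows from the greedy fact you already use: if $B$ has maximal $w$-weight, then $\Span\{j : w_j\ge k\}=\Span(B\cap\{j: w_j\ge k\})$ for every $k$.
\begin{itemize}
\item For $i\in B$ and $k>w_i$, independence of $B$ gives $i\notin\Span(B\cap\{w\ge k\})$, so $w'_i=w_i$.
\item For $i\notin B$ and $B'\subseteq B$, we have $i\in\Span(B')$ if and only if $C_i\setminus\{i\}\subseteq B'$. Hence $w'_i=\min_{j\in C_i\setminus\{i\}} w_j$.
\end{itemize}
The same observation shows that every level set $\{w'\ge k\}=\Span\{w\ge k\}$ is spanned by a subset of $B$. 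So $\Phi_\M(w)\in\widetilde{A}_B$ holds directly for every $w$ in the Gröbner cone, not only via ray generators plus linearity.

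This also clarifies the role of the refinement. The characters $u(\sigma)=\{\e_i\mid i\in\B_\sigma\}$ come only from the first case $w'_i=w_i$. So linearity in the apartment coordinates $\widetilde{A}_B\cong\R^r$ already holds on the whole Gröbner cone. Passing to $\widetilde{\Sigma_m}$ is needed only to make the non-basis coordinates linear, i.e.\ to make $\Phi_\M$ linear as a map into $\R^m$, which is what your argmin bullet supplies.

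Finally, your restriction to non-loops is genuinely needed. For a loop $i$ the supremum defining $w'_i$ is $+\infty$, so the diagram entry $1$ for loops is a convention, not an output of the formula.
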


\subsection{Vanishing of higher cohomologies of tautological bundle} \label{subsec-coho-tauto}

\begin{proposition}\label{prop-h0-tautological bundle}
$$h^0(X_m,\E_\M)_u= \begin{cases}
1 &  \text{if}\,\,u= \e_i \,\,\text{for a non-loop}\,\, i \in E\\
0 & otherwise.
\end{cases}$$
    
\end{proposition}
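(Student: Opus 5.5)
By Definition \ref{def-sheaf of section}, $H^0(X_m,\E_\M)_u=\bigcap_{\rho}F^\rho_{\langle u,\bv_\rho\rangle}=\{i\in\G \mid u\in P_i\}$. The plan is to compute each polytope $P_i$ of the parliament of $\E_\M$ explicitly, and then read off the rank of the resulting set.

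First I would describe the rays of $\widetilde{\Sigma_m}$. These are generated by the vectors $\e_S$ for nonempty $S\subseteq[m]$, together with the lineality directions $\pm\e_{[m]}$. By Proposition \ref{pro-tautologicalGG}, the $i$-th column of the diagram has entry $1$ at the ray $\e_S$ if $i\in\overline{S}$ (the closure of $S$) and $0$ otherwise. On $\pm\e_{[m]}$ the value is $\pm1$ for a non-loop $i$, since $\Phi_\M(\e_{[m]})=\e_{\overline{[m]}}=\e_{[m]}$. Hence
$$P_i=\{u\in\R^m \mid \langle u,\e_S\rangle\le \mathbbm{1}[i\in \overline S]\ \ \forall S\neq\emptyset,\ \ \langle u,\e_{[m]}\rangle=\mathbbm{1}[i\text{ non-loop}]\}.$$
I would note here that if $i$ is a loop, then $i\in\overline{\emptyset}$, so every closure contains $i$. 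In that case the column is all ones, and $i$ contributes rank $0$ regardless, so loops can be ignored.

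Next I would show that $P_i\cap\Z^m=\{\e_i\}$ for a non-loop $i$. Taking $S=\{j\}$ with $j\ne i$ and $j$ not parallel to $i$ gives $u_j\le 0$; for $j=i$ it gives $u_i\le1$. For $j$ parallel to $i$ the singleton bound is only $u_j\le 1$. In that case the two-element set $S=\{i,j\}$ still gives $u_i+u_j\le1$, and $S=\{j\}\cup(\text{others})$ gives similar bounds. The cleanest route is to prove directly that $u=\e_i$ is the unique lattice point. Summing coordinates gives $\sum_j u_j=1$. For each $j$, take $S=[m]\setminus\{j\}$: if $i\notin\overline{[m]\setminus\{j\}}$ (that is, $j=i$ is a coloop), we get $1-u_j\le0$. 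More generally, I would use $S=[m]\setminus\{j\}$ to obtain $u_j\ge 1-\mathbbm{1}[i\in\overline{[m]\setminus j}]\ge 0$. So all $u_j\ge0$ with total $1$, and $u$ is a standard vector $\e_k$. For $u=\e_k$, the constraint at $S=\{k\}$ forces $i\in\overline{\{k\}}$, so $k$ is $i$ or is parallel to $i$. Thus $H^0(X_m,\E_\M)_u$ is empty unless $u=\e_k$ for some non-loop $k$. For $u=\e_k$ with $k$ a non-loop, the constraint $\langle\e_k,\e_S\rangle\le\mathbbm{1}[i\in\overline S]$ says $k\in S\Rightarrow i\in\overline S$, and this holds exactly when $i\in\overline{\{k\}}$. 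So $H^0(X_m,\E_\M)_{\e_k}=\overline{\{k\}}\setminus\{\text{loops}\}$, the parallel class of $k$, which has rank $1$.

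The main obstacle is bookkeeping over the lifted fan. I need to confirm that the diagram rows really are indexed by $\e_S$ and $\pm\e_{[m]}$ with the stated values, including how loops and the lineality direction interact with the definition of $\Phi_\M$. I also need to handle parallel elements carefully, since they cause $P_i$ to contain several standard vectors. The rank computation is insensitive to this, because a parallel class has rank $1$.
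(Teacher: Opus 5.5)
Your argument is correct and is essentially the paper's proof. Both use the constraints at $\e_{\{j\}}$ and $\e_{[m]\setminus\{j\}}$ to show that the lattice points of $P_i$ are exactly the $\e_k$ with $k$ parallel to $i$, so $H^0(X_m,\E_\M)_{\e_k}$ is the parallel class of $k$ up to loops, of rank $1$; beyond the paper, you derive $\sum_j u_j=1$ from the lineality direction $\pm\e_{[m]}$ (the paper simply restricts to that hyperplane) and dispose of loops by noting they do not affect rank. Fix the opening claim of your third paragraph: for a non-loop $i$, $P_i\cap\Z^m=\{\e_k \mid k\in\overline{\{i\}},\ k \text{ not a loop}\}$, not $\{\e_i\}$, and this is what your argument actually proves.
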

\begin{proof}
    For a loop, $i \in E$, by Definition \ref{def-parliament}, we have $P_i =\{u \in \Z^m \mid \langle u,\e_S\rangle\leq (\e_{\Span(S)})_i, \forall S \subseteq \G\}$. Choose $u \in \Z^m$ with $\sum u_j = 1$. Then,
    $u \in P_i$ if and only if $u = \e_j$ for some $j \in \G$. This shows that $P_i \cap \{u\in \Z^m\mid\sum u_k = 1\} = \{\e_j \in \Z^m \mid j \in \G\}$. 

    For a non-loop, $i \in \G$, let $C(i) := \{ j \in \G\mid j \,\textup{ parallel to }\, i\}$. Assume $u \in  P_i$ with $\sum u_j =1$. If $j\notin C(i)$, then $u_j \leq 0$ by taking $S= \{j\}$ in the definition of $P_i$. Also, taking $S = \G\setminus \{j\}$, we have $\displaystyle\sum_{k \neq j} u_k \leq 1$. Therefore, $u_j =0$ for all $j \notin C(i)$ and $\displaystyle\sum_{k\in C(i)} u_k =1$. Repeating the argument as in the case of a loop, we conclude $u\in P_i \cap \{u\in \Z^m \mid \sum u_k = 1\}$ if and only if $u = \e_j $ for some $j \in C(i)$. Hence,
$$h^0(X_m,\E_\M)_u= \begin{cases}
\rank(C(i)\cup \{\text{loops}\})=1 &  \text{if}\,\,u= \e_i \,\,\text{for a non-loop}\,\, i \in \G\\
\rank(\{\text{loops}\})=0 & \text{otherwise.}
\end{cases}$$
\end{proof}

    \begin{proposition}\label{prop-weighted-h0}
    Let $\sigma \in \widetilde{\Sigma_m}$ be a cone with its flag of subsets of $\G$, $\emptyset =S_0 \subsetneq S_1 \subsetneq \ldots \subsetneq S_{l(\sigma)}=\G$. Let $S_t$ be the first place where $\langle u ,{\bf e}_{S_t}\rangle = 1$. Then,
        $$h^0(U_\sigma,\E_\M)_u =\begin{cases}
            0 \quad\textup{if } \exists k, \langle u,{\bf e}_{S_k}\rangle>1\\
            \rank(S_t) \quad \textup{otherwise}.
        \end{cases}$$
    \end{proposition}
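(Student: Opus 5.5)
We compute $h^0(U_\sigma,\E_\M)_u$ directly from Definition \ref{def-sheaf of section}: it is the rank of $\bigcap_{\rho\in\sigma(1)} F^\rho_{\langle u,\bv_\rho\rangle}$. The rays of $\sigma$ are spanned by the vectors $\e_{S_k}$ for $1\le k\le l(\sigma)$; the ray $\e_{\G}$ lies in the lineality direction of the lifted fan but we include it as well. By Proposition \ref{pro-tautologicalGG}, the row of the diagram $D(\E_\M)$ indexed by $S$ is $\e_{\Span(S)}$. Hence the Klyachko filtration attached to the ray $\e_S$ is
$$F^{S}_i=\begin{cases}\G & i\le 0,\\ \Span(S) & i=1,\\ \text{loops}=\overline{\emptyset} & i\ge 2.\end{cases}$$
The whole computation therefore reduces to intersecting these three-step filtrations at the levels $i_k=\langle u,\e_{S_k}\rangle$.

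We split into cases according to the values $i_k$.

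\textbf{Case 1: some $i_k>1$.} Then the intersection is contained in the closure of the empty set, which is the set of loops. This flat has rank $0$, so $h^0=0$, matching the first case of the statement.

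\textbf{Case 2: all $i_k\le 1$.} The factors with $i_k\le 0$ contribute $\G$, and those with $i_k=1$ contribute $\Span(S_k)$. Since the $S_k$ are nested, so are their spans, and the intersection is $\Span(S_{t'})$, where $t'$ is the smallest index with $i_{t'}=1$. By definition $t'=t$, so the intersection is $\Span(S_t)$, which has rank $\rank(S_t)$.

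One point must be checked: a valid $t$ exists in Case 2. The last set is $S_{l(\sigma)}=\G$, and $\langle u,\e_\G\rangle$ is the total degree. This is presumably normalized to $1$ in the relevant weight space, consistent with Proposition \ref{prop-h0-tautological bundle}, so the value $1$ is attained. If instead every $i_k\le 0$, the intersection is all of $\G$ with rank $r$. This is exactly the case excluded by the phrase ``the first place where $\langle u,\e_{S_t}\rangle=1$''. I would state this convention explicitly.

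\textbf{Main obstacle.} The only delicate step is justifying the filtration formula from Proposition \ref{pro-tautologicalGG}. One must confirm that the diagram row for the ray $\e_S$ is $\e_{\Span(S)}$, i.e.\ that $\Phi_\M(\e_S)=\e_{\Span(S)}$. We verify this from the definition of $\Phi_\M$: $F_{\e_S\ge k}$ equals $\Span(S)$ for $0<k\le1$, equals $\G$ for $k\le 0$, and equals the loops for $k>1$. Hence $w'_i=1$ exactly for $i\in\Span(S)$, except for loops, which get $w'_i=\sup\emptyset$ among positive levels; we treat these as $+\infty$ or ignore them, since they do not affect rank. Once this is settled, the rest is the nested-intersection bookkeeping above.
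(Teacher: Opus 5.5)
Your proof is correct and is essentially the paper's argument. The paper checks elementwise that $u\in P_{\sigma,i}$, i.e.\ $\langle u,\e_{S_k}\rangle\le(\e_{\Span(S_k)})_i$ for all $k$, holds exactly when $i\in\Span(S_t)$, using minimality of $t$; this is the same computation as your intersection of the three-step flats $F^{S_k}_{i_k}$ via nestedness of the spans. You also make explicit three points the paper leaves implicit: the case where some $\langle u,\e_{S_k}\rangle>1$, the normalization $\sum_j u_j=1$ (which the paper only imposes at the start of its proof), and the harmless treatment of loops.
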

\begin{proof}
    We fix a character $u$ with $\sum_j u_j=1.$ and assume that $\langle u, \e_{S_k}\rangle \leq 1$ for all $k$. Let $i \in H^0(U_\sigma,\E_\M)_u=\{j \in \G \mid u \in P_{\sigma,j}\}.$ Then, $1= \langle u ,\e_{S_t} \rangle \leq (\e_{\Span(S_t)})_i$. By taking $k=t$, we conclude $i \in \Span(S_t)$. Conversely, if $i \in \Span(S_t)$, then $\langle u, \e_{S_k}\rangle \leq (\e_{\Span(S_k)})_i$ for all $k$ by the minimality of $t$.

\end{proof}
Now, we give a description of $u$-weighted Euler characteristic of the tautological bundle. For a character $u\in \Z^n $ with $\sum_j u_j = 1$ and a non-empty subset $S\subseteq \G$, we define a collection of flags of subsets of $\G$ denoted by $\Pi_u(S)$ as follows:
$$\Pi_u(S) = \{\pi \mid \exists t, S_t=S\,\text{appearing in the flag $\pi$ as the first place}\, \langle u,\e_{S_t}\rangle =1,\,\text{and} \langle u,\e_{S_j}\rangle \leq1 ,\,\forall j\}$$ and $\Pi_u(S) =\emptyset$ if $\langle u, \e_S\rangle \neq 1$ for every flat $S$ appearing in $\pi$.
\begin{corollary}\label{cor-weighted euler char}
\begin{equation}\label{eq-weighted-euler-char}
\chi(X_{m},\E_\M)_u=  \displaystyle\sum_{\sigma\in \Sigma_m} (-1)^{\codim(\sigma)}h^0(U_\sigma,\E_\M)_u = \sum_{\substack{S \subseteq \G \\ \langle u, \e_S \rangle = 1}} \sum_{\pi \in \Pi_u(S)} (-1)^{n-l(\pi)}\rank(S).    
\end{equation}

\end{corollary}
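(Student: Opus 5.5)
The first equality in \eqref{eq-weighted-euler-char} is simply Definition \ref{def-equiv-Euler-matriod-vb} applied to $\E_\M$ on $X_m$. The work is in the second equality, which comes from reorganizing the sum over cones using Proposition \ref{prop-weighted-h0}.

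First I fix the dictionary between cones and flags. Cones $\sigma$ of the (lifted) permutahedral fan correspond to flags $\pi=(\emptyset=S_0\subsetneq S_1\subsetneq\cdots\subsetneq S_{l(\pi)}=\G)$ of subsets of $\G$. The rays of $\sigma$ are the $\e_{S_k}$, and $\codim(\sigma)=n-l(\pi)$ with the paper's convention for $n$, where the lineality direction $\e_\G$ is counted consistently. Passing between $\Sigma_m$ and $\widetilde{\Sigma_m}$ only adds the lineality line $\R\e_{[m]}$. This is harmless because we restrict to characters $u$ with $\sum_j u_j=1$, i.e.\ $\langle u,\e_\G\rangle=1$, which is the degree in which sections of $\E_\M$ live. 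Each term $(-1)^{\codim(\sigma)}h^0(U_\sigma,\E_\M)_u$ can then be rewritten as $(-1)^{n-l(\pi)}h^0(U_{\sigma_\pi},\E_\M)_u$.

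Next I substitute Proposition \ref{prop-weighted-h0}. If some $\langle u,\e_{S_k}\rangle>1$, the term vanishes. Otherwise, since $\langle u,\e_{S_{l(\pi)}}\rangle=\langle u,\e_\G\rangle=1$, there is a well-defined first index $t$ with $\langle u,\e_{S_t}\rangle=1$. The term then equals $(-1)^{n-l(\pi)}\rank(S_t)$. Every flag contributing a nonzero term therefore belongs to exactly one set $\Pi_u(S)$, namely the one with $S=S_t$, and necessarily $\langle u,\e_S\rangle=1$. Conversely, every $\pi\in\Pi_u(S)$ is a flag with all $\langle u,\e_{S_j}\rangle\le 1$ whose first level-one set is $S$.

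The sets $\Pi_u(S)$, for $S$ ranging over subsets with $\langle u,\e_S\rangle=1$, therefore partition the contributing flags. Grouping the sum over $\sigma$ by $S=S_t$ gives exactly the double sum on the right of \eqref{eq-weighted-euler-char}.

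The only delicate point is bookkeeping rather than mathematics. I need to check that the sign $(-1)^{\codim(\sigma)}$ matches $(-1)^{n-l(\pi)}$ under the lifted/unlifted convention. I also need to handle the case $\sum_j u_j\neq 1$, where both sides should vanish or the statement is restricted to that slice. I would settle both by comparing the dimension count of $\sigma_\pi$ in $\R^m/\R\e_{[m]}$, which is $l(\pi)-1$, with the ambient dimension.
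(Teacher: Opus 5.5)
Your argument is correct and matches the paper's derivation, which the paper leaves implicit: substitute Proposition~\ref{prop-weighted-h0} into Definition~\ref{def-equiv-Euler-matriod-vb} and group the flags with all $\langle u,\e_{S_k}\rangle\le 1$ by their first level-one set $S_t$; the sign is $(-1)^{\codim(\sigma_\pi)}=(-1)^{(m-1)-(l(\pi)-1)}$, so $n=m$ in the statement. The one thing to fix is your hedge that both sides vanish when $\sum_j u_j\neq 1$, which is false: if $\Pi_u(S)$ is extended literally, for $m=2$ and $u=(1,-1)$ the right-hand side equals $\rank(\{1\})$, so the identity should simply be read on the slice $\sum_j u_j=1$, where the paper defines $\Pi_u(S)$ and proves Proposition~\ref{prop-weighted-h0}.
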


\begin{lemma} \label{lemma-mobius function of poset of flags}
    Let $\Pi^m$ be the poset of all flags of subsets of $[m]$. Then, $\displaystyle\sum_{\pi\in \Pi^m} (-1)^{l(\pi)}=(-1)^m.$
\end{lemma}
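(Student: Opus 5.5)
The plan is to read a flag in $\Pi^m$ as a chain $\emptyset=S_0\subsetneq S_1\subsetneq\cdots\subsetneq S_{l}=[m]$ with $l=l(\pi)$. This is the convention used for the cones of $\widetilde{\Sigma_m}$ in Proposition \ref{prop-weighted-h0}. Such a chain is the same thing as an ordered set partition of $[m]$ into the $l$ nonempty blocks $S_1,\ S_2\setminus S_1,\ \ldots,\ S_l\setminus S_{l-1}$. So the statement amounts to the identity $\sum_{l}(-1)^l\, l!\,S(m,l)=(-1)^m$, where $S(m,l)$ are Stirling numbers of the second kind. I will prove it by a first-block recursion and induction on $m$, so no Stirling-number identities are needed.

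Set $f(m)=\sum_{\pi\in\Pi^m}(-1)^{l(\pi)}$ for $m\ge 0$, with the convention $f(0)=1$ coming from the single empty flag of length $0$. For $m\ge1$, I group flags by their first block $S_1$, a nonempty subset of size $k$. Removing $S_1$ gives a bijection between such flags and the flags of subsets of $[m]\setminus S_1$, which is a set of size $m-k$; the length drops by exactly one. This yields
\[
f(m) \;=\; -\sum_{k=1}^{m}\binom{m}{k} f(m-k).
\]

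By the induction hypothesis $f(m-k)=(-1)^{m-k}$, so
\[
f(m) \;=\; -\sum_{k=1}^{m}\binom{m}{k}(-1)^{m-k} \;=\; -\bigl((1-1)^m-(-1)^m\bigr) \;=\; (-1)^m
\]
for $m\ge1$. This completes the induction.

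The only point needing care is the bookkeeping of conventions: whether $S_0=\emptyset$ and $S_l=[m]$ are counted in $l(\pi)$, and whether the trivial flag $\emptyset\subsetneq[m]$ (length $1$) is included. If the paper's $l(\pi)$ were shifted by a constant, the sign would change globally. I will fix the convention to agree with $\codim(\sigma)=n-l(\pi)$ as used in Corollary \ref{cor-weighted euler char}.

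As a sanity check, $m=1$ gives the single flag $\emptyset\subsetneq[1]$ with $l=1$, and the sum is $-1$. For $m=2$ the sum is $(-1)^1+2(-1)^2=1$.

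An alternative route would be to note that the flags correspond to the cones of the complete permutahedral fan and then apply the alternating-sum identity $\sum_{\sigma}(-1)^{\codim\sigma}=1$ used in the proof of Theorem \ref{th-K-class-resolution}. I prefer the elementary recursion because it avoids fixing the dimension and codimension conventions of the lifted fan.
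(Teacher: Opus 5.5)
Your proof is correct, but it takes a different route from the paper.

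\textbf{The paper's route.} The paper proves the lemma in one line: it is ``the Brianchon--Gram formula for the permutahedral fan.'' In effect, the flag $\emptyset=S_0\subsetneq\cdots\subsetneq S_l=[m]$ indexes a cone of the complete permutahedral fan of codimension $m-l$, with the trivial flag as the zero cone. Equivalently, it indexes a face of the permutahedron $P$ of dimension $m-l$. Evaluating $\mathbbm{1}_P=\sum_{\sigma}(-1)^{\codim(\sigma)}\mathbbm{1}_{C_\sigma}$ at a point of $P$, which lies in every tangent cone, gives $\sum_\sigma(-1)^{\codim(\sigma)}=1$. This is Euler's relation for $P$, and it is exactly the claim.

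\textbf{Your route.} You encode flags as ordered set partitions and prove $\sum_l(-1)^l\,l!\,S(m,l)=(-1)^m$ by the first-block recursion $f(m)=-\sum_{k\ge 1}\binom{m}{k}f(m-k)$ together with the binomial theorem. The bijection is right: stripping off $S_1$ lowers the length by one, and the convention $f(0)=1$ correctly accounts for $S_1=[m]$. The strong induction also goes through.

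\textbf{Comparison.} The paper's argument is shorter given the machinery already in place. It also explains why the identity holds: it is the Euler characteristic of a complete fan, the same fact $\sum_{\sigma\in\Sigma}(-1)^{\codim(\sigma)}=1$ used in the proof of Theorem~\ref{th-K-class-resolution}. However, it silently relies on the flag/cone/face correspondence and on the codimension count $m-l(\pi)$. Your argument is self-contained and makes the convention check explicit. That check is the genuinely delicate point here, since shifting $l(\pi)$ by one flips the sign. The convention you fix is consistent with how the lemma is used: in the proof of Theorem~\ref{thm-vasnishing of higher coh-taut}, flags beginning with $\{i\}$ correspond to $\Pi^{m-1}$ with the length lowered by one.
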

\begin{proof}
    This is the Brianchon-Gram formula for the permutahedral fan.
\end{proof}
\begin{theorem}[Vanishing of higher cohomology of tautological bundle]\label{thm-vasnishing of higher coh-taut}
    $$\chi(X_{m},\E_\M)_u= h^0(X_{m},\E_\M)_u.$$
\end{theorem}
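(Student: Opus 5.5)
The plan is to start from the formula of Corollary \ref{cor-weighted euler char}, exchange the order of summation, and show that almost all terms cancel through a Möbius-function (reduced Euler characteristic) computation on posets of subsets of $\G$. What survives should match Proposition \ref{prop-h0-tautological bundle}.

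\emph{Reductions.} First dispose of characters with $\sum_j u_j\neq 1$. The vector $\e_{\G}$ spans the lineality space of $\widetilde{\Sigma_m}$, and every cone contains it. Hence $u\in P_{\sigma,i}$ forces $\langle u,\e_\G\rangle\le 1$ and $\langle u,-\e_\G\rangle\le -1$. So every $h^0(U_\sigma,\E_\M)_u$ and $h^0(X_m,\E_\M)_u$ vanishes unless $\sum_j u_j=1$, and both sides of the theorem are $0$ in that case. Now fix $u$ with $\sum u_j=1$. By Proposition \ref{prop-weighted-h0} and Corollary \ref{cor-weighted euler char},
\[
\chi(X_m,\E_\M)_u=\sum_{S:\ \langle u,\e_S\rangle=1}\rank(S)\,c_u(S),\qquad c_u(S):=\sum_{\pi\in\Pi_u(S)}(-1)^{n-l(\pi)}.
\]

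\emph{Factorization.} A flag $\pi\in\Pi_u(S)$ splits at $S$ into two independent pieces. The lower piece is a chain of sets $T\subsetneq S$ with $\langle u,\e_T\rangle\le 0$ (integrality turns ``$\le 1$ and $\ne 1$'' into ``$\le 0$''). The upper piece is a chain of sets $T\supsetneq S$ with $\langle u,\e_T\rangle\le 1$, that is, $\langle u,\e_{T\setminus S}\rangle\le 0$. Therefore $c_u(S)$ factors, up to sign, as a product $\mu_-(S)\,\mu_+(S)$. Here $\mu_-(S)$ is the Möbius function $\mu(\emptyset,S)$ of the poset $\{T\subseteq S:\langle u,\e_T\rangle\le 0\}\cup\{S\}$. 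Similarly, $\mu_+(S)$ is the Möbius function $\mu(S,\G)$ of the poset of $T\supseteq S$ with $\langle u,\e_{T\setminus S}\rangle\le 0$, with $\G$ adjoined as top. Lemma \ref{lemma-mobius function of poset of flags} is the unrestricted case, where the boolean lattice gives $\pm\delta$.

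\emph{Computing the Möbius functions.} I would compute each factor by a closure/cone-point argument. Let $P=\{j:u_j>0\}$ and $Q=\{j:u_j<0\}$. In the upper poset, the map $T\mapsto T\cup (Q\setminus S)$ is a closure operator that preserves the condition $\langle u,\e_{T\setminus S}\rangle\le 0$. By the standard contractibility lemma, $\mu_+(S)=0$ unless no element is forced, that is, unless $S\supseteq Q$ or the poset degenerates. Symmetrically, in the lower poset $T\mapsto T\setminus (P\cap S)$ gives a cone point, so $\mu_-(S)$ vanishes unless $S\cap P$ is essentially a single element. Because $\langle u,\e_S\rangle=1$, this should force the surviving $S$ to be intervals between a unique ``positive'' element $i$ and the negative set. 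The survivors then assemble into a telescoping alternating sum of ranks of the form $\rank(S)-\rank(S\setminus\{i\})$ over the relevant range of $S$.

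\emph{Finishing, and the main obstacle.} The last step is to evaluate that telescoping sum. When $u=\e_i$, it should reduce to $\rank(\{i\})$, which is $1$ if $i$ is a non-loop and $0$ if $i$ is a loop. For every other $u$ with $\sum u_j=1$, it should reduce to $0$, either because some $u_j\ge 2$ or because negative coordinates make the boolean intervals collapse. This agrees with Proposition \ref{prop-h0-tautological bundle}. I expect the delicate part to be this Möbius-function bookkeeping. One needs the exact conventions for which sets appear in the flags: whether $\emptyset$ and $\G$ count as rays, given that the fan lives in the quotient by $\R\e_\G$, and the resulting sign $(-1)^{n-l(\pi)}$. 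With one naive convention the case $u=\e_i$ seems to produce $0$ rather than $1$. So I would first verify the whole computation on the case $m=2$ or $m=3$ with $u=\e_i$, fix the conventions so that it matches, and only then run the cone-point argument in general.
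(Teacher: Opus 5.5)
Your reduction to $\sum_j u_j=1$ is correct. So is your factorization $c_u(S)=(-1)^m\mu_-(S)\,\mu_+(S)$ (by Hall's theorem, with $l(\pi)=l_-+l_+$). The outline also matches the paper's: show that the coefficient of $\rank(S)$ in Corollary \ref{cor-weighted euler char} is $1$ for $u=\e_i$, $S=\{i\}$, and $0$ otherwise. The paper kills the unwanted terms with explicit sign-reversing involutions on flags. When $u=\e_i$ it toggles $S\setminus\{i\}$. For other $u$ it toggles a singleton step $\{a\}$ with $u_a<0$. Your cone points play exactly this role. Your route has two advantages: it makes it transparent that the moves preserve the weight constraints, and it produces the sign of the surviving term automatically.

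\textbf{Where the plan goes wrong.} The step computing the M\"obius functions is not carried out correctly. The predicted ``telescoping'' sum and the worry about getting $0$ instead of $1$ both stem from that.

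\emph{Lower factor.} Your lower cone point proves much more than you state. Suppose $S\setminus P\neq\emptyset$. Then $S\setminus P$ lies in the open interval $(\emptyset,S)$: it has weight $\le 0$, and it is proper because $\langle u,\e_S\rangle=1$ forces $S\cap P\neq\emptyset$. Every $T$ in that interval has $T\setminus P\neq\emptyset$, since a nonempty subset of $P$ has positive weight. So $T\supseteq T\setminus P\subseteq S\setminus P$ contracts the order complex, and $\mu_-(S)=0$. Hence $\mu_-(S)\neq 0$ forces $S\subseteq P$. Together with $\langle u,\e_S\rangle=1$ and integrality, this gives $S=\{i\}$ with $u_i=1$, and then $\mu_-=-1$.

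\emph{Upper factor.} Every $T\supseteq S$ in the upper poset has $\langle u,\e_T\rangle\le 1=\langle u,\e_\G\rangle$. So $\G\setminus T$ is never a nonempty subset of $Q$, and $T\mapsto T\cup Q$ maps the open interval $(S,\G)$ into itself. If $Q\not\subseteq S$, this interval has the cone point $S\cup Q$, so $\mu_+(S)=0$. For $S=\{i\}$ with $u_i=1$, this forces $Q=\emptyset$, that is $u=\e_i$. The upper poset is then the full Boolean lattice above $\{i\}$, and $\mu_+=(-1)^{m-1}$.

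\emph{Conclusion.} There are no surviving ``intervals'' of sets and no telescoping sum of rank differences. Exactly one coefficient survives, namely $c_{\e_i}(\{i\})=(-1)^m\cdot(-1)\cdot(-1)^{m-1}=1$. This gives $\chi(X_m,\E_\M)_{\e_i}=\rank(\{i\})$ and $\chi(X_m,\E_\M)_u=0$ for every other $u$, matching Proposition \ref{prop-h0-tautological bundle}.

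\emph{Sign convention.} Your convention worry is settled by computing codimensions directly. The cone of the flag $\emptyset=S_0\subsetneq S_1\subsetneq\cdots\subsetneq S_l=\G$ is spanned modulo $\R\e_\G$ by $\e_{S_1},\dots,\e_{S_{l-1}}$. It therefore has codimension $m-l$ in the $(m-1)$-dimensional space. So the sign is $(-1)^{m-l(\pi)}$, with $l(\pi)$ counting the steps of the flag, and with this convention the surviving coefficient is $+1$, as computed above. You do not need small-case experiments to fix it.
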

\begin{proof}
We prove the identity by showing the coefficient of $\rank(S)$ in Corollary \ref{cor-weighted euler char} is $\rank(i)$ if $u=\e_i$ and $S =\{i\}$ and $0$ otherwise.

Fix $u = \e_i$ and let $S=\{i\}$, then $\Pi_u(S)\simeq \Pi^{m-1}$. By Lemma $\ref{lemma-mobius function of poset of flags}$,
$$\sum_{\pi \in \Pi_u(S)} (-1)^{m-l(\pi)} = \sum_{\pi'\in \Pi^{m-1}}(-1)^{m-1-l(\pi')} =1.$$ 
Let $u=\e_i$, and $ S \neq \{i\}$ with $\langle u,\e_S\rangle=1$, for $\pi \in \Pi_u(S),$ define an involution $\pi^*$ by removing or adding $S\setminus\{i\}$ in the flag, $\pi$. Then, the alternating sum $\displaystyle\sum_{\pi\in \Pi_u(S)} (-1)^{m-l(\pi)}=0$.

Similarly, for other $u$ and $S\subseteq \G$, we define an involution, $\pi^*$ to complete the proof. For a fixed $u\in \Z^m$ with $\sum_j u_j=1$, let $a \in \G$ be  a coordinate such that $u_a<0.$ Let $\pi \in \Pi_u(S)$ with  $\pi: S_1\subsetneq \ldots \subsetneq S \subsetneq \ldots \subsetneq \G.$ Define $\pi^*$ as follows: for $k$ with $a \in S_k\setminus S_{k-1},$
$$\text{if }|S_{k-1}\setminus S_k|>1,\,\text{replace } S_k \,\text{with } S_{k-1}\cup \{a\} \subsetneq S_k, \,\text{and }$$
$$\text{if }|S_{k-1}\setminus S_k|=\{a\},\,\text{replace } S_k \subsetneq S_{k+1} \,\text{with } S_{k+1}.$$
\end{proof} 

\begin{remark}
The vanishing of higher cohomologies for the tautological bundle is consistent with result in the representable case. 
In particular, this agrees with a main theorem of \cite{Eur} for tautological bundles of representable matroids. \end{remark}

\begin{example}[Uniform matroid $U_{2,3}$]
We compute $h^0(X_{m},\E_\M)$ and $\chi(X_m,\E_\M)$ for a tropical vector bundle, $\E$ that comes from the uniform matroid $\M = U_{2,3}$ on $\G=\{1,2,3\}$. 
Let $\widetilde{\Sigma_3}\subset \R^3$ be the lifted permutahedral fan. The rays are indexed by the nonempty proper subsets
$S\subset E$, with primitive generators $e_S=\sum_{i\in S} e_i$.
The Klyachko data of the tautological bundle $\E_{U_{2,3}}$ is given by the diagram
$$
\begin{array}{c|ccc}
S & 1 & 2 & 3 \\ \hline
\{1\}   & 1 & 0 & 0 \\
\{2\}   & 0 & 1 & 0 \\
\{3\}   & 0 & 0 & 1 \\
\{1,2\} & 1 & 1 & 0 \\
\{1,3\} & 1 & 0 & 1 \\
\{2,3\} & 0 & 1 & 1
\end{array}
$$
The parliament of polytopes is determined by the columns of the diagram on the affine hyperplane $\sum_{j}u_j=1$. Therefore, $P_i= \{e_i\}.$ for all $1\leq i \leq3$. It follows that for all $u\in M$ with $\sum_{j}u_j=1$,
$$h^0(X_3,\E_{U_{2,3}})_u =\begin{cases}
1 & \text{if } u\in\{e_1,e_2,e_3\},\\
0 & \text{otherwise.}
\end{cases}$$
For $u=e_1$, maximal cones correspond to orders $(a,b,c)$ of $\{1,2,3\}$ with flag $\{a\}\subset\{a,b\}\subset E$, and
$h^0(U_\sigma,\E_{U_{2,3}})_{e_1}=1$ if $a=1$ and $2$ otherwise, giving total $10$ over the maximal cones. 
Rays correspond to nonempty proper $S\subset E$, and $h^0(U_\rho,\E_{U_{2,3}})_{e_1}=1$ for $S=\{1\}$ and $2$ for the other rays, giving total $11$ over the rays. For the zero cone the value is $2$.
Hence,
$$\chi(X_{3},\E_{U_{2,3}})_{e_1}=10-11+2=1,$$ and by symmetry $\chi(X_{3},\E_{U_{2,3}})_{e_i}=1$ for $i=1,2,3$. For other $u$ with $\sum_{j}u_j=1,$ the involution on flags used in the proof of Theorem \ref{thm-vasnishing of higher coh-taut} cancels the alternating sum, hence $\chi(X_{3},\E_{U_{2,3}})_u=0.$

\end{example}



\end{document}